\newtheorem{thm}{Theorem}[section]
\newtheorem{lem}[thm]{Lemma}
\newtheorem{prop}[thm]{Proposition}
\newtheorem{cor}[thm]{Corollary}
\newtheorem{remark}[thm]{Remark}
\newtheorem{dfn}[thm]{Definition}
\newtheorem{conjecture}[thm]{Conjecture}
\definecolor{grey}{gray}{0.41}
\newcommand{\mc}{\mathcal{C}}
\newcommand{\mg}{\mathcal{G}}
\newcommand{\mh}{\mathcal{H}}
\newcommand{\mP}{\mathcal{P}}
\newcommand{\mr}{\mathcal{R}}
\newcommand{\zz}{{\mathbb Z}}
\newcommand{\none}{n-1}
\newcommand{\HL}{\widetilde H_{\mu}(X;0,t)}
\newcommand{\HLdelta}{\widetilde H_{\delta}(X;0,t)}
\newcommand{\Des}{R_{\gamma,\delta}(X)}
\newcommand{\Macdelta}{\widetilde H_{\delta}(X;q,t)}
\newcommand{\MacStraight}{\widetilde H_{\mu}(X;q,t)}
\newcommand{\bnu}{{\boldsymbol{\nu}}}
\DeclareMathOperator{\rw}{rw}
\DeclareMathOperator{\Yam}{Yam}
\DeclareMathOperator{\SYam}{SYam}
\DeclareMathOperator{\Descent}{Des}
\DeclareMathOperator{\inv}{inv}
\DeclareMathOperator{\maj}{maj}
\DeclareMathOperator{\st}{st}
\DeclareMathOperator{\unst}{unst}
\DeclareMathOperator{\SYT}{SYT}
\DeclareMathOperator{\SF}{SF}
\numberwithin{equation}{section}
\begin{document}

\title{On the Schur expansion of Hall-Littlewood and related polynomials via Yamanouchi words}           

\author{Austin Roberts\thanks{Partially supported by DMS-1101017 from the NSF.}
\\ 
Department of Mathematics\\ Highline College\\
 Des Moines, WA 98198, USA\\
 }

\date{\today}

\maketitle


\begin{abstract}     
 This paper uses the theory of dual equivalence graphs to give explicit Schur expansions for several families of symmetric functions. We begin by giving a combinatorial definition of the modified Macdonald polynomials and modified Hall-Littlewood polynomials indexed by any diagram $\delta \subset {\mathbb Z} \times {\mathbb Z}$, written as $\Macdelta$ and $\HLdelta$, respectively. We then give an explicit Schur expansion of $\HLdelta$ as a sum over a subset of the Yamanouchi words, as opposed to the expansion using the charge statistic given in 1978 by Lascoux and Sch\"{u}ztenberger. We further define the symmetric function $\Des$ as a refinement of $\HLdelta$ and similarly describe its Schur expansion. We then analyze $\Des$ to determine the leading term of its Schur expansion. We also provide a conjecture towards the Schur expansion of $\Macdelta$. To gain these results, we use a construction from the 2007 work of Sami Assaf to associate each Macdonald polynomial with a  signed colored graph $\mh_\delta$. In the case where a subgraph of $\mh_\delta$ is a dual equivalence graph, we provide the Schur expansion of its associated symmetric function, yielding several corollaries. 
 \end{abstract}

\pagebreak




\section{Introduction}\label{intro} 

Adriano Garsia posed the question: when can the modified Hall-Littlewood polynomial $\HL$ be expanded in terms of Schur functions as a specific sum over Yamanouchi words, and is there a natural way to determine the expansion when it is not? At the time, he had already realized that  the expansion he had in mind does not apply when $\HL$ is indexed by the partition shape $\mu=(3,3,3)$. The results of this paper are in direct response to Garsia's question. In fact, the results we found proved to be more general than the question as originally posed.

In this paper, we will concentrate on three main families of polynomials, the modified Macdonald polynomials $\MacStraight$, the modified Hall-Littlewood polynomials $\HL$, and a refinement of the modified Hall-Littlewood polynomials, which we denote $\Des$. 
The Macdonald polynomials were introduced in \citep{Mac} and are often defined as the set of $q,t$-symmetric functions that satisfy certain orthogonality and triangularity conditions. 
The modified Macdonald polynomials were shown to be Schur positive by Mark Haiman via representation-theoretic and geometric means in \citep{Hai01}. In line with recent combinatorial work in \citep{HHL} and \citep{assaf2015dual}, we choose to work with the modified Macdonald polynomials, though it is relatively straightforward to transition to other forms of Macdonald polynomials.
%
%
%
Macdonald polynomials also specialize to several well-known symmetric functions, including Hall-Littlewood polynomials and Jack polynomials. While combinatorial descriptions of the Schur expansions of specific families of Macdonald polynomials can be found in \citep{HHL} (which drew on the earlier work in \citep{CL} and \citep{vanL}), \citep{Fishel}, \citep{Zab}, \citep{ZabrockiSpecialCase}, \citep{LM}, 
and \citep{DEG+LLT}, an explicit combinatorial description remains elusive outside of special cases. For details on the combinatorics of Macdonald polynomials, see \citep{Haglund}.

As just noted, Macdonald polynomials specialize to Hall-Littlewood polynomials, specifically by letting $q=0$. Hall-Littlewood polynomials, in turn, specialize to the Schur functions, Schur Q-functions, and the monomial symmetric functions. They were first studied by Philip Hall in relation to the Hall algebra in \citep{hall1957algebra} and later by D.E. Littlewood in \citep{littlewood1961certain}. It should be noted that the earliest known work on Hall-Littlewood polynomials actually dates back to the lectures of Ernst Steinitz in \citep{steinitz1901theorie}. 
 Hall-Littlewood polynomials have proven to be a rich mathematical topic, with recent combinatorial work including (but certainly not limited to) \citep{nakayashiki1997kostka}, \citep{Carbonara:1998:CII:301719.301728}, \citep{dalal2012abc},and \citep{Loehr:2013:TMS:2532882.2533254}. Expanding modified Hall-Littlewood polynomials into Schur functions can be achieved via the charge statistic, as found in \citep{lascoux1978conjecture}, though we will present a new expansion  in this paper as a sum over a subset of the Yamanouchi words. There is currently no simple method of deriving one expansion from the other --- though the discovery of one would certainly be of interest. This new result is promising both because it allows for a greater use of the known machinery of Yamanouchi words to be applied to Hall-Littlewood polynomials, as in Part 1 of Remark~\ref{yam tree} and Remark~\ref{first 3 rows}, and because it generalizes the usual setting of partition shapes to the more general $\delta \subset \zz \times \zz$. Equivalently, our result gives a new combinatorial rule for the coefficients of the Kostka-Foulkes polynomial in one variable $t$. For more background on the topic of Hall-Littlewood polynomials, see \citep{Macdonald}.

We use the statistics defined in \citep{HHL} to generalize the definition of the modified Macdonald polynomials $\MacStraight$ and the modified Hall-Littlewood polynomials $\HL$ to any diagram $\delta \subset \zz\times \zz$, giving the functions $\Macdelta$ and $\HLdelta$. We may then write $\HLdelta$ in terms of the refinement polynomials $\Des$, defined via row reading words of fillings of $\delta$ with a fixed descent set $\gamma\subset \delta$. We will discuss these polynomials in the general context of diagrams, rather than the more traditional setting of partitions. We may then write the main theorem of this paper as follows.

\begin{thm}\label{Hall Yam classification}
If $\gamma$ and $\delta$ are any diagrams such that $\gamma \subset \delta$, then
\begin{align}\label{HL Yam equation}
\HLdelta = \sum_{\lambda \vdash |\delta|} \;
\sum_{{w \in \Yam_{\delta}(\lambda)}\atop \inv_\delta(w)=0 }t^{\maj_\delta(w)} s_\lambda,&&
\Des =&\sum_{\lambda\vdash |\delta|}
 \sum_{{w \in \Yam_{\delta}(\lambda)}\atop {\inv_\delta(w)=0\atop \Descent_\delta(w)=\gamma }} s_\lambda.
\end{align}
\end{thm}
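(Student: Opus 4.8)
The plan is to obtain both identities as consequences of the dual equivalence machinery attached to the signed colored graph $\mh_\delta$. Beginning from the Haglund--Haiman--Loehr formula extended to diagrams $\delta \subset \zz \times \zz$, the specialization $q=0$ annihilates every filling of positive inversion number, so that
\[
\HLdelta \;=\; \sum_{\inv_\delta(\sigma)=0} t^{\maj_\delta(\sigma)}\, x^{\sigma},
\]
the sum running over fillings $\sigma$ of $\delta$. Since $\maj_\delta(\sigma)$ is a function of the descent-cell set $\Descent_\delta(\sigma)$ alone, grouping the surviving fillings according to $\Descent_\delta(\sigma)=\gamma$ gives $\HLdelta = \sum_{\gamma\subset\delta} t^{\maj_\delta(\gamma)}\,\Des$, where $\maj_\delta(\gamma)$ denotes this common value. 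It therefore suffices to establish the expansion of $\Des$ and to sum it over $\gamma$. After standardizing and passing to the fundamental quasisymmetric expansion, I would identify $\Des$ with the symmetric function carried by those vertices of $\mh_\delta$ satisfying simultaneously $\inv_\delta=0$ and $\Descent_\delta=\gamma$.

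The central step is to show that this vertex set, equipped with the Assaf involutions $\tilde d_i$, is a dual equivalence graph. This I would split into two claims: (i) each $\tilde d_i$ preserves both $\inv_\delta$ and the descent-cell set $\Descent_\delta$, so that the locus $\{\inv_\delta=0,\ \Descent_\delta=\gamma\}$ is a union of connected components of $\mh_\delta$; and (ii) every such component satisfies Assaf's local axioms and is thus isomorphic to the standard dual equivalence graph $\mg_\lambda$ on $\SYT(\lambda)$ for a unique partition $\lambda$. Granting these, Assaf's structure theorem --- a dual equivalence graph has associated symmetric function equal to the sum of $s_\lambda$ over its components, one $s_\lambda$ per component isomorphic to $\mg_\lambda$ --- immediately yields $\Des = \sum_\lambda c_\lambda\, s_\lambda$, with $c_\lambda$ the number of components of the locus isomorphic to $\mg_\lambda$.

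It remains to match $c_\lambda$ with the Yamanouchi count. Each component isomorphic to $\mg_\lambda$ contains exactly one vertex whose reading word is Yamanouchi, namely the highest-weight vertex singled out by the isomorphism, and conversely every such Yamanouchi vertex determines a component isomorphic to $\mg_\lambda$. As the entire component lies inside the locus, this distinguished vertex $w$ automatically satisfies $\inv_\delta(w)=0$ and $\Descent_\delta(w)=\gamma$, so $c_\lambda = \#\{\, w\in\Yam_\delta(\lambda) : \inv_\delta(w)=0,\ \Descent_\delta(w)=\gamma \,\}$. This is exactly the inner sum in the formula for $\Des$. Substituting back into $\HLdelta = \sum_{\gamma\subset\delta} t^{\maj_\delta(\gamma)}\,\Des$ and using $\maj_\delta(w)=\maj_\delta(\gamma)$ for every $w$ in the locus recovers the first identity.

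I expect claims (i) and (ii) to be the real obstacle. The delicate point is not that $\tilde d_i$ preserves the statistic $\maj_\delta$ --- which is built into Assaf's construction --- but that it preserves the finer descent-cell set $\Descent_\delta$ exactly; this invariance is precisely what guarantees that the refined object $\Des$ is symmetric and that fixing $\gamma$ cuts out a union of whole components rather than slicing through them. Verifying it will require a careful analysis of how $\tilde d_i$ redistributes the entries $i-1,\,i,\,i+1$ among the cells of $\delta$ under the constraint $\inv_\delta=0$, checking in each configuration that no cell changes its descent status; confirming Assaf's hexagon and commutation axioms on the resulting subgraph is then the remaining technical verification.
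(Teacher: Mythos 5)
Your overall architecture matches the paper's: reduce to the signed colored graph $\mh_\delta$, observe that $\{\inv_\delta=0,\ \Descent_\delta=\gamma\}$ is a union of components (the paper cites Assaf for the fact that $D_i^\delta$ preserves $\inv_\delta$ and $\Descent_\delta$, so the invariance you flag as the ``delicate point'' is actually the easy, known part), show each such component is a dual equivalence graph, and then count components by locating a distinguished vertex in each. One small correction along the way: the edges are given by $D_i^\delta$, which acts as $\tilde d_i$ only when $i-1,i,i+1$ lie in a pistol of $T_\delta(\pi)$ and as the ordinary $d_i$ otherwise; the paper's proof that the components with $\inv_\delta=0$ are dual equivalence graphs goes through a pattern-avoidance classification (its Lemma on $\delta$-strict patterns $1342$, $2431$, $12543$, $34521$) together with an argument that any occurrence of a forbidden pistoled pattern forces $\inv_\delta\geq 1$.

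The genuine gap is the final counting step. You assert that each component isomorphic to $\mg_\lambda$ ``contains exactly one vertex whose reading word is Yamanouchi, namely the highest-weight vertex singled out by the isomorphism.'' An abstract isomorphism $\phi\colon\mg\to\mg_\lambda$ preserves edges and signatures but gives you no control over the underlying permutation at $\phi^{-1}(U_\lambda)$; because some edges of $\mh_\delta$ act by $\tilde d_i$ rather than $d_i$, a component isomorphic to $\mg_\lambda$ can contain no standardized Yamanouchi word at all, or more than one. This is precisely Garsia's $(3,3,3)$ counterexample, and it is why the theorem is stated over $\Yam_\delta(\lambda)$ (the non-jamming Yamanouchi words) rather than $\Yam(\lambda)$. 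You write the correct set $\Yam_\delta(\lambda)$ in your formula for $c_\lambda$, but your argument never engages with the jamming condition: you must prove that each component isomorphic to $\mg_\lambda$ contains exactly one element of $\SYam_\delta(\lambda)$ and no element of $\SYam_\delta(\mu)$ for $\mu\neq\lambda$. That statement is the paper's Theorem~\ref{yam decomposition of graphs}, and its proof is the technical heart of the paper: a chain of lemmas that move $n-1$ along explicit edge paths, an induction on two-row shapes transported across an isomorphism of components of $\mh_\gamma$ and $\mh_\delta$, and a restriction argument reducing general $\lambda$ to the two-row case. Without an argument of this kind, the identification of $c_\lambda$ with the non-jamming Yamanouchi count is unsupported.
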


\noindent Here, $\Yam_\delta(\lambda)$ is the subset of the Yamanouchi words with content $\lambda$ whose elements, when thought of as row reading words of a filling of the diagram $\delta$, never have the $j^{th}$ from last $i$ in the same pistol of $\delta$ as the $j+1^{th}$ from last $i+1$. The above definitions and notation will be given a more thorough treatment in Section~\ref{Preliminaries}.


The main tool used in the proof of Theorem~\ref{Hall Yam classification} is the theory of dual equivalence graphs. Dual equivalence has its roots in the work of Sch\"{u}tzenberger in \citep{schutzenberger579correspondance},  Mark Haiman in \citep{Haiman}, and Donald Knuth in \citep{Knuth}. Sami Assaf introduced the theory of dual equivalence graphs in her Ph.D. dissertation \citep{Assaf07} and subsequent paper \citep{assaf2015dual}. The theory was further advanced by the author in \citep{DEG+LLT}, from which we will derive the definition of dual equivalence graph used in this paper.  In these papers, a dual equivalence graph is associated with a symmetric function so that each component of the graph corresponds to a single Schur function. Thus, the Schur expansion of said symmetric function is described by a sum over the set of components of the graph. Specifically, each component of a dual equivalence graph is isomorphic to a unique standard dual equivalence graph $\mg_\lambda$, which in turn corresponds to the Schur function $s_\lambda$. Variations of dual equivalence graphs have also been given for $k$-Schur functions in \citep{AB}, for the product of a Schubert polynomial with a Schur polynomial in \citep{ABS}, and for shifted tableaux in relation to the type $B$ Lie group in \citep{Billey2014Coxeter} and \citep{assaf2014shifted}. Dual equivalence graphs were also connected to Kazhdan-Lusztig polynomials and $W$-graphs by Michael Chmutov in \citep{Chmutov}.

This paper will focus on dual equivalence graphs that emerge as components of a larger family of graphs. The involution $D_i^{\delta}\colon S_n \to S_n$ was first introduced in \citep{Assaf07} and can be used 
 to define the edge sets of a signed colored graph $\mh_{\delta}$ with vertex set $S_n$ and vertices labeled by the signature function $\sigma$, which is defined via the inverse descent sets of permutations. We may then associate $\HL$ and $\Des$ with subgraphs of $\mh_\delta$. We show that these two subgraphs are dual equivalence graphs in Theorem~\ref{Halls are DEGs}. The main contribution of this paper to the theory of dual equivalence graphs can then be stated in the following theorem.

\begin{thm}\label{yam decomposition of graphs}
Let $\delta$ be a diagram of size $n$, and let $\mg=(V, \sigma, E)$ be a dual equivalence graph such that $\mg$ is a component of $\mh_{\delta}$ and $\mg\cong\mg_\lambda$. Then there is a unique vertex of $V$ in $\SYam_\delta(\lambda)$, and $V\cap\SYam_\delta(\mu)=\emptyset$ for all $\mu\neq \lambda$.
\end{thm}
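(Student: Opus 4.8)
The plan is to prove the statement by transporting it through a signed colored graph isomorphism $\phi\colon\mg_\lambda\to\mg$ and reducing membership in $\SYam_\delta(\cdot)$ to two pieces of data attached to each vertex $w\in S_n$: its signature $\sigma(w)$, recording the inverse descent set $\Descent(w^{-1})$, and the shape of its RSK insertion tableau $P(w)$. First I would record the combinatorial translation of the Yamanouchi condition. Writing $B_\nu=\{\nu_1,\nu_1+\nu_2,\dots,\nu_1+\cdots+\nu_{\ell-1}\}$ for a partition $\nu$ with $\ell$ parts, any $w\in\SYam_\delta(\nu)$ is the standardization of a Yamanouchi word $u$ of content $\nu$, and in the paper's convention such a $u$ satisfies $P(u)=U_\nu$, the unique semistandard tableau of shape and content $\nu$. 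Because standardization commutes with insertion, this forces $P(w)=T_\nu$, the superstandard (standardized) tableau of shape $\nu$, and in particular $\Descent(w^{-1})=\Descent(P(w))=\Descent(T_\nu)=B_\nu$. Thus every vertex of $\mh_\delta$ lying in $\SYam_\delta(\nu)$ carries the single prescribed signature $\sigma$ encoded by $B_\nu$ and has insertion shape exactly $\nu$.

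Next I would pin down the candidate vertex using signatures alone. Among standard Young tableaux of shape $\lambda$, the superstandard tableau $T_\lambda$ is the unique one with descent set $B_\lambda$: this follows from the identity that the number of elements of $\SYT(\lambda)$ with descent set contained in $B_\lambda$ equals the Kostka number $K_{\lambda\lambda}=1$. Since $\phi$ preserves signatures, the multiset of signatures on $V$ equals the multiset $\{\Descent(T):T\in\SYT(\lambda)\}$, so exactly one vertex $v^\ast\in V$ carries the signature encoded by $B_\lambda$. By the previous paragraph, $V\cap\SYam_\delta(\lambda)\subseteq\{v^\ast\}$, which gives the uniqueness half of the theorem, and any vertex of $V$ lying in some $\SYam_\delta(\mu)$ must have insertion shape $\mu$.

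The crux is to control the insertion shapes across the component. I would establish that the edge involutions $D_i^\delta$ act on a dual equivalence component as genuine elementary dual equivalences, and that elementary dual equivalence preserves the common shape of the RSK tableaux; consequently $P(w)$ has a single fixed shape $\nu$ for all $w\in V$. Since $\mg\cong\mg_\lambda$, the signatures on $V$ also coincide with the descent signatures of $\SYT(\nu)$, and the resulting equality $s_\nu=s_\lambda$ forces $\nu=\lambda$. This rules out $\mu\neq\lambda$: a vertex in $\SYam_\delta(\mu)$ would have insertion shape $\mu\neq\lambda$, contradicting constancy. For existence, $v^\ast$ has insertion shape $\lambda$ and $\Descent((v^\ast)^{-1})=B_\lambda$, so $\Descent(P(v^\ast))=B_\lambda$ with $P(v^\ast)$ of shape $\lambda$; by the uniqueness of $T_\lambda$ this gives $P(v^\ast)=T_\lambda$, whence the destandardization of $v^\ast$ is Yamanouchi of content $\lambda$. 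It remains to verify that $v^\ast$ satisfies the pistol condition and $\inv_\delta(v^\ast)=0$ built into $\SYam_\delta$, which I expect to follow from $v^\ast$ lying in the specified subgraph of $\mh_\delta$ together with the definitions in Section~\ref{Preliminaries}. I expect the main obstacle to be precisely the shape-constancy step: justifying that $D_i^\delta$ behaves as a shape-preserving dual equivalence on dual equivalence components, since the raw involution on all of $S_n$ need not preserve RSK shape, and the signature data alone cannot separate $\lambda$ from a partition $\mu$ whose boundary set $B_\mu$ happens to occur as the descent set of some tableau of shape $\lambda$.
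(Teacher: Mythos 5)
Your signature argument does correctly deliver the ``at most one'' half: every $w\in\SYam(\lambda)$ has $P(w)=U_\lambda$, hence carries the signature determined by $B_\lambda$, and since $K_{\lambda\lambda}=1$ a component isomorphic to $\mg_\lambda$ has exactly one vertex with that signature, so $|V\cap\SYam_\delta(\lambda)|\le 1$. But the step you yourself flag as the crux --- that on a dual equivalence component the involutions $D_i^\delta$ act as genuine $d_i$'s and hence the RSK shape is constant on $V$ --- is false on both counts, and the theorem cannot be reached this way. For the first count: take $\delta$ a single row of three cells, so $\{1,2,3\}$ is a pistol and $D_2^\delta=\tilde d_2$ everywhere; the component $\{213,132\}$, with signatures $-+$ and $+-$, is isomorphic to $\mg_{(2,1)}$ and is a dual equivalence graph, yet its unique edge is a $\tilde d_2$-edge and not a $d_2$-edge (indeed $d_2(213)=312\ne 132$). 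For the second, fatal, count: let $\mu=(3,3,3)$ and let $w=323211111$ be the Yamanouchi word of content $(5,2,2)$ from Figure~\ref{unpaired Yam}; it satisfies $\inv_\mu(w)=0$ and jams $\mu$, so $\st(w)$ is a vertex of $\mP_\mu$, every component of which is a dual equivalence graph by Theorem~\ref{Halls are DEGs}. Its component is isomorphic to some $\mg_\nu$ with $\nu\neq(5,2,2)$ (otherwise the component would contain both $\st(w)$ and the non-jamming vertex of $\SYam_\mu((5,2,2))$ guaranteed by the theorem, two distinct vertices with the signature of $U_{(5,2,2)}$, contradicting $K_{\lambda\lambda}=1$), and it therefore also contains a vertex of insertion shape $\nu$. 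So a single dual equivalence component of $\mh_\mu$ contains vertices of insertion shapes $(5,2,2)$ and $\nu\ne(5,2,2)$. This is exactly the $(3,3,3)$ phenomenon that motivates the paper and Corollary~\ref{Hall Yam cor classification}: if shape constancy held, your argument would prove the naive Yamanouchi expansion for $\HL$ with $\mu=(3,3,3)$, which that corollary refutes.

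As a result, both remaining assertions of the theorem are unproved in your proposal. The vanishing $V\cap\SYam_\delta(\mu)=\emptyset$ for $\mu\ne\lambda$ cannot follow from shape considerations, because $V$ genuinely can contain elements of $\SYam(\mu)$ for $\mu\neq\lambda$ --- they are merely forced to jam $\delta$, and your argument never engages with the jamming condition for those shapes. And for existence, showing that the distinguished vertex $v^\ast$ does \emph{not} jam $\delta$ is the other hard half; it does not follow from $v^\ast$ lying in any particular subgraph (note also that $\inv_\delta=0$ is not part of the definition of $\SYam_\delta$, and the theorem concerns arbitrary components of $\mh_\delta$, not only those in $\mP_\delta$). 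The paper's proof avoids shape constancy entirely: Lemma~\ref{yam to yam} shows that any isomorphism between components of Assaf graphs carries non-jamming standardized Yamanouchi words to non-jamming standardized Yamanouchi words, via an induction on two-row shapes that explicitly constructs paths of edges in the graph and analyzes which edges must be $d_i$-edges and which double edges can occur (Lemmas~\ref{move n-1} and~\ref{yam to yam induction}), followed by a restriction of general shapes to consecutive pairs of rows; the theorem is then obtained by comparing $\mg$ with the realization of $\mg_\lambda$ as a component of $\mh_\gamma$ for $\gamma$ a single column. Some argument of that kind, working directly with the pistols of $\delta$, appears unavoidable.
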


\noindent Here, $\SYam_\delta(\lambda)$ is the set of permutations resulting from standardizing the words in $\Yam_\delta(\lambda)$.

This paper is organized as follows. We begin with the necessary material from the literature in Section~\ref{Preliminaries}, discussing tableaux, symmetric functions, and dual equivalence graphs. In Section~\ref{DEGs of tau}, we give a classification of which connected components of $\mh_{\delta}$ are dual equivalence graphs in Lemma~\ref{DEG by pattern}, and use this to prove that the signed colored graphs associated with $\Des$ and $\HLdelta$ are dual equivalence graphs. We then prove Theorem~\ref{yam decomposition of graphs}, followed by Theorem~\ref{Hall Yam classification}, as well as some related results. Next, Conjecture~\ref{f tau} gives a possible direction towards the Schur expansion of Macdonald polynomials. 
 Section~\ref{Applications to Symmetric Functions} is dedicated to further analysis of $\MacStraight$ and $\HL$. After classifying when $\MacStraight$ and $\HL$ expand via Yamanouchi words in Corollary~\ref{Hall Yam cor classification} and Proposition~\ref{Mac Yam equation}, we then end by classifying when $\Des=0$ in Proposition~\ref{Des zero} and giving a description of the leading term in the Schur expansion of $\Des$ in Proposition~\ref{Des leading term}. 



\section{Preliminaries}\label{Preliminaries}

\subsection{Tableaux and Permutations}\label{Tableaux}
By a \emph{diagram} $\delta$, we mean a finite subset of $\zz \times \zz$. We let $|\delta|$ denote the size of this subset. By reflecting a diagram $\delta$ over the line $x=y$ in the Cartesian plane, we may obtain the \emph{conjugate} diagram, denoted $\delta^\prime$. A \emph{partition} $\lambda$ is a weakly decreasing finite  sequence of nonnegative integers $\lambda_1 \geq \ldots \geq \lambda_k  \geq 0$. We write $|\lambda|=n$ or $\lambda\vdash n$ if $\sum \lambda_i = n$. We will give the diagram of a partition in French notation by drawing left justified rows of boxes, where $\lambda_i$ is the number of boxes in the $i^{th}$ row, from bottom to top, with bottom left cell at the origin, as in the left diagram of Figure \ref{partitions}. Any diagram that arises from a partition in this fashion is said to have \emph{partition shape}. Notice that conjugating a partition shape provides another partition shape.

\begin{figure}[h]
   \begin{center}
   \ytableausetup{smalltableaux}
    \ytableaushort{{}{},{}{},{}{}{},{}{}{}{}} \hspace{.4in}
   \ytableaushort{{}{}\none{}{},\none\none{},{}\none\none\none{},\none{}{}{}} 
   \end{center}
  \caption{The diagrams for (4,3,2,2) and an arbitrary $\delta\subset \zz\times\zz$.}
 \label{partitions}
\end{figure}

A \emph{filling} is a function that takes each cell of a diagram $\delta$ to a positive integer. We express a filling visually by writing the value assigned to a cell inside of the cell. A \emph{standard} filling uses each value in some $[n]=\{1,\ldots,n\}$ exactly once. Here, we say that $T$ is a standard filling of $\delta$, and we define $\SF(\delta)$ as the set of standard fillings of $\delta$. 
A \emph{standard Young tableau} is a standard filling in which all values are required to be increasing up columns and across rows from left to right. The set of all standard Young tableaux on diagrams of partition shape $\lambda$ is denoted by SYT($\lambda$), and the union of $\SYT(\lambda)$ over all $\lambda\vdash n$ is denoted $\SYT(n)$. 
For more information, see \cite[Part I]{Fulton}, \cite[Ch. 3]{Sagan}, or \cite[Ch. 7]{Stanley2}.

Define the \emph{row reading word} of a filling $T$, denoted $\rw(T)$, by reading across rows from left to right, starting with the top row and working down, as in Figure~\ref{reading word}. The row reading word of a standard filling is necessarily a permutation. For ease of reading, we will use $\pi$ for permutations and use $w$ in the more general context of words. Given a word $w$ and diagram $\delta$, both of size $n$, we let $T_\delta(w)$ denote the filling of diagram $\delta$ with reading word $w$.

\begin{figure}[h]
   \begin{center} 
   \ytableausetup{smalltableaux}
\ytableaushort{48,369,1257}
\hspace{1in}
\begin{ytableau}
\none& 3  \\
2 \\
\none& 1 &4
\end{ytableau}
   \end{center}
  \caption{On the left, a standard Young tableau with row reading word 483691257. On the right, a standard filling with row reading word 3214.}
 \label{reading word}
\end{figure}

By a \emph{pistol} of a diagram $\delta$, we mean a set of cells, in row reading order, between some cell $c$ and the position one row below $c$, inclusive. By a pistol of a filling $T$, we mean a pistol of the diagram of $T$. If $|\delta|=n$, we may associate each cell with a number in $[n]$ in row reading order. In turn, this associates each pistol of $\delta$ with an interval $I\subset [n]$. In this case, we say that $I$ is a pistol of $\delta$ and any collection of numbers in $I$ is said to be \emph{$\delta$-pistoled}, as shown in Figure~\ref{pistols}. 
In particular, given a word $w$ of length $|\delta|$, any collection of indices corresponding to cells in a pistol of $T_\delta(w)$ is 
are $\delta$-pistoled. As an example, using the standard filling on the right of Figure~\ref{pistols}, $w=3214$ and the sets $\{1, 2\}$ and $\{3, 4\}$ are $\delta$-pistoled. Equivalently, we could say that the indices of $3$ and $2$ are $\delta$-pistoled, as are the indices of $1$ and $4$.

\begin{figure}[h]
   \begin{center} 
   \ytableausetup{smalltableaux}
\begin{ytableau}
\none&{}&{} \\
\bullet&\bullet&\bullet& \bullet \\
\none&{}&{}& {} \\
{}&{}&{}&\none&{}
\end{ytableau}
\hspace{.5in}
\begin{ytableau}
\none&{}&{} \\
{}&\bullet&\bullet& \bullet \\
\none&\bullet&{}& {} \\
{}&{}&{}&\none&{} 
\end{ytableau}
\hspace{.5in}
\begin{ytableau}
\none&{}{}&{}{} \\
{}&{}&\bullet& \bullet \\
\none&\bullet&\bullet& {} \\
{}&{}&{}&\none&{}
\end{ytableau}
\hspace{.5in}
\begin{ytableau}
\none&{}{}&{}{} \\
{}&{}&{}& \bullet \\
\none&\bullet&\bullet& \bullet \\
{}&{}&{}&\none&{}
\end{ytableau}
   \end{center}
  \caption{Four pistols filled with bullets, corresponding to the four sets $\{3, 4, 5, 6\}, \{4,5,6,7\}, \{5, 6, 7, 8\}, \{6, 7, 8, 9\}\subset [13]$, respectively.  All of these sets are $\delta$-pistoled, as are all of their subsets.}
 \label{content}\label{pistols}
\end{figure}

Given a permutation $\pi$ in one-line notation, the \emph{signature} of $\pi$, denoted $\sigma(\pi)$ is a string of 1's and $-1$'s, or +'s and $-$'s for short, where there is a $+$ in the $i^{th}$ position if and only if $i$ comes before $i+1$ in $\pi$. Notice that a permutation is one entry longer than its signature.  The signature of a standard filling $T$ is defined as $\sigma(T):=\sigma(\rw(T))$. As an example, the signatures of the standard fillings in Figure \ref{reading word} are $+--+-+-+$ and $--+$, respectively.

For $I\subset [n]$ and $\pi \in S_n$, let $\pi|_I$ be the word given by reading the values of $I$ in the order they appear in $\pi$. For any such $I$, $\pi|_I$ is referred to as a \emph{subword} of $\pi$.  Given any standard filling $T$ of size $n$, we let $T|_I$ denote the filling that results from removing all cells of $T$ with values not in $I$, as shown in Figure~\ref{restriction figure}. Given a set of permutations $S\subset S_n$, we let $S|_I=\{\pi|_I\colon \pi \in S\}$.

\begin{figure}[h]
   \ytableausetup{smalltableaux, aligntableaux=center}
\begin{align*}
\pi&=483691257 &T&= \ytableaushort{48,369,1257}\\
\pi|_{\{6, 7, 8, 9\}} &= 8697 &T\big|_{\{6, 7, 8, 9\}} &= \ytableaushort{\none8,\none69,\none\none\none7}
\end{align*}
\caption{At left, a word $\pi$ above $\pi|_I$, where $I=\{6, 7, 8, 9\}$. At right, a standard Young tableau $T$ above $T|_I$ with the same $I$.}\label{restriction figure}
\end{figure}

We may \emph{standardize} a word $w$ of length $n$ by replacing the values in $w$ with the values in $[n]$ as follows. If there are $k$ 1's, replace the 1's in $w$ with the values 1 through $k$ from left to right. Then replace the 2's in $w$ in a similar fashion, replacing the first 2 with the value $k+1$. Repeat this recursively until $w$ has been replaced by a permutation, which we will denote $\st(w)$. Notice $\st(\pi)=\pi$ for all $\pi \in S_n$.

We may \emph{unstandardize} a permutation $\pi$ by replacing each value $i$ with 1 plus the number of $-1$'s in $\sigma(\pi|_{[i-1]})$, resulting in a word that we will denote $\unst(\pi)$. That is, $i$ and $i+1$ are taken to the same value if $i$ occurs before $i+1$. Otherwise, $i+1$ is taken to the value that is one larger than that of $i$. We may unstandardize a word $w$ by letting $\unst(w)=\unst(\st(w))$. Notice that $\st(\unst(\pi))=\pi$ and $\unst(\unst(w))=\unst(w)$. See Figure~\ref{unst} for an example.

\subsection{The Robinson-Schensted-Knuth correspondence}
The Robinson-Schensted-Knuth (R-S-K) correspondence gives a bijection between permutations in $S_n$ and pairs of standard Young tableaux $(P,Q)$, where $P$ and $Q$ have the same shape $\lambda \vdash n$. Here, $P$ is called the \emph{insertion tableau} and $Q$ is called the \emph{recording tableau}.  $P(\pi)$ Let $P\colon S_n\rightarrow \SYT(n)$ be the function taking a permutation to its insertion tableau. Given any $T\in\SYT(n)$, the set of permutations $\pi$ such that $P(\pi)=T$ is called a \emph{Knuth class}. More generally, the R-S-K correspondence can be defined for all words $w$ by considering each occurrence of a value $w_i$ to be less than any occurrence of the value later in the $w$. In this way, the $P$ tableaux is allowed to have repeated entries, though this paper will largely focus on the permutation case. We will assume a solid understanding of the R-S-K correspondence, but the reader can find more details on the many properties of the R-S-K correspondence in \cite{Fulton} or \cite{Sagan}.

One common method of computing $(P(\pi), Q(\pi))$ is to use the ``row bumping algorithm,'' which builds $P(\pi)$ on cell at a time by ``inserting'' the successive values of $\pi$ while recording the order cells are added in $Q(\pi)$.  It follows from the fact that the row bumping algorithm only considers relative order, with the latter of two equal numbers viewed as larger, that standardization preserves recording tableaux. Similarly, standardizing a word has the effect of standardizing its insertion tableaux via its row reading word.  It also follows from the row bumping algorithm, specifically the fact that the cell containing $n$ during the insertion process has no effect on the paths of smaller values, that $P(\pi|_{[n-1]})$ is the result of removing the cell containing $n$ from $P(\pi)$.

For $\lambda \vdash n$, let $U_\lambda$ be the standard Young tableau of shape $\lambda$ given by placing the numbers in $[n]$ in order across the first row of $\lambda$, then across the second row, and so on. Now define $\SYam(\lambda):= \{ \pi\in S_n \colon P(\pi) = U_\lambda \}$, and call this set of permutations the \emph{standardized Yamanouchi words of shape $\lambda$}. 
Let $\Yam(\lambda)$ denote the set of all words $w$ of length $n$ such that there are never more $i+1$'s than $i$'s while reading from right to left with the further requirement that $i$ occurs $\lambda_i$ times in $w$, as demonstrated in Figure~\ref{unst}. Any such word is called a \emph{Yamanouchi word}.  Notice that $\unst(w)=w$ for all $w\in\Yam(\lambda)$. It follows directly from the row bumping algorithm that if $\Yam(\lambda)$ is precisely the set of words $w$ such that $P(w)$ is the tableau with $\lambda_i$ many $i$'s in the $i^{th}$ row. As mentioned above, standardizing $w$ has the effect of standardizing this tableaux via the row reading word, so $\SYam(\lambda)=\{\st(w) \colon w \in \Yam(\lambda)\}$. Similarly, $\Yam(\lambda)=\{ \unst(\pi) \colon \pi \in \SYam(\lambda)\}$.  Finally, if $\pi \in \SYam(\lambda)$, where $\lambda=(\lambda_1, \cdots, \lambda_k)$, then $P(\pi|_{[n-1]})$ is the result of removing $n$ from $U_\lambda$, and so $\pi|_{[n-1]}\in\SYam((\lambda_1, \cdots, \lambda_{k-1},\lambda_k -1))$.

\begin{figure}[h]
\begin{center}
 \begin{minipage}{3in}
 \begin{eqnarray*}
       v &=& 17215 \\ 
\st(v) &=& 15324 \notin \SYam((2,2,1)) \\
 \unst(v)&=& 13212 \notin \Yam((2,2,1))\\
 P(\st(v))&=&\ytableaushort{5,3,124}
\end{eqnarray*} \end{minipage} \quad
 \begin{minipage}{3in}
 \begin{eqnarray*}
       w &=& 38631242 \\ 
\st(w) &=& 48751263 \in \SYam((3,3,1,1)) \\
 \unst(w)&=& 24321121 \in \Yam((3,3,1,1))\\
 P(\st(w))&=&\ytableaushort{8,7,456,123}
\end{eqnarray*} \end{minipage}\end{center}
\caption{Standardizing and unstandardizing the words $v$ and $w$. In the case of $w$, the result is a standardized Yamanouchi word and a Yamanouchi word, respectively, as demonstrated by the insertion tableaux of their standardizations at bottom.} \label{unst}
\end{figure}

The following definitions are referenced in the statement of Theorems~\ref{Hall Yam classification} and \ref{yam decomposition of graphs}.

\begin{dfn}~\label{jammed}
\textup{Let $\delta$ be any diagram, and let $w$ be any Yamanouchi word of length $|\delta|$. We say that $w$ \emph{jams} $\delta$ if there exists some $i$ and some $j$ such that indices of the $j^{th}$ from last $i$ in $w$ and the $j+1^{th}$ from last $i+1$ in $w$ are $\delta$-pistoled. A standardized Yamanouchi word is said to jam $\delta$ if $\unst(\pi)=w$ jams $\delta$. In the context of having such a $w$, we refer to the index of $w$ containing the $j^{th}$ from last $i$ as \emph{jamming} a pistol of $\delta$.}
\end{dfn}

We may then define
\begin{align} \begin{split}
\Yam_\delta(\lambda)&:=\{w\in\Yam(\lambda)\colon w\textup{ does not jam $\delta$}\},\\
\SYam_\delta(\lambda)&:=\{\pi\in\SYam(\lambda)\colon \pi \textup{ does not jam $\delta$}\},
\end{split}
\end{align}

\noindent with examples of each set given in Figure~\ref{jamming}.

\begin{figure}[h]
\ytableausetup{aligntableaux=center}
\begin{eqnarray*}
 \Yam_{(3,3)}(2,2,2)=
 \left\{\rw\biggl(\ytableaushort{321,321}\biggr), \rw\biggl(\ytableaushort{323,121}\biggr)\right\},&
 \rw\biggl(\ytableaushort{33{\bf2},12{\bf1}}\biggr), \rw\biggl(\ytableaushort{{\bf3}23,{\bf2}11}\biggr) \notin \Yam_{(3,3)}(2,2,2)
 \\
 %
\SYam_{(3,3)}(2,2,2)=\left\{\rw\biggl(\ytableaushort{531,642}\biggr), \rw\biggl(\ytableaushort{536,142} \biggr)\right\},&
\rw\biggl(\ytableaushort{56{\bf3},14{\bf2}}\biggr), \rw\biggl(\ytableaushort{{\bf5}36,{\bf4}12}\biggr)\notin\SYam_{(3,3)}(2,2,2)\\
\end{eqnarray*} 
\caption{At left, the sets $\Yam_{(3,3)}(2,2,2)$ and $\SYam_{(3,3)}(2,2,2)$. At right, examples of words in $\Yam(2,2,2)$ and $\SYam(2,2,2)$ that jam $(3,3)$. The bottom row is achieved by standardizing the top row.} \label{jamming}
\end{figure}

\begin{remark}\label{yam tree}
\textup{
\begin{enumerate}
\item One method for listing Yamanouchi words is to  begin with the number 1 and add numbers to the left of it, as in the description of Yamanouchi words above. The further condition that a word not jam $\delta$ simply means that upon adding the $j+1^{th}$ $i+1$, it needs to be checked that this $i+1$ is not in a pistol with the $j^{th}$ $i$. That is, the process of generating Yam$_\delta(\lambda)$ is readily integrated into the procedure for finding Yam$(\lambda)$. 
\item For the reader that prefers permutations, we may describe $\SYam_\delta(\lambda)$ as follows. Consider the result of right justifying $U_\lambda$, and let $A_\lambda$ be the set of pairs of values in cells that are touching on a southeasterly diagonal. For any $\pi\in\SYam(\lambda)$, consider $T_\delta(\pi)$. Then $\pi \in \SYam_\delta(\lambda)$ if and only if no pairs in $A_\lambda$ are in a pistol of $T_\delta(\pi)$. In this way, we have encoded which pairs correspond to the $j^{th}$ from last $i$ and the $j+1^{th}$ from last $i+1$ in $\unst(\pi)$ into the set $A_\lambda$, and then look for these pairs in the pistols of $T_\delta(\pi)$.  See Figure~\ref{perm jam} for an example.
\begin{figure}[h]
  \ytableausetup{aligntableaux=center}
  \[ \ytableaushort{89, 67, 12345}  \hspace{.4in}
   \ytableaushort{\none \none \none 89, \none \none \none 67, 12345} \hspace{.3in} 
   A_{(5,2,2)} =\{(6, 5), (8, 7)\} \hspace{.3in} 
   \ytableaushort{{\bf 8}69, {\bf 7}12,345}
  \]
\caption{From the left, $U_{(5,2,2)}$, followed by the result of right justifying, followed by $A_{(5,2,2)}$, followed by a standard filling $T$ of partition shape $\mu=(3,3,3)$ such that $\rw(T)\in \SYam((5,2,2))$ and the pair $(8,7)\in A_{(5,2,2)}$ is in a pistol of $\mu$. Thus $\rw(T)\notin \SYam_\mu((5,2,2))$.} \label{perm jam}
\end{figure}
\item The set of permutations in $\SYam(\lambda)$ is a Knuth class. The set $\SYam_\delta(\lambda)$ is necessarily a subset of this class, and so can be expressed via some set of recording tableaux of a given partition shape. Finding a more explicit way of generating all such recording tableaux remains an interesting open problem.
\end{enumerate}
 }
\end{remark}

\subsection{Symmetric Functions}\label{Symmetric Functions}

We briefly recall the interplay between standard fillings, quasisymmetric functions, Schur functions, Macdonald polynomials, and Hall-Littlewood polynomials. All of which have rich connections to the theory of symmetric functions. The reader may also refer to \cite[Ch. 7]{Stanley2}, \cite[Part I]{Fulton}, or \cite[Ch.~4]{Sagan}.

\begin{dfn}
\textup{
Given any signature $\sigma \in \{\pm 1\}^{n-1}$, define the \emph{fundamental quasisymmetric function $F_\sigma (X)$} $\in \mathds{Z}[x_1, x_2, \ldots]$ by}
\[
F_\sigma(X):= \sum_{{i_1 \leq \ldots \leq i_n \atop i_j = i_{j+1} \Rightarrow \sigma_j = +1}} x_{i_1}\cdots x_{i_n}.
\]
\end{dfn}

We may now use the previous definition to define the Schur functions, relying on a result of Ira Gessel. While it is not the standard definition, it is the most functional for our purposes.

\begin{dfn}\label{Ges}
\textup{{\bf \citep{Gessel}} 
\, Given any partition $\lambda$, define 
}
\begin{equation}
s_{\lambda}:= \sum_{T \in \SYT(\lambda)} F_{\sigma(T)}(X),
\end{equation}
where $s_{\lambda}$ is a Schur function of shape $\lambda$.
\end{dfn} 

In order to define the modified Macdonald polynomials and the modified Hall-Littlewood polynomials, we will first need to define some statistics, relying on the results in \citep{HHL} for our definitions. Let $T$ be a filling of a diagram $\delta$. Given a cell $c\in \delta$, let $T(c)$ denote the value of $T$ in cell $c$. A \emph{descent} of $T$ is a cell $c$ in $\delta$ such that $T(c)>T(d)$, where $d$ is the cell directly below and adjacent to $c$. Here, $c$ must have a cell directly below it in $\delta$ in order to be a descent.  We denote the set of descents of $T$ as $\Descent(T)$. As an example, the filling on the left of Figure~\ref{reading word} has descents in the cells containing 3, 4, 6, 8,  and 9, while the filling on the right has no descents. 
Given a cell $c$ of $\delta$, define leg$(c)$ as the number of cells in $\delta$ strictly above and in the same column as $c$. Letting $w=\rw(T)$, we may then define
\begin{equation}
\maj_\delta(w):=\maj(T):=\maj_\delta(\Descent(T)):=\sum_{c\in \Descent(T)} (1+\operatorname{leg}(c)).
\end{equation}


Let $c$, $d$, and $e$ be cells of $\delta$ in row reading order. Then $c$, $d$, and $e$ form a \emph{triple} if $c$ and $d$ are in the same row and $e$ is the cell immediately below $c$, as in Figure~\ref{triple}. If, in addition, $T$ is a filling of the diagram $\delta$, then $c$, $d$, and $e$ form an \emph{inversion triple} of $T$ if $T(e)<T(d)<T(c)$, $T(c)\leq T(e)< T(d)$, or $T(d)< T(c)\leq T(e)$. As a mnemonic, in each set of inequalities, the three cells are presented in a counterclockwise order. 
If $c$ is not in $\delta$, then the remaining two cells form an \emph{inversion pair} of $T$ if $T(d)>T(e)$. 
If $e$ is not in $\delta$, then the remaining two cells form an inversion pair of $T$ if $T(c)>T(d)$. See Figure~\ref{triple} for an example of each of these types of inversions.
By letting $w=\rw(T)$, we may now define the final statistic as
\begin{equation}
\inv_\delta(w):=\inv(T):=
|\{\textup{inversion triples of $T$}\}|+|\{\textup{inversion pairs of $T$}\}|.
\end{equation}

\begin{figure}[H]
\ytableausetup{aligntableaux=center, smalltableaux}
  \[
   \ytableaushort{1{}2, 1} \hspace{.3in} 
   \ytableaushort{2{}1, 2} \hspace{.3in} 
   \ytableaushort{3{}2, 1} \hspace{.3in} 
   \ytableaushort{2{}1, \times}  \hspace{.3in}
   \ytableaushort{\times{}2, 1} \hspace{.4in} 
   \begin{ytableau}
9\\
8& 1& 3  \\
2&\none&\none&5 \\
\none& 6 &4&7
\end{ytableau}
     \]
\caption{From left, three inversion triples, and then two inversion pairs, where $\times$ denotes the lack of a cell. At right, a filling $T$ that has one inversion triple with values in $\{2, 3, 8\}$, inversion pairs with values in $\{2, 3\}, \{4, 5\}$,  and $\{4, 6\}$, and descents in the cells with the values 8 and 9. Thus, $\inv(T)= 4$ and $\maj(T)=3$. }\label{triple}
\end{figure}

We are now able to define a combinatorial generalization of the \emph{modified Macdonald polynomials},
\begin{equation}\label{Macdonalds}
\widetilde H_\delta(X; q, t) := \sum_{ { T \in \SF(\delta)} } 
q^{\inv(T)}t^{\maj(T)}F_{\sigma(T)}.
\end{equation}

\noindent We similarly define the \emph{modified Hall-Littlewood polynomials} as
\begin{equation}\label{Halls}
\HLdelta := \sum_{ { T \in \SF(\delta)} \atop \inv(T)=0} 
t^{\maj(T)}F_{\sigma(T)}.
\end{equation}

\noindent
Further, if we let $\gamma\subset\delta$, we may define
\begin{equation}
\Des:=\sum_{ { T \in \SF(\delta)} \atop {\inv(T)=0 \atop \Descent(T)=\gamma}} F_{\sigma(T)}.
\end{equation}

\noindent
It follows immediately from these definitions that
\begin{equation}
\HLdelta = \sum_{\gamma\subset\delta} t^{\maj_\delta(\gamma)}\Des.
\end{equation}


 Using the involution in \citep{HHL}, it is possible to write $\Macdelta$ as a sum of Lascoux-Leclerc-Thibon polynomials indexed by tuples of (possibly disconnected) ribbons, which are shown to be Schur positive in \citep{grojnowski2007affine}. Hence, $\Macdelta$ is both symmetric and Schur positive. The only distinction between the partition case in \citep{HHL} and this paper is that $\delta$ is not forced to be a partition, and so there are fewer restrictions on the ribbons in the associated Lascoux-Leclerc-Thibon polynomials. Specifically, the ribbons may be disconnected and may not be in descending order of size. While the coefficients of the Schur expansion of $\HLdelta$ and $\Des$ are given by Theorem~\ref{Hall Yam classification}, finding a combinatorial description of this expansion for $\Macdelta$ remains an open problem. 

We should emphasize that our definition of $\Macdelta$ and $\HLdelta$ are combinatorial generalizations, chosen to agree with definitions in \citep{HHL} and related definitions of LLT polynomials. Hence, they need not agree with any algebraic generalizations of Macdonald polynomials. Specifically, Garsia and Haiman conjectured a generalization of Macdonald polynomials to diagram indices in \citep{Garsia:1995:FPR:216279.216300}, with further results contributed by Jason Bandlow in his Ph.D. dissertation \citep{bandlow2007combinatorics}. Their conjecture would require that $\Macdelta=\widetilde H_{\delta^\prime}(X;t,q)$. While this is the case when $\delta$ is a partition, it fails for the diagram $\{(0,0), (1,1)\}$. Finding a way of recovering this symmetry, perhaps by modifying the maj statistic, is an important open problem.

\begin{remark}\label{first 3 rows}\textup{
In order to use Theorem~\ref{Hall Yam classification} to expand $\HLdelta$ in terms of Schur functions, it is necessary to generate $\{ w \in \Yam_\delta(|\delta|) \colon \inv_\delta(w)=0\}$. We may do this by making a tree: proceeding as mentioned in Part 1 of Remark~\ref{yam tree} by filling $\delta$ in reverse row reading order and checking that there are no inversions, that we still have a Yamanouchi word, and that no pistol is jammed with the addition of each new entry. In the case of partition shapes, one can consider such fillings row by row to show that the bottom three rows must satisfy one of the three cases in Figure~\ref{first three rows}. Specifically, the bottom row must be all 1's, the second row starts with $k$ 2's followed by all 1's, and the third row has $j\leq k$ 3's followed by one of three options. Either the rest of the third row is 1's, or there are $k-j$ 1's followed by all 2's, or the rest may be all 2's if the result is still a Yamanouchi word. It is, in theory, possible to precompute more rows in this fashion at the expense of more complicated rules. 
 \begin{figure}[H]
   \begin{center} 
   \ytableausetup{smalltableaux}
\ytableaushort{3311111,2222111, 1111111}
\hspace{.3in}\ytableaushort{3311222,2222111,1111111}
\hspace{.3in}
\ytableaushort{3322222,2222111,1111111}
 \end{center}
 \caption{The three ways to fill the first three rows of $\mu$ when expanding $\HL$ into Schur functions.}\label{first three rows}
 \end{figure}
 It should be noted that the tree described above may still have dead ends. In that respect, a key open problem is to find an algorithm that avoids any dead ends in order to maximize efficiency. Such an algorithm was provided for the Littlewood-Richardson coefficients in \citep{remmel1984multiplying}, suggesting that it may be possible in this case as well. 
}
\end{remark}

\subsection{Dual Equivalence Graphs}

The key tool used in this paper is the theory of dual equivalence graphs. We quickly lay out the necessary background on the subject in this section.

\begin{dfn}[\cite{Haiman}]\label{def: d_i}\textup{
Given a permutation in $S_n$ expressed in one-line notation, define an \emph{elementary dual equivalence} as an involution $d_i$ that interchanges the values $i-1, i,$ and $i+1$ as
}
\begin{equation}\label{dual equiv}
\begin{split}
d_i(\ldots i \ldots i-1 \ldots i+1 \ldots) = (\ldots i+1 \ldots i-1 \ldots i \ldots),\\
d_i(\ldots i-1 \ldots i+1 \ldots i \ldots) = (\ldots i \ldots i+1 \ldots i-1 \ldots),
\end{split}
\end{equation}
\textup{and that acts as the identity if $i$ occurs between $i-1$ and $i+1$. Two permutations are $dual \; equivalent$ if one may be transformed into the other by successive elementary dual equivalences.
}
\end{dfn}

\noindent For example, 21345 is dual equivalent to 41235 because
$d_3(d_2( 21345 )) = d_3( 31245 ) =  41235$. 

We may also let $d_i$ act on the entries of a standard Young tableau by applying them to the row reading word. It is not hard to check that the result of applying this action to a standard Young tableau is again a standard Young tableau. The equivalence classes of Young tableaux under dual equivalence are described in the following theorem.

\begin{thm}[{\cite[Prop. 2.4]{Haiman}}]\label{Haiman}
Two standard Young tableaux on partition shapes are dual equivalent if and only if they have the same partition shape.
\end{thm}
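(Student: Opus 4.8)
The plan is to reduce the statement to the Robinson--Schensted--Knuth correspondence by recognizing each elementary dual equivalence $d_i$ of Definition~\ref{def: d_i} as an \emph{elementary dual Knuth transformation}, that is, as an ordinary elementary Knuth transformation applied to the inverse permutation. I would invoke two standard facts about R-S-K, neither of which depends on the present theorem. First, by the symmetry $Q(\pi)=P(\pi^{-1})$ together with the fact that the three-position rule defining $d_i$ on $\pi$ is exactly an elementary Knuth relation on $\pi^{-1}$, the map $d_i$ preserves the recording tableau: $Q(d_i(\pi))=Q(\pi)$. Second, each fiber of the recording map is a single equivalence class under the $d_i$ (the dual of the fundamental theorem that Knuth classes are the fibers of $P$). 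I would also record a \emph{reading word lemma}: for every $T\in\SYT(\lambda)$ one has $P(\rw(T))=T$, and $Q(\rw(T))$ depends only on the shape $\lambda$; call this common tableau $Q_\lambda$.

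Granting these, both directions follow quickly. Since the excerpt already guarantees that applying $d_i$ to $\rw(T)$ returns the reading word of some standard Young tableau, the action of $d_i$ on $\SYT$ is well defined, and because $d_i$ preserves $Q$ it preserves the common shape of $P(\rw(T))=T$ and $Q(\rw(T))$. Hence dual equivalent tableaux share a shape, which is the ``only if'' direction. For the ``if'' direction, the reading word lemma shows that $T\mapsto\rw(T)$ is an injection of $\SYT(\lambda)$ into the fiber $Q^{-1}(Q_\lambda)$; comparing cardinalities --- both equal the number $f^\lambda$ of standard Young tableaux of shape $\lambda$, the latter because fixing $Q=Q_\lambda$ lets $P$ range over all tableaux of shape $\lambda$ under R-S-K --- shows the map is a bijection onto that fiber. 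That fiber is a single $d_i$-class, so all of its elements, equivalently all standard Young tableaux of shape $\lambda$, are dual equivalent.

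The step I expect to require the most care is the reading word lemma, namely that $Q(\rw(T))$ is independent of $T$: this is where one must control the interaction between the row-bumping algorithm and the top-to-bottom reading order, and it is exactly what makes the cardinality count collapse to a bijection. If one instead wants a proof internal to tableaux that avoids quoting the connectivity of dual Knuth fibers, I would prove the ``if'' direction by induction on $n=|\lambda|$: the transformations $d_i$ with $i\le n-2$ fix the cell containing $n$ and restrict to elementary dual equivalences of $T|_{[n-1]}$, so by induction any two tableaux of shape $\lambda$ that agree on the location of $n$ are dual equivalent. It then remains to move the entry $n$ between the various outer corners of $\lambda$ using $d_{n-1}$, which permutes $n-2,n-1,n$. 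This corner-to-corner step is the genuine combinatorial obstacle, since $d_{n-1}$ may displace $n-1$ as it moves $n$, and one must check that adjacent outer corners are always bridgeable so that the corners form a single connected ``ladder.''
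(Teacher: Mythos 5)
The paper does not prove this statement at all: it is imported verbatim from Haiman's paper as background, so there is no internal proof to compare against. Judged on its own, your argument is correct. The identification of $d_i$ with an elementary Knuth move applied to $\pi^{-1}$ at the three adjacent positions $i-1,i,i+1$ is exact (the positionally middle value among $i-1,i,i+1$ stays fixed and the other two trade places, which is precisely the paper's Definition~\ref{def: d_i}), and the facts you invoke --- the symmetry theorem $Q(\pi)=P(\pi^{-1})$ and Knuth's theorem that the fibers of $P$ are the Knuth classes --- are proved independently of dual equivalence, so there is no circularity. Your reading word lemma also holds: because the columns of $T$ increase upward, inserting $\rw(T)$ row by row causes each new row to bump the previously inserted rows in a cascade whose combinatorics depend only on the shape, giving $P(\rw(T))=T$ and a recording tableau $Q_\lambda$ determined by $\lambda$ alone; the cardinality count $|\SYT(\lambda)|=f^\lambda=|Q^{-1}(Q_\lambda)|$ then correctly upgrades the injection to a bijection, and since the whole fiber consists of reading words of shape-$\lambda$ tableaux, every intermediate step of a connecting chain of $d_i$'s is again a tableau reading word, so the chain genuinely witnesses dual equivalence of tableaux. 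For what it is worth, Haiman's own published proof is essentially the elementary induction you sketch as an alternative --- fix the cell containing $n$, apply induction to $T|_{[n-1]}$, then move $n$ between adjacent outer corners with $d_{n-1}$ --- and the corner-connectivity step you flag is indeed where the work lies there. The RSK route is shorter given the standard toolkit; the inductive route is self-contained and is closer in spirit to the restriction arguments this paper itself uses in Lemmas~\ref{move n-1}--\ref{yam to yam}.
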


\noindent
It follows from \cite[Lem. 2.3]{Haiman} that the action of $d_i$ on $S_n$ is further related to its action on $\SYT(n)$ by
\begin{equation}\label{d and P}
d_i(P(\pi))=P(d_i(\pi)).
\end{equation}

By definition, $d_i$ is an involution, and so we can define a graph on standard Young tableaux by letting each nontrivial orbit of $d_i$ define an edge colored by $i$. By Theorem \ref{Haiman}, the graph on SYT($n$) with edges colored by $1 < i < n$ has connected components with vertices in SYT($\lambda$) for each $\lambda \vdash n$. We may further label each vertex with its signature to create a \emph{standard dual equivalence graph} that we will denote $\mathcal{G}_\lambda=(V, \sigma, E)$, where $V=\SYT(\lambda)$ is the vertex set, $\sigma$ is the signature function, and $E$ is the colored edge set. Refer to Figure~\ref{5graph} for examples of $\mg_\lambda$ with $\lambda\vdash 5$.
%
\begin{figure}[h]
\[ \begin{array}{cc}
  \vcenter{\scalebox{.85}{ \vbox{
\xymatrix{ & && {\;\; \Yvcentermath1 \scriptsize {\young(12345) \atop ++++}}}
}}}
& \hspace{-3.7in}
\vcenter{\scalebox{.85}{ \vbox{
	\xymatrix{
		{\Yvcentermath1 \scriptsize {\young(34,125) \atop +-++}} \ar@{=}[r]^{2}_{3} & {\Yvcentermath1 \scriptsize {\young(24,135)}\atop -+-+} \ar@{-}[r]^{4}	& {\Yvcentermath1 \scriptsize {\young(25,134)}\atop -++-} \ar@{-}[r]^2	& { \Yvcentermath1 \scriptsize {\young(35,124)}\atop +-+-} \ar@{=}[r]^3_4 & {\Yvcentermath1 \scriptsize {\young(45,123) \atop ++-+}} 
		} }}}
 \\ & \\
\vcenter{\scalebox{.85}{ \vbox{ \xymatrix{
		{\Yvcentermath1 \scriptsize {\young(2,1345) \atop -+++}} \ar@{-}[r]^{2} 	& {\Yvcentermath1 \scriptsize {\young(3,1245)}\atop +-++} \ar@{-}[r]^{3}	& {\Yvcentermath1 \scriptsize {\young(4,1235)}\atop ++-+} \ar@{-}[r]^4	& { \Yvcentermath1 \scriptsize {{\young(5,1234) \atop +++-}}
} }} } }
&\hspace{-3.7in}
	\vcenter{\scalebox{.85}{ \vbox{ \xymatrix{
		&&{\Yvcentermath1 \scriptsize {\young(4,3,125)}\atop +--+} \ar@{-}[rd]^{4} &&\\
		{\Yvcentermath1 \scriptsize {\young(3,2,145)}\atop --++}\; \ar@{-}[r]^{3}	& {\Yvcentermath1 \scriptsize{\young(4,2,135)}\atop -+-+} \ar@{-}[ur]^{2} \ar@{-}[rd]_{4} && {\Yvcentermath1 \scriptsize{ \young(5,3,124)}\atop +-+-} \ar@{-}[r]^{3} & {\Yvcentermath1 \scriptsize{\young(5,4,123)}\atop ++--}\\
		&&{\Yvcentermath1 \scriptsize {\young(5,2,134)}\atop -++-} \ar@{-}[ur]_{2} &&
		} }} } 
\end{array} \]
\caption{The standard dual equivalence graphs on all $\lambda \vdash 5$ up to conjugation.}
 \label{5graph}
\end{figure}


Definition \ref{Ges} and Theorem \ref{Haiman} determine the connection between Schur functions and dual equivalence graphs as highlighted in \cite{assaf2015dual}. Given any standard dual equivalence graph $\mg_\lambda=(V,\sigma, E)$,
\begin{equation} \label{schurs}
\sum_{v\in V} F_{\sigma(v)} = s_\lambda.
\end{equation}

\noindent Here, $\mathcal{G}_\lambda$ is an example of the following broader class of graphs.

\begin{dfn}
\textup{
A \emph{signed colored graph} consists of the following data:}
\begin{enumerate}
\vspace{-.02in}
\item a finite vertex set $V$,
\item a signature function $\sigma:V \rightarrow \{\pm1\}^{n-1}$ for some fixed positive integer $n$,
\item a collection $E_i$ of unordered pairs of distinct vertices in $V$ for each  $i \in \{2, \ldots, n-1\}$ and the same positive integer $n$.
\end{enumerate}

\noindent \textup{We denote a signed colored graph by $\mg =(V, \sigma, E_{2}\cup \cdots \cup E_{n-1})$ or simply $\mg=(V, \sigma, E)$.
}
\end{dfn}

In order to give an abstract definition of dual equivalence graphs, we will need definitions for isomorphisms and restrictions.
Given two signed colored graphs $\mg(V,\sigma,E)$ and $\mh(V^\prime, \sigma^\prime, E^\prime)$, an \emph{isomorphism} $\phi\colon \mg\to \mh$ is a bijective map from $V$ to $V^\prime$ such that both $\phi$ and $\phi^{-1}$ preserve colored edges and signatures. The definition of a restriction is a bit more technical.

\begin{dfn}\label{I restriction}
\textup{Given a signed colored graph $\mg=(V, \sigma, E)$ and interval of positive integers $I$, define the restriction of $\mg$ to $I$, denoted $\mg|_I$, as the signed colored graph $\mh=(V, \sigma^\prime, E^\prime)$, where
\begin{enumerate}
\item 
$\sigma^\prime(v)_i=\sigma(v)_{\min(I)+i-1}$ 
when $i \in \{1, \ldots, |I|-1\}$ and $\sigma_{\min(I)+i-1}$ is defined.
\item
$E_i^\prime= E_{\min(I)+i-1}$
when $i \in \{2, 3, \ldots, |I|-1\}$ and $E_{\min(I)+i-1}$ is defined.
\end{enumerate}
}
\end{dfn}

One useful example of Definition~\ref{I restriction} arises by considering the restriction of $\mg_\lambda$ to an integer interval $I\subset [|\lambda|]$, as in Figure~\ref{restricting to skew}. In this case, we can modify the vertex set  of restricted graph by removing cells with values not in $I$ and then lowering the remaining values by $\min(I)-1$. Edges and signatures are then given by the action of $d_i$ and $\sigma$, respectively, on these new vertices.

\begin{figure}[h]

\[
\scalebox{1}{ \vbox{
	\xymatrix{
		{\Yvcentermath1 \scriptsize {\young(34,125) \atop +-++}} \ar@{=}[r]^{2}_{3} & {\Yvcentermath1 \scriptsize {\young(24,135)}\atop -+-+} \ar@{-}[r]^{4}	& {\Yvcentermath1 \scriptsize {\young(25,134)}\atop -++-} \ar@{-}[r]^2	& { \Yvcentermath1 \scriptsize {\young(35,124)}\atop +-+-} \ar@{=}[r]^3_4 & {\Yvcentermath1 \scriptsize {\young(45,123) \atop ++-+}} 
		} }}
\]\vspace{.1in}\[
\scalebox{1}{ \vbox{
	\xymatrix{
		{\Yvcentermath1 \scriptsize {\young(34,125) \atop -++}} \ar@{-}[r]^{2} & {\Yvcentermath1 \scriptsize {\young(24,135)}\atop +-+} \ar@{-}[r]^{3}	& {\Yvcentermath1 \scriptsize {\young(25,134)}\atop ++-} 	& { \Yvcentermath1 \scriptsize {\young(35,124)}\atop -+-} \ar@{=}[r]^3 _2& {\Yvcentermath1 \scriptsize {\young(45,123) \atop +-+}} 
		} }}
\]\vspace{.1in}\[
\scalebox{1}{ \vbox{
	\xymatrix{
		{\Yvcentermath1 \scriptsize {\young(23,:14) \atop -++}} \ar@{-}[r]^{2} & {\Yvcentermath1 \scriptsize {\young(13,:24)}\atop +-+} \ar@{-}[r]^{3}	& {\Yvcentermath1 \scriptsize {\young(14,:23)}\atop ++-} 	& { \Yvcentermath1 \scriptsize {\young(24,:13)}\atop -+-} \ar@{=}[r]^2_3 & {\Yvcentermath1 \scriptsize {\young(34,:12) \atop +-+}} 
		} }}		
\]
\caption{The standard dual equivalence graph $\mg_{(3,2)}$ above its restriction to $I=\{2, 3, 4, 5\}$ in the center row. In the bottom row, each vertex relabeled by the result of subtracting 1 from each entry of $T|_I$ so that edges and signatures are given by the actions of $d_i$ and $\sigma$, respectively. }\label{restricting to skew}
\end{figure}

We now proceed to the definition of a dual equivalence graph. Here, we use results in \citep{DEG+LLT} as our definition (see Remark~\ref{DEG def remark} for details). For  more general definitions, see \citep{assaf2015dual} and \citep{DEG+LLT}.

\begin{dfn} \label{properties}  
\textup{A signed colored graph $\mathcal{G} = (V, \sigma, E)$ is a \emph{dual equivalence graph}  if the following two properties hold. 
\begin{enumerate}[align=left, leftmargin=*]
\item[\emph{Locally Standard Property}:]  If $I$ is any interval of integers with $|I|=6$, then each component of $\mg|_I$ is isomorphic to some $\mg_\lambda$.
\item[\emph{Commuting Property}:]  If $\{v,w\} \in E_i$ and $\{w,x\}  \in E_j$ for some $|i-j| > 2$, then there exists $y\in V$ such that $\{v,y\} \in E_j$ and $\{x,y\} \in E_i$.
\end{enumerate} }
\end{dfn}

\begin{remark}\label{DEG def remark}
\textup{
The definition of dual equivalence graph in \citep{assaf2015dual} requires that a signed colored graph satisfy six axioms. Theorem~3.17 of \citep{DEG+LLT} eliminates Axiom~6 by strengthening Axiom 4, replacing it with Axiom $4^+$. The Commuting Property is a restatement of Axiom~5. As mentioned in Part~2 of Remark~3.18 in \citep{DEG+LLT}, Axioms 1, 2, 3, and $4^+$ are equivalent to our Locally Standard Property. More specifically, Axiom $4^+$ dictates the possible unsigned graph structures of 4 consecutive colors, while Axioms 1-3 dictate the relationships between $i$-edges and the signatures of their vertices. In fact, given these axioms, the edges fully determine the signatures up to multiplying every signature by $-1$, as is also the case with our Locally Standard Property. The only logical distinction between the result in  \citep{DEG+LLT} and the definition given here is that the former, for the sake of their inductive argument, considers graphs with some sets of $E_i$ omitted.
}
\end{remark}




\begin{thm}[{\citep[Thm~3.7]{assaf2015dual}}]\label{main}\label{Tableaux are DEGs}
A connected component of a signed colored graph is a dual equivalence graph if and only if it is isomorphic to a unique $\mg_\lambda$. 
\end{thm}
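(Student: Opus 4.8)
The plan is to reduce everything to statements about the standard dual equivalence graphs $\mg_\lambda$, since both defining properties in Definition~\ref{properties} are manifestly preserved by isomorphism of signed colored graphs. The claim then splits into three tasks: (i) every $\mg_\lambda$ is itself a dual equivalence graph; (ii) non-isomorphic shapes give non-isomorphic graphs, so the $\lambda$ in the conclusion is unique; and (iii) the converse, that an abstract connected dual equivalence graph must be isomorphic to some $\mg_\lambda$. Tasks (i) and (ii) are the routine directions, while (iii) is the heart of the theorem.

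To see that each $\mg_\lambda$ is a dual equivalence graph, the Commuting Property is immediate: when $|i-j|>2$ the triples $\{i-1,i,i+1\}$ and $\{j-1,j,j+1\}$ are disjoint, so $d_i$ and $d_j$ act on disjoint values and commute as involutions on $\SYT(\lambda)$, and taking $y=d_i(d_j(v))=d_j(d_i(v))$ supplies the required vertex. For the Locally Standard Property I would examine a restriction $\mg_\lambda|_I$ with $|I|=6$: after the relabeling described following Definition~\ref{I restriction}, the vertices become standard skew fillings on six cells, two lying in a common component exactly when they are dual equivalent under the interior $d_i$. By the skew extension of Theorem~\ref{Haiman} these classes are indexed by a single shape $\mu\vdash 6$, and on each class the map $\pi\mapsto P(\pi)$ is, via the intertwining $d_i(P(\pi))=P(d_i(\pi))$, a signed colored isomorphism onto $\mg_\mu$; hence every component of $\mg_\lambda|_I$ is some $\mg_\mu$. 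Uniqueness of $\lambda$ is then short: an isomorphism preserves the multiset of signatures and hence the sum $\sum_{v\in V}F_{\sigma(v)}$, which by \eqref{schurs} equals $s_\lambda$; since the Schur functions are linearly independent, $\mg_\lambda\cong\mg_\mu$ forces $\lambda=\mu$.

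The real work is task (iii), which I would carry out by induction on $n$, where the signatures lie in $\{\pm1\}^{n-1}$. Given a connected dual equivalence graph $\mathcal{C}$, restrict to $I=\{1,\dots,n-1\}$; one checks directly that $\mathcal{C}|_I$ again satisfies both properties (a size-$6$ subinterval of $[n-1]$ is still a size-$6$ interval, and discarding the top color cannot break commuting), so by the inductive hypothesis each of its components is isomorphic to a unique $\mg_\mu$ with $\mu\vdash n-1$. It then remains to understand how these pieces are reassembled by the color-$(n-1)$ edges together with the final signature coordinate. The strategy is to fix a base vertex $v_0$, send it to a chosen tableau $T_0\in\SYT(\lambda)$ for the candidate shape $\lambda$ (assembled from the corner-removal data of the restricted components), and extend the map along edges by $\phi(w)=d_i(\phi(v))$ whenever $\{v,w\}\in E_i$. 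The Locally Standard Property guarantees that along any path using six consecutive colors the target tableaux evolve exactly as in $\mg_\lambda$, while the Commuting Property reconciles edges of far-apart colors; together they should force $\phi$ to be well defined around every cycle and to be a bijective signed colored isomorphism onto a single $\mg_\lambda$.

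I expect the main obstacle to be precisely this gluing step: proving that the locally standard pieces can be joined in only one way, namely the way dictated by a single partition shape. Concretely, one must show that the color-$(n-1)$ edges connecting the inductively identified $\mg_\mu$ components are rigidly determined by the local six-color data and are mutually consistent, so that propagating tableaux along edges never produces a contradiction around a cycle. This is where the full strength of the Locally Standard Property — not merely its consequences on four consecutive colors — and the Commuting Property are both essential, and it is the step that forces the intricate case analysis underlying the original argument.
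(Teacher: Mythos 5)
This statement is imported verbatim from the literature (it is Assaf's Theorem~3.7, as the citation indicates), so the paper offers no proof of its own to compare against; your attempt has to be judged on its own merits. The routine parts of your outline are fine: the Commuting Property for $\mg_\lambda$ via disjointness of $\{i-1,i,i+1\}$ and $\{j-1,j,j+1\}$, the uniqueness of $\lambda$ via linear independence of Schur functions and \eqref{schurs}, and the identification of components of $\mg_\lambda|_I$ with standard graphs via $d_i(P(\pi))=P(d_i(\pi))$ (though you should be careful that the result of Haiman you need for the skew case is not Theorem~\ref{Haiman} itself --- two skew tableaux of the same shape need not be dual equivalent --- but the finer statement that each dual equivalence class of a skew shape maps isomorphically onto some $\SYT(\mu)$ under jeu de taquin/insertion).

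The genuine gap is in task (iii), and you name it yourself: the well-definedness of the map $\phi$ extended edge-by-edge from a base vertex, i.e.\ the consistency of the propagation around every cycle and the rigidity of how the color-$(n-1)$ edges attach the inductively identified $\mg_\mu$ components to one another. Phrases like ``should force $\phi$ to be well defined'' and ``I expect the main obstacle to be precisely this gluing step'' are an acknowledgement that the step is not carried out, and that step \emph{is} the theorem --- everything else is bookkeeping. It is not a detail one can wave at: the whole reason Assaf's original axiom system needed a sixth axiom (and the reason \citep{DEG+LLT} had to strengthen axiom~4 to $4^+$ to eliminate it) is that purely local conditions on few consecutive colors do not automatically globalize; there exist signed colored graphs satisfying weaker local axioms that are not isomorphic to any $\mg_\lambda$. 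So the claim that the six-color Locally Standard Property plus commutation ``force'' consistency around cycles is exactly the nontrivial assertion, and as written your argument assumes it rather than proves it. To close the gap you would need, at minimum, a lemma showing that the isomorphism type of the component and the position of the cell containing $n$ in each restricted component determine the $(n-1)$-edges uniquely, together with an argument that the resulting candidate shape $\lambda$ is the same no matter which restricted component one starts from.
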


Next, we will associate with every Macdonald polynomial and Hall-Littlewood polynomial a signed colored graph. To do this, we need to define an involution $D_i^\delta$ to provide the edge sets of a signed colored graph, as defined in \citep{assaf2015dual}. First let 
 $\tilde d_i:S_n \rightarrow S_n$ be the involution that cyclically permutes the values $i-1, i,$ and $i+1$ as
\begin{equation}
\begin{split}
\tilde d_i(\ldots i \ldots i-1 \ldots i+1 \ldots) = (\ldots i-1 \ldots i+1 \ldots i \ldots), \\
\tilde d_i(\ldots i \ldots i+1 \ldots i-1 \ldots) = (\ldots i+1 \ldots i-1 \ldots i \ldots),
\end{split}
\end{equation}
\noindent and that acts as the identity if $i$ occurs between $i-1$ and $i+1$. For example, $\tilde d_3 \circ \tilde d_2(4123) = \tilde d_3(4123) = 3142.$

We now define the desired involution. Given $\pi \in S_n$ and a diagram $\delta$ of size $n$,

\begin{equation} \label{def: D}
D_i^{\delta}(\pi) :=
 \begin{cases}
  \tilde d_i(\pi) & \textup{if $i-1$, $i$, and $i+1$ are in a pistol of  $T_\delta(\pi)$}\\
  d_i(\pi) & \textup{otherwise.}
\end{cases}
\end{equation}

\noindent As an example, we may take $\pi=53482617$ and $\delta$ as in Figure \ref{tau2}. Then $D_3^{\delta}(\pi)=\tilde d_3(\pi)=54283617$ and $D_5^{\delta}(\pi)=d_5(\pi)=63482517$. 

\begin{figure}[H]
  \ytableausetup{smalltableaux,aligntableaux=bottom}
  \[ 
   \ytableaushort{534, 826, \none\none 17} \hspace{.4in} 
   \ytableaushort{542, 836, \none\none 17} \hspace{.4in} 
   \ytableaushort{634, 825, \none\none 17} \vspace{-.1in}
  \]
  \caption{Three standard fillings of a diagram $\delta$. At left, a standard filling with row word $\pi=53482617$ followed by $T_\delta(D_3^{\delta}(\pi))$ and then $T_\delta(D_5^{\delta}(\pi))$.}
\label{tau2}
\end{figure}
Given some $\delta$ of size $n$, we may then define an \emph{Assaf Graph} as the signed colored graph $\mh_{\delta}=(V, \sigma, E)$ with vertex set $V=S_n$, signature function $\sigma$ given via inverse descents, and edge sets $E_i$ defined via the nontrivial orbits of $D^{\delta}_i$. 
 It is readily shown that the action of $D_i^{\delta}$ on $\pi$ preserves $\inv_\delta(\pi)$, $\Descent_\delta(\pi)$, and  $\maj_\delta(\pi)$ (see \citep{assaf2015dual}). Thus, these functions are all constant on components of $\mh_{\delta}$. 
We may study $\HLdelta$ by restricting our attention to components of $\mh_{\delta}$ where $\inv_\delta$ is zero, as in the following definition.

\begin{dfn}\label{HL graph}
\textup{
Let $\gamma$ and $\delta$ be diagrams such that $\gamma \subset \delta$ and $|\delta|=n$. We define two subgraphs of $\mh_\delta=(V,\sigma,E)$ induced by restricting the vertex set $V$ as follows.
\begin{enumerate}
\item $\mP_\delta :=(V^{\prime}, \sigma, E^{\prime})$ is the subgraph with $V^{\prime}=\{ \pi\in S_{n} \colon \inv_\delta(\pi)=0\}$.
\item $\mr_{\gamma, \delta} :=(V^{\prime\prime}, \sigma, E^{\prime\prime})$ is the subgraph with 
$V^{\prime\prime}=\{ \pi\in S_{n} \colon \inv_\delta(\pi)=0, \Descent_\delta(\pi)=\gamma \}$.
%
%
\end{enumerate}}
\end{dfn}

Notice that each subgraph is a union of connected components of $\mh_\delta$. 
 Furthermore, 
\begin{align}
\begin{split}
\Macdelta= &\sum_{v\in V} q^{\inv_\delta(v)}t^{\maj_\delta(v)}F_{\sigma(v)}\\ \HLdelta=& \sum_{v\in V^{\prime}} t^{\maj_\delta(v)}F_{\sigma(v)},\\ \label{P by graph}
\Des=& \sum_{v\in V^{\prime\prime}} F_{\sigma(v)}.
\end{split}
\end{align}

The Assaf graph $\mh_\delta$ is the primary object of interest in the proof of the following lemma, which was originally stated in terms of Lascoux-Leclerc-Thibon polynomials but is easily translated using results in \citep{HHL}.

\begin{lem}[{\cite[Thm.~4.3]{DEG+LLT}}]\label{3 cell pistols}
Let $\delta$ be a diagram such that no pistol of $\delta$ contains more than three cells. Then
\[
\Macdelta = \sum_{\lambda \vdash |\delta|} \;
\sum_{w \in \Yam(\lambda)}  q^{\inv_\delta(w)}t^{\maj_\delta(w)}s_\lambda.
\]
\end{lem}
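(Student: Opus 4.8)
The plan is to read the Schur expansion off the component structure of the Assaf graph $\mh_\delta$, after proving that the three-cell hypothesis forces every component of $\mh_\delta$ to be a dual equivalence graph. By \eqref{P by graph} we already have $\Macdelta = \sum_{v\in V} q^{\inv_\delta(v)}t^{\maj_\delta(v)}F_{\sigma(v)}$ with $V=S_n$, and since $\inv_\delta$ and $\maj_\delta$ are constant on components, this sum factors as $\sum_{C} q^{\inv_\delta(C)}t^{\maj_\delta(C)}\big(\sum_{v\in C}F_{\sigma(v)}\big)$, the outer sum running over components $C$ of $\mh_\delta$ and $\inv_\delta(C),\maj_\delta(C)$ denoting the common values on $C$. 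If each $C$ is a dual equivalence graph, then Theorem~\ref{main} supplies a unique partition $\lambda(C)$ with $C\cong\mg_{\lambda(C)}$, and \eqref{schurs} collapses the inner sum to $s_{\lambda(C)}$; the whole polynomial then becomes $\sum_{C} q^{\inv_\delta(C)}t^{\maj_\delta(C)}s_{\lambda(C)}$.

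The main obstacle is precisely the verification that each component of $\mh_\delta$ meets the two conditions of Definition~\ref{properties}. The Commuting Property is essentially automatic: for $|i-j|>2$ the triples of values $\{i-1,i,i+1\}$ and $\{j-1,j,j+1\}$ are disjoint, so $D_i^\delta$ and $D_j^\delta$ move disjoint letters and commute. The work is in the Locally Standard Property, which demands that for every length-six color interval $I$, each component of $\mh_\delta|_I$ be isomorphic to some $\mg_\lambda$. The subtlety is that $\mh_\delta$ replaces the elementary dual equivalence $d_i$ by the twisted involution $\tilde d_i$ exactly when $i-1,i,i+1$ share a pistol of $T_\delta(\pi)$. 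I would localize this: under the hypothesis that no pistol has more than three cells, a triggering pistol must consist of exactly the three cells carrying $i-1,i,i+1$, which pins down the neighboring configuration tightly enough that a finite case analysis over the arrangements of consecutive colors inside a six-interval shows the twisted edges reassemble into a copy of a standard $\mg_\lambda$. The three-cell bound is what rules out the larger pistols whose local patterns fail to be standard (the obstruction already visible for shapes such as $(3,3,3)$), so this hypothesis is exactly what the argument consumes. This is the substance of the cited result and the step I expect to be hardest.

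Granting that every component is a dual equivalence graph, the last step is to rewrite $\sum_{C}q^{\inv_\delta(C)}t^{\maj_\delta(C)}s_{\lambda(C)}$ as the stated sum over Yamanouchi words. For this I would show that every component $C\cong\mg_\lambda$ contains exactly one standardized Yamanouchi word $\pi_C\in\SYam(\lambda)$ --- the vertex matched to the distinguished tableau $U_\lambda$ under the isomorphism of Theorem~\ref{main} --- and that $C\mapsto\unst(\pi_C)$ is a bijection from the shape-$\lambda$ components onto $\Yam(\lambda)$. The counts are consistent, since there are $f^\lambda=|\SYam(\lambda)|=|\Yam(\lambda)|$ components of shape $\lambda$. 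Because standardization preserves both statistics --- a routine consequence of the definitions in \eqref{Macdonalds}, as the inequalities defining descents and inversion triples resolve equal entries by reading-order position --- we have $\inv_\delta(C)=\inv_\delta(\unst(\pi_C))$ and $\maj_\delta(C)=\maj_\delta(\unst(\pi_C))$. Hence each component contributes exactly $q^{\inv_\delta(w)}t^{\maj_\delta(w)}s_\lambda$ for its associated word $w=\unst(\pi_C)\in\Yam(\lambda)$, and summing over all components and all $\lambda$ produces the identity of the lemma.
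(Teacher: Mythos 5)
Your route is genuinely different from the paper's: the paper proves this lemma by translating fillings of $\delta$ into tuples of ribbons as in \citep{HHL}, so that $\Macdelta$ becomes a sum of LLT polynomials of diameter at most $3$, and then quotes \cite[Thm.~4.3]{DEG+LLT} directly. You instead rebuild the statement from the dual equivalence machinery of the present paper. Your first step is sound and can even be made cleaner than you suggest: rather than redoing a six-interval case analysis, note that every forbidden configuration in Lemma~\ref{DEG by pattern} requires four indices to be $\delta$-pistoled, which is impossible when no pistol has more than three cells; so every component of $\mh_\delta$ is a dual equivalence graph. The factorization of $\Macdelta$ over components and the reduction of each component's quasisymmetric sum to $s_{\lambda(C)}$ via Theorem~\ref{main} and \eqref{schurs} are also fine.

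The genuine gap is in your third step. You assert that the vertex of $C$ matched to $U_\lambda$ under the isomorphism of Theorem~\ref{main} lies in $\SYam(\lambda)$, and that $C\mapsto \unst(\pi_C)$ bijects shape-$\lambda$ components with $\Yam(\lambda)$. Theorem~\ref{main} gives only an abstract isomorphism preserving colored edges and signatures; the preimage of $U_\lambda$ is merely some permutation with signature $\sigma(U_\lambda)$, and nothing forces it to be a standardized Yamanouchi word. If every edge of $\mh_\delta$ were a plain $d_i$ this would follow from the R-S-K picture (components are indexed by recording tableaux and each contains exactly one permutation with $P=U_\lambda$), but precisely on three-cell pistols the edges act by $\tilde d_i$, which does not commute with insertion and destroys the recording-tableau invariance. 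The claim you need is exactly Theorem~\ref{yam decomposition of graphs} (with the jamming condition vacuous here, since in a Yamanouchi word the index of the $j^{th}$ from last $i$ and of the $j+1^{th}$ from last $i+1$ differ by at least three and so cannot share a pistol of at most three cells); its proof via Lemmas~\ref{move n-1}--\ref{yam to yam} is the technical core of the paper and cannot be replaced by the observation that ``the counts are consistent,'' which presupposes that the number of shape-$\lambda$ components equals $f^\lambda$ --- itself part of what must be proved. So your outline is a legitimate alternative derivation, but only once Theorem~\ref{yam decomposition of graphs} is available; as written, the key step is asserted rather than proved.
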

\begin{proof}
For the sake of completeness, we briefly sketch how to translate \citep[Thm.~4.3]{DEG+LLT} into the form used in Lemma~\ref{3 cell pistols}. Every standard filling of $\delta$ can be taken to a unique standard filling of a tuple of not-necessarily connected ribbon tableaux $\bnu$, as described in \citep{HHL}, taking the row reading word of a filling of $\delta$ to the content reading word of $\bnu$. In this way, $\Macdelta$ can be expanded into a sum of LLT polynomials, where each term is multiplied by an appropriate power of $q$ and $t$. The requirement that each pistol of $\delta$ contains no more than three cells is equivalent to requiring that the diameter of $\bnu$ is at most 3, as defined in \citep{DEG+LLT}. If the diameter of $\bnu$ is at most 3, then  \citep[Thm.~4.3]{DEG+LLT} states that the Schur expansion of the related LLT polynomial may be retrieved as a sum over fillings of $\bnu$ whose reading words are Yamanouchi words. Lemma~\ref{3 cell pistols} is then an immediate result of sending fillings of $\delta$ to fillings of $\bnu$ and applying this Schur expansion.  
\end{proof}


\section{Dual Equivalence graphs in $\mh_{\delta}$}\label{DEGs of tau}

\subsection{The graphs $\mP_\delta$ and $\mr_{\gamma,\delta}$}

Next we give a classification for when a component of $\mh_{\delta}$ is a dual equivalence graph. To do this, we will need the following definition. 
%
Given permutations $p\in S_m$ and $\pi \in S_n$ with $m \leq n$, we say that $p$ is a \emph{strict pattern} of $\pi$ if there exists some sequence $i_1 < i_2 < \ldots < i_m$ such that $\pi_{i_j}=p_j+k$ for some fixed integer $k$ and all $1\leq j\leq m$. Furthermore, we say that $p$ is a $\delta$-strict pattern of $\pi$ if the indices $i_1, i_2, \ldots, i_m$ of $\pi$ are $\delta$-pistoled. 
 As an example, on the left side of Figure~\ref{tau2},  $p=231$ is a strict pattern of $\pi=53482617$ occurring in $\pi$ at indices 2, 3, and 5. These indices correspond to the second, third, and fifth cell in row reading order of the given diagram $\delta$. These cells are contained in a pistol of $\delta$.  Thus, $p=231$ is a $\delta$-strict pattern of $\pi$. 

\begin{lem}\label{DEG by pattern}
Let $\mg=(V,\sigma, E)$ be a component of $\mh_{\delta}$. Then $\mg$ is a dual equivalence graph if and only if  for every vertex $\pi\in V$, the following two conditions hold.
\begin{enumerate}
\item The permutations 1342 
 and 2431 are not $\delta$-strict patterns of $\pi$.
\item If the permutation 12543 or 34521 is a strict pattern of $\pi$ occurring in indices $i_1, \ldots, i_5$, then $\{i_1, \ldots, i_5\}$ is $\delta$-pistoled, $\{i_1, \ldots, i_4\}$ is not $\delta$-pistoled, or $\{i_2, \ldots, i_5\}$ is not $\delta$-pistoled.
\end{enumerate}
\end{lem}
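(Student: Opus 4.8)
The plan is to verify the two defining properties of Definition~\ref{properties}---the Locally Standard Property and the Commuting Property---directly from the case definition of $D_i^{\delta}$ in \eqref{def: D}. The guiding observation I would record first is that $d_i$ and $\tilde d_i$ differ \emph{only} in the relative order they assign to the values $i-1$ and $i+1$: in every configuration where $i$ is not read between $i-1$ and $i+1$, both moves send $i$ to the same end, so they induce the \emph{same} signature change (the signature records only the relative order of consecutive values, and $i-1,i+1$ are not consecutive). Consequently the set of $i$-edges and all signatures of $\mh_\delta$ agree locally with those of $\mg_\lambda$. By the decomposition recalled in Remark~\ref{DEG def remark}, the axioms governing a single color together with its nearby signatures (Axioms 1--3) then hold automatically, so the whole content of the Locally Standard Property is reduced to the \emph{unsigned} structure of windows of consecutive colors (Axiom $4^+$), which is exactly where the $i-1$ versus $i+1$ discrepancy introduced by $\tilde d_i$ can matter.

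Next I would dispatch the Commuting Property, which I expect holds with no hypotheses on $\delta$. For $|i-j|>2$ the triples $\{i-1,i,i+1\}$ and $\{j-1,j,j+1\}$ are disjoint, so $D_i^{\delta}$ and $D_j^{\delta}$ permute disjoint sets of values. Applying $D_j^{\delta}$ to a permutation only rearranges $j-1,j,j+1$ among their cells and leaves untouched the cells occupied by $i-1,i,i+1$; hence it preserves both the arrangement of $i-1,i,i+1$ and whether those cells lie in a common pistol of $\delta$. The choice between $\tilde d_i$ and $d_i$ is therefore unaffected by $D_j^{\delta}$, and symmetrically, so the two involutions commute. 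Taking $y=D_j^{\delta}(v)$ then supplies the vertex demanded by the Commuting Property on every component of $\mh_\delta$.

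It remains to analyze the unsigned multi-color structure. Here $D_i^{\delta}$ deviates from $d_i$ exactly when $i-1,i,i+1$ are $\delta$-pistoled, and such a twist swaps the positions of $i-1$ and $i+1$, hence can alter the color $i-2$ and color $i+2$ edges. I would examine how twisted colors accumulate inside a window of four consecutive colors, comparing the resulting graph against the catalogue of $\mg_\lambda$ restricted to short intervals (as in Figure~\ref{5graph}). Two adjacent twisted colors live among four consecutive values $a,\dots,a+3$, all $\delta$-pistoled; running through the relative orders of these four values should show the local picture fails to be a restriction of any $\mg_\lambda$ precisely for the arrangements $1342$ and $2431$, giving condition~(1). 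Three interacting colors live among five consecutive values; the local picture remains a legitimate $\mg_\lambda$-piece when all five cells share a pistol (a uniform twist) and when at least one length-four subwindow is not pistoled (at most one interior color is twisted), but it breaks when both length-four subwindows are pistoled while the full five are not, and tracing the reading orders should pin this obstruction to the patterns $12543$ and $34521$---exactly condition~(2).

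The hard part will be this last case analysis: one must enumerate, for windows of up to four colors, every way twisted colors can combine, check in each case whether the twisted graph is isomorphic to a restriction of some $\mg_\lambda$, and confirm both that a single twist never breaks the structure while the forbidden double and triple twists do, and that no genuinely four-color obstruction survives beyond those already detected at the two- and three-color level. Granting that this isolates exactly conditions~(1) and~(2), the Locally Standard Property holds for $\mg$ if and only if both conditions hold at every vertex; combined with the automatic Commuting Property and with Theorem~\ref{main}, this shows $\mg$ is a dual equivalence graph precisely under conditions~(1) and~(2).
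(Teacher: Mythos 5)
Your proposal follows essentially the same route as the paper: the Commuting Property is dispatched exactly as in the paper (disjointness of the value sets moved by $D_i^{\delta}$ and $D_j^{\delta}$ for $|i-j|>2$), necessity is established by exhibiting the bad local components attached to the four patterns, and sufficiency is reduced to a finite verification over short windows of consecutive values and their possible pistol structures --- which the paper carries out by computer over $S_6$ with its $132$ pistol groupings, and which you likewise defer ("granting that this isolates exactly conditions (1) and (2)"). The outline is sound and matches the paper's argument in structure and substance.
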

\begin{proof}
First assume that $\mg$ contains a vertex with one of the patterns described above. We will prove that $\mg$ is not a dual equivalence graph by showing that it does not satisfy the Locally Standard Property in Definition~\ref{properties}. Consider the smallest interval $I$ such that restricting to $\pi|_I$ gives a word that still contains one of the strict patterns. Here, $|I|=4 \textup{ or } 5$ depending on whether the pattern is in the first or second case above, respectively. We may then consider the component of $\pi$ in $\mg|_I$. There are four possibilities corresponding to the four strict patterns, 1342, 2431, 12543, and 3452. Direct inspection shows that each such component does not satisfy the Locally Standard Property, as demonstrated in Figure~\ref{bad graphs}.

\begin{figure}[h]
\[
\xymatrix{	
& 2314 \ar@{-}[r]^{2} & 3124  \ar@{-}[r]^{3} &2143 \ar@{-}[r]^{2} &1342 \ar@{-}[r]^{3} & 1423  & \\
& 4132 \ar@{-}[r]^{2} & 4213  \ar@{-}[r]^{3} &3412 \ar@{-}[r]^{2} &2431 \ar@{-}[r]^{3} & 3241  & \\
 21453\ar@{-}[r]^{4} \ar@{=}[d]_{2}^{3} & 21534 \ar@{-}[r]^{2} & 13524 \ar@{-}[d]^{3}\ar@{-}[r]^{4} & 14325 \ar@{-}[r]^{2} & 24135 \ar@{-}[r]^{4}\ar@{-}[d]^{3} & 23154 \ar@{-}[r]^{2} &31254 \ar@{=}[d]_{3}^{4}  \\ 
31452 &&12543 & & 32145 &&  41253
\\
 35412\ar@{-}[r]^{4} \ar@{=}[d]_{2}^{3} & 43512 \ar@{-}[r]^{2} & 42531 \ar@{-}[d]^{3}\ar@{-}[r]^{4} & 52341 \ar@{-}[r]^{2} & 53142 \ar@{-}[r]^{4}\ar@{-}[d]^{3} & 45132 \ar@{-}[r]^{2} &45213 \ar@{=}[d]_{3}^{4}  \\ 
25413 &&34521 & & 54123 && 35214
}
\]
\caption{From top to bottom, the components of 1342, 2431, 12543, and 34521 with edges given by $D_i$ and pistols as described in Lemma~\ref{DEG by pattern}. Signatures are omitted for clarity.}\label{bad graphs}
\end{figure}

To prove the other direction, assume that $\mg$ does not contain a vertex with one of the above patterns. We need only show that $\mg$ satisfies Definition~\ref{properties}. The proof of the Commuting Property follows from the fact that $D_i^\delta$ fixes all values except $i-1, i,$ and $i+1$. To demonstrate the Locally Standard Property, it suffices to check $\mh_{\delta}$ with $|\delta| = 6$. While it is possible to meticulously do this check by hand, it is more straightforward to verify the finitely many cases by computer. More specifically, it suffices to consider the possible graphs on $S_6$. Here, we must consider the 132 different ways of grouping the six indices into pistols separately, since the action of $D_i$ is dependent on the choice of these pistols. This finite check completes the proof.
\end{proof}

\begin{thm}\label{Halls are DEGs}
If $\gamma$ and $\delta$ are any diagrams such that $\gamma\subset\delta$, then $\mr_{\gamma,\delta}$ and $\mP_{\delta}$ are dual equivalence graphs.
\end{thm}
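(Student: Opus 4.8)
The plan is to reduce everything to Lemma~\ref{DEG by pattern}. Since $\inv_\delta$ and $\Descent_\delta$ are constant on the components of $\mh_\delta$, both $\mP_\delta$ and $\mr_{\gamma,\delta}$ are unions of connected components of $\mh_\delta$, and every one of their vertices satisfies $\inv_\delta(\pi)=0$. Because the two conditions in Definition~\ref{properties} are checked component by component, a signed colored graph is a dual equivalence graph exactly when each of its components is one, so it suffices to show that every component of $\mh_\delta$ lying in $\mP_\delta$ is a dual equivalence graph. By Lemma~\ref{DEG by pattern} this in turn reduces to the single implication
\[
\inv_\delta(\pi)=0 \;\Longrightarrow\; \pi \text{ satisfies conditions (1) and (2) of Lemma~\ref{DEG by pattern}.}
\]
Because $\mr_{\gamma,\delta}$ is obtained from $\mP_\delta$ by discarding whole components (those with $\Descent_\delta\neq\gamma$), the same implication settles both graphs at once.

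I would prove this implication by contraposition: assuming one of the forbidden configurations occurs, I would exhibit an explicit inversion triple or inversion pair, forcing $\inv_\delta(\pi)>0$. The geometric input is that a $\delta$-pistoled set of cells occupies at most two adjacent rows arranged in the characteristic pistol shape (a right-hand segment of one row followed by a left-hand segment of the row below, meeting in a single column). For condition (1), the four cells realizing a $\delta$-strict $1342$ or $2431$ pattern all lie in one pistol; I would run through the few ways those four cells split between the upper and lower row of the pistol, and in each layout check that the prescribed consecutive values $v,v+1,v+2,v+3$ satisfy one of the counterclockwise inequalities defining an inversion triple, or the left-larger-than-right inequality defining an inversion pair. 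Since the values are fixed by the pattern, each layout is a finite check.

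The hard part will be condition (2), the patterns $12543$ and $34521$. Here five cells are involved, and the hypothesis is the more delicate statement that the first four indices and the last four indices are each $\delta$-pistoled while all five are not. I would translate this into a statement about two overlapping pistol intervals $I_1\supseteq\{i_1,\dots,i_4\}$ and $I_2\supseteq\{i_2,\dots,i_5\}$ whose union lies in no single pistol, which forces the cell $i_5$ to fall beyond the end of the first pistol and the cell $i_1$ to fall before the start of the second. The subtlety, already present in condition (1) but more acute here, is that a cell appearing in the upper row of a pistol may have a cell directly below it that lies \emph{outside} the pistol, and the value of that below-cell can neutralize the ``obvious'' inversion; one must therefore locate an inversion whose three cells are genuinely pinned down by the pistol geometry and the pattern values. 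Because the offending configurations involve at most five cells sitting in a bounded number of two-row layouts, I expect this to again reduce to a finite verification --- indeed it could plausibly be discharged by the same computerized check over pistol groupings of $S_6$ used in Lemma~\ref{DEG by pattern} --- but isolating a clean, uniform inversion witness valid across all the layouts is the main obstacle.
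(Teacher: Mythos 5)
Your reduction is exactly the paper's: both $\mP_\delta$ and $\mr_{\gamma,\delta}$ are unions of components of $\mh_\delta$, every vertex satisfies $\inv_\delta(\pi)=0$, and by Lemma~\ref{DEG by pattern} everything rests on the single implication that a permutation exhibiting one of the forbidden pistoled patterns has $\inv_\delta(\pi)\geq 1$. The gap is that you stop exactly where the work is: you name ``isolating a clean, uniform inversion witness valid across all the layouts'' as the main obstacle and do not produce one, and your fallback --- re-running the computerized check over pistol groupings of $S_6$ --- does not apply here. That check verifies the Locally Standard Property for the graphs on $S_6$; what is needed now is an inversion triple or inversion pair inside $T_\delta(\pi)$ for arbitrary $n$, where the cells surrounding the pattern carry unconstrained values. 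Without first localizing the inversion to a bounded configuration there is nothing finite to check.

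The paper's localization is short and closes precisely the hole you identify. First, each of the four patterns ends in a descent as a word, so if its last two values share a row of $T_\delta(\pi)$ you are done at once: the cell completing the triple either is absent (inversion pair) or carries a value that cannot lie strictly between those two values --- by strictness the pattern values form a contiguous interval $I$ of values, every non-pattern cell carries a value outside $I$, and the pattern cells that could interfere are excluded by reading order --- so the triple is counterclockwise in every case. Otherwise the last pattern value shares its row with no other value of $I$; since the last four pattern indices are $\delta$-pistoled (this is the only part of the negation of condition (2) the paper uses), those four cells occupy a suffix of one row and a prefix of the row below, which forces the first three of them into the upper row and the last into the lower row. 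That leaves exactly the five two-row configurations of Figure~\ref{Hall Forbidden}, and in each the exhibited triple or pair is an inversion for the same reason as before: the one undetermined cell $x$ is either absent or holds a value outside $I$, hence below or above \emph{all} pattern values, and every option yields an inversion. This single observation about strict patterns is what dissolves the ``neutralizing below-cell'' worry you raise; your instinct that it is the crux was correct, but the argument has to be supplied rather than deferred.
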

\begin{proof}
 Let $\pi$ be some arbitrary vertex of $\mr_{\gamma,\delta}$ or $\mP_\delta$. 
 It suffices to show that if the hypotheses of Lemma~\ref{DEG by pattern} are not satisfied by a permutation $\pi$ and diagram $\delta$, then $\inv_\delta(\pi)\neq 0$. Let $T=T_\delta(\pi)$ such that $\pi$ contains one of the strict patterns mentioned in Lemma~\ref{DEG by pattern}. Let $\pi|_I$ be this pattern. We will use the location of $\pi|_I$ in $T$ to show that $\inv_\delta(\pi)\geq 1$.

First notice that as a word, $\pi|_I$ ends in a descent. If the last two values of $\pi|_I$ occur in the same row of $T$, they must be part of an inversion triple or an inversion pair, since the cell completing said triple cannot have a value in between the last two values in $\pi|_I$, by the definition of a strict pattern. We may thus restrict our attention to the case where the last value of $\pi|_I$ does not share its row in $T$ with any other value of $I$.

We will now demonstrate an inversion triple or inversion pair by ignoring any rows and columns that do not contain the last four values of $\pi|_I$. By the assumption that the last four values of $\pi|_I$ are contained in a pistol, we have restricted to a diagram with exactly two rows. We may then demonstrate an inversion triple or an inversion pair in all possible cases, as shown in Figure~\ref{Hall Forbidden}. 
Thus $\inv_\delta(\pi)\geq 1$, completing our proof.
\end{proof}
\begin{figure}[H] 
   \begin{center} 
   \ytableausetup {boxsize=normal,mathmode}
\ytableaushort{{\bf1}{\bf3}4,{\bf2}\none \none}
\hspace{.3in}
\ytableaushort{{\bf x}1{\bf3}4,{\bf2}\none\none\none}
\hspace{.3in}
\ytableaushort{\none2{\bf4}{\bf3},1\none{\bf x}\none}
\hspace{.3in}
\ytableaushort{\none2{\bf5}{\bf4},3\none{\bf x}\none}
\hspace{.3in}
\ytableaushort{\none4{\bf5}{\bf2},1\none{\bf x}\none}
   \end{center}
  \caption{Configurations for the last four values of $\pi|_I$ with inversion triples in bold. Here, numbers may be shifted by some nonnegative integer $k$, and the variable $x$ may represent an omitted cell or any value not in $I$.}
 \label{Hall Forbidden}
\end{figure}
%

%
%
%

\subsection{The proofs of Theorems~\ref{Hall Yam classification} and \ref{yam decomposition of graphs}}
We begin by giving several lemmas necessary for the proof of  Theorem~\ref{yam decomposition of graphs}. These lemmas, however, may be safely skipped without hindering the understanding of later results. In the proof of Lemmas~\ref{move n-1}--\ref{yam to yam}, we will use the variables $s$, $t$, $u$, and $v$ to denote vertices of graphs. Moreover, the vertices of graphs in the proofs of these lemmas will always be permutations.

\begin{lem}\label{move n-1}
Let $\gamma$ be a diagram such that $|\gamma|=n$, and let $\lambda\vdash n$ have at most two rows. Let $\pi\in\SYam_\gamma(\lambda)$ and  $u=\pi|_{[n-1]}$. Let $\delta$ be the diagram of $T_\gamma(\pi)\big|_{[n-1]}$. Then $u$ is connected by a path $p$ in $\mh_\delta$ to some vertex $v$ such that:
\begin{enumerate}
\item each edge of $p$ is defined via the action of $d_i$, 
\item$v_{n-1}=n-1$,
\item $u_i=v_i$ whenever $u_i<\lambda_1$. 
\end{enumerate}
Furthermore, given $\gamma$ and $\lambda$, there is a sequence of edge colors for $p$ that is not dependent on the choice of $\pi \in \SYam_\gamma(\lambda)$.
\end{lem}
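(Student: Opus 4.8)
The plan is to realize the desired path as the image, inside $\mh_\delta$, of a canonical path in the two-row standard dual equivalence graph $\mg_{(\lambda_1,\lambda_2-1)}$.

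First I would set the stage with R-S-K. Since $\lambda=(\lambda_1,\lambda_2)$ and $\pi\in\SYam(\lambda)$, the preliminaries give $u=\pi|_{[n-1]}\in\SYam((\lambda_1,\lambda_2-1))$, so $P(u)=U_{(\lambda_1,\lambda_2-1)}$. By Theorem~\ref{Haiman} together with \eqref{d and P}, the map $w\mapsto P(w)$ identifies the set of permutations sharing the recording tableau $Q(u)$, equipped with its $d_i$-edges, with $\mg_{(\lambda_1,\lambda_2-1)}$; because the shape has two rows this graph is a single path. I would take as target the vertex $V^{\ast}$ of shape $(\lambda_1,\lambda_2-1)$ whose first row is $1,2,\ldots,\lambda_1-1,n-1$ and whose second row is $\lambda_1,\ldots,n-2$, and let $v$ be the permutation with $P(v)=V^{\ast}$ and $Q(v)=Q(u)$.

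Next I would read off the colors. In the path $\mg_{(\lambda_1,\lambda_2-1)}$ there is a unique repetition-free walk from $U_{(\lambda_1,\lambda_2-1)}$ to $V^{\ast}$; choosing one color at each multiple edge yields a list $\vec c$ of edge colors. Since every $\pi\in\SYam_\gamma(\lambda)$ produces a $u$ with the same image $P(u)=U_{(\lambda_1,\lambda_2-1)}$ and the same target $V^{\ast}$, and since $\vec c$ is computed entirely inside the fixed graph $\mg_{(\lambda_1,\lambda_2-1)}$, the sequence $\vec c$ depends only on $\lambda$; this is the ``furthermore'' assertion. Conclusion (3) follows by comparing endpoints: $U_{(\lambda_1,\lambda_2-1)}$ and $V^{\ast}$ place every value $<\lambda_1$ in the identical cell, and with $Q(v)=Q(u)$ reverse insertion then forces these values into identical positions of $u$ and $v$. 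Conclusion (2), that $v_{n-1}=n-1$, amounts to the cell of $Q(u)$ holding its largest entry lying in the first row; I would obtain this from the hypothesis that $u$ arises as $\pi|_{[n-1]}$ for an actual $\pi\in\SYam(\lambda)$ (genuinely stronger than $u\in\SYam((\lambda_1,\lambda_2-1))$), which pins the last letter of $u$ to the first row of its insertion tableau.

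The heart of the argument, and the step I expect to be the main obstacle, is transporting the abstract walk $\vec c$ into $\mh_\delta$ using only genuine $d_i$-edges, as required by conclusion (1). Following $\vec c$ from $u$, the move of color $c$ is the unwanted $\tilde d_c$ exactly when the three values $c-1,c,c+1$ lie in a single pistol of $\delta$ in the current filling $T_\delta$; otherwise it is $d_c$ and keeps us on the abstract path. I would argue by induction along the walk: as long as the previous moves were all $d_i$, the current vertex and its filling match those prescribed by $\mg_{(\lambda_1,\lambda_2-1)}$, so the two-row shape leaves only a short, explicit list of relative configurations for the cells of $c-1,c,c+1$. For each I would show these cells cannot all be $\delta$-pistoled: since $\delta$ is $\gamma$ with the single cell of the maximal value $n$ deleted and $c+1\le n-1$, such a pistol of $\delta$ lifts to a pistol of $\gamma$, and translating through $\st$ and $\unst$ this exhibits, for some $i$ and $j$, the $j^{th}$-from-last $i$ and the $(j{+}1)^{th}$-from-last $i+1$ of $\unst(\pi)$ as $\gamma$-pistoled, so that $\pi$ jams $\gamma$, contradicting $\pi\in\SYam_\gamma(\lambda)$. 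The delicate bookkeeping is precisely this conversion between the standardized triple $c-1,c,c+1$ and the ``from-last'' data of the Yamanouchi word, together with controlling how deleting the $n$-cell alters pistols; the two-row hypothesis is what keeps the list of configurations to check finite and small.
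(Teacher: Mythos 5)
Your high-level plan (fix a target insertion tableau $V^{\ast}$, pull a canonical walk out of the standard graph $\mg_{(\lambda_1,\lambda_2-1)}$, and transport it into $\mh_\delta$) differs from the paper, which instead builds the path recursively: it inducts on $\lambda_2$, uses the inductive hypothesis to drag $n-2$ into the last position and $n-3$ into position $n-2$ via a subpath $q$, applies a single $(n-2)$-edge, and then applies $q^{-1}$. That architecture is not incidental --- it is what makes the two hard conclusions cheap, and your route has genuine gaps at exactly those two points. First, your justification of conclusion (3) --- that $Q(v)=Q(u)$ together with $P(u)\big|_{[\lambda_1-1]}=P(v)\big|_{[\lambda_1-1]}$ forces the values below $\lambda_1$ into the same positions --- is false as a general principle: take $u=132$ and $v=231$, which share the recording tableau with first row $12$ and second row $3$, and whose insertion tableaux both restrict to the single cell containing $1$, yet the value $1$ sits in position $1$ of $u$ and position $3$ of $v$. (This is precisely the $\lambda=(2,2)$ instance of your $U$ and $V^{\ast}$ with the ``wrong'' recording tableau; the hypothesis $u=\pi|_{[n-1]}$ happens to exclude it, but your argument for (3) never invokes that hypothesis.) The paper gets (3) essentially for free because its path ends with $q^{-1}$, which restores the small values to where $q$ found them. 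Also, a side claim you lean on is false: two-row standard dual equivalence graphs need not be paths --- $\mg_{(3,3)}$ contains a $4$-cycle on the tableaux with reading words $356124$, $346125$, $246135$, $256134$ --- though this only costs you the uniqueness of the walk, not its existence.

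The more serious gap is conclusion (1). Your sketch asserts that if at some intermediate vertex the cells of $c-1,c,c+1$ were all $\delta$-pistoled, then unwinding through $\st$ and $\unst$ would exhibit a jamming of $\pi$. But the non-jamming hypothesis only forbids a very specific list of position pairs --- for two-row $\lambda$ these are the pairs $(\,\text{position of }\lambda_1-j+1,\ \text{position of }n-j\,)$ in $\pi$, i.e.\ the set $A_\lambda$ of Remark~\ref{yam tree} --- whereas an arbitrary walk from $U_{(\lambda_1,\lambda_2-1)}$ to $V^{\ast}$ passes through vertices where $c-1,c,c+1$ occupy positions that need not project onto any forbidden pair. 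You would have to choose the walk so that every potentially pistoled triple it encounters reduces to a pair in $A_\lambda$, and nothing in the proposal guarantees such a walk exists or identifies one. The paper's recursion is engineered precisely so that only \emph{one} edge per inductive level requires the hypothesis (the $(n-2)$-edge swapping $n-1$ into the last index), and for that edge the relevant pair is exactly the last index of $\pi$ and the index of $n-1$, which is forbidden from being pistoled by the $j=1$ case of Definition~\ref{jammed}; all other edges are covered by the inductive hypothesis applied to restrictions. Without recovering some version of that reduction, your transport step does not go through.
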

\begin{proof}
Before beginning, it is worth giving a further description of $u$ and $v$ when $\lambda_2>1$. An example of such a $u$,  $v$, and $p$ is in Figure~\ref{specific sequence}. Because $u$ is the result of removing $n$ from a standardized Yamanouchi word, $\unst(u)$ will always have strictly more $1$'s than $2$'s when read one value at a time from right to left. Meanwhile, $\unst(v)$ is the result of replacing $\unst(u)_{n-1}$ with a 2. Row inserting $u$ gives the two row tableau with values of $[1, \lambda_1]$ in the first row, while row inserting $v$ gives the two row tableau with values of $[1, \lambda_1-1]\cup \{n-1\}$ in the first row. Recalling that the action of $d_i$ on $P(u)$ can be understood via (\ref{d and P}), the path $p$ to be described in this proof may be portrayed via insertion tableaux, as in Figure~\ref{move n-1 fig}.

We proceed by induction on $\lambda_2$. If $\lambda_2\leq 1$, then $u$ must be the identity permutation $123\cdots n-1$, since $\pi \in \SYam(\lambda)$. We may then let $u=v$ in order to satisfy the result. For the inductive step, suppose that $\lambda_2>1$ and that the result holds for all two row partitions whose second row is smaller than $\lambda_2$.  

We will apply a sequence of edges to find $v$. It follows from the fact that $\pi \in \SYam(\lambda)$ and $\lambda_2 \geq 2$ that $u_{n-2}=\lambda_1-1$ and $u_{n-1}=\lambda_1$. We may apply the inductive hypotheses by replacing $\gamma$ and $\pi$ with $\delta$ and $u$ in the statement of the lemma.  
By induction, move $n-2$ into position $n-1$ of $u$. Call this vertex $u'$. 

Similarly restricting to values in $[n-3]$, we may then move $n-3$ into position $n-2$ via some path $q$. 
This application of Lemma~\ref{move n-1} is more intricate. Specifically, we may let $\pi$ be the result of omitting $n-2$ in $u'$ and then replacing $n-1$ with $n-2$. That is, $\unst(\pi)$ is the result of omitting the furthest right 1 and 2 from $\unst(u)$, and the underlying diagram $\gamma'$ is the result of omitting the two cells of $\gamma$ containing these values.  Thus $\pi\in\SYam_{\gamma'}(\lambda_1-1, \lambda_2-1)$. 

We have not changed the index of $n-1$, and so $n-1$ must now occur before $n-3$, which occurs before the $n-2$ in the last index. Applying an $n-2$-edge thus moves $n-1$ into the last index. Since the last index of $\pi$ does not jam a pistol of $\gamma$, 
 the last index of $u$ and the index of $n-1$ in $u$ cannot be $\delta$-pistoled. In particular, the $n-2$-edge must be defined via $d_i$. Finally, we may consider the restriction to values in $[n-3]$ again to apply the edges of $q$ in reverse order, ensuring $u_i=v_i$ whenever $u_i<\lambda_1$. The result is the desired $v$, as given by applying a sequence of edges that was not dependent on the choice of $\pi\in\SYam(\lambda)$.
\end{proof}
%
\begin{figure}[h]
\ytableausetup{aligntableaux=center,smalltableaux=on}
\begin{center}$
\xymatrix{
T_\gamma(\pi)= \ytableaushort{51,67,234} & T_\delta(u)=\hspace{-.35in}& \ytableaushort{51,6,234} \ar@{=}[r]^4_5&  \ytableaushort{41,6,235} \ar@{-}[r]^3&  \ytableaushort{31,6,245} \ar@{-}[r]^5&  \ytableaushort{31,5,246}\ar@{=}[r]^3_4& \ytableaushort{41,5,236}& \hspace{-.35in}=T_\delta(v)
}$
\end{center}
 \ytableausetup{aligntableaux=center,boxsize=.3in}
\caption{At left, an example of $\pi\in \SYam_\gamma((4, 3))$ from Lemma~\ref{move n-1}, viewed as a filling of $\gamma=(3, 2, 2)$. At right, a path connecting $u$ and $v$, viewed as fillings of $\delta$. The first 3-edge is $q$, and the second is $q^{-1}$.}
\label{specific sequence}
\end{figure}
%
%
\begin{figure}[h]
\ytableausetup{aligntableaux=center,boxsize=.3in}
\scalebox{.8}{
\xymatrix{	
	P(u)=\hspace{-.3in}&\ytableaushort{{\scriptstyle \lambda_1+1}{ \scriptstyle \lambda_1+2}{}{\scriptstyle n-2}{\scriptstyle n-1},12{}{}{\scriptstyle \lambda_1-1}{\scriptstyle \lambda_1}}
	 \,\ar@{--}[r]&
	 \ytableaushort{{\scriptstyle \lambda_1}{ \scriptstyle \lambda_1+1}{}{\scriptstyle n-3}{\scriptstyle n-1},12{}{}{\scriptstyle \lambda_1-1}{\scriptstyle n-2}} 
	 \,\ar@{--}[r]^{q}&
	 \ytableaushort{{\scriptstyle \lambda_1-1}{ \scriptstyle \lambda_1}{}{\scriptstyle n-4}{\scriptstyle n-1},12{}{}{\scriptstyle n-3}{\scriptstyle n-2}}
&\\ &
	 \ytableaushort{{\scriptstyle \lambda_1-1}{ \scriptstyle \lambda_1}{}{\scriptstyle n-4}{\scriptstyle n-1},12{}{}{\scriptstyle n-3}{\scriptstyle n-2}}
	 \,\ar@{-}[r]^{n-2} &
	 \ytableaushort{{\scriptstyle \lambda_1-1}{ \scriptstyle \lambda_1}{}{\scriptstyle n-4}{\scriptstyle n-2},12{}{}{\scriptstyle n-3}{\scriptstyle n-1}}
	  \,\ar@{--}[r]^{q^{-1}} &
	  \ytableaushort{{\scriptstyle \lambda_1}{ \scriptstyle \lambda_1+1}{}{\scriptstyle n-3}{\scriptstyle n-2},12{}{}{\scriptstyle \lambda_1-1}{\scriptstyle n-1}} & \hspace{-.3in}=P(v)
	 }} 
\caption{The sequence of edges connecting $P(u)$ and $P(v)$.}
\label{move n-1 fig}
\end{figure}

\begin{lem}\label{yam to yam induction}
Given diagrams $\gamma$ and $\delta$ such that $|\gamma|=|\delta|=n$, let $\mg$ and $\mh$ be connected components of $\mh_\gamma$ and $\mh_{\delta}$, respectively.  Suppose that there exists an isomorphism $\phi \colon \mg\to \mh$.  Let $\lambda\vdash n$ have exactly two rows, and let $s$ be a vertex of $\mg$ such that $s\in \SYam_\gamma(\lambda)$ and $\phi(s)|_{[n-1]}\in \SYam_{\delta'}(\lambda_1, \lambda_2-1)$, where $\delta'$ is the shape of $T_\delta(\phi(s))|_{[n-1]}$. Then $\phi(s)\in \SYam_\delta(\lambda)$.
\end{lem}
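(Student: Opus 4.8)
The plan is to write $w=\phi(s)$ and verify the two conditions defining $\SYam_\delta(\lambda)$ separately: that $P(w)=U_\lambda$ (equivalently $w\in\SYam(\lambda)$), and that $w$ does not jam $\delta$. The isomorphism $\phi$ preserves signatures and colored edges, so in particular $\sigma(w)=\sigma(s)$. Since $w|_{[n-1]}\in\SYam_{\delta'}(\lambda_1,\lambda_2-1)$ we have $P(w|_{[n-1]})=U_{(\lambda_1,\lambda_2-1)}$, and by the R-S-K property that removing $n$ from $P(w)$ returns $P(w|_{[n-1]})$, the tableau $P(w)$ must be $U_{(\lambda_1,\lambda_2-1)}$ with a single box containing $n$ added at an addable corner. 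Thus $P(w)$ has shape $(\lambda_1+1,\lambda_2-1)$, $(\lambda_1,\lambda_2)$, or $(\lambda_1,\lambda_2-1,1)$.

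To narrow this down I would use that the signature of a permutation is determined by the descent set of its insertion tableau: because $\sigma$ is defined via inverse descents and $\Descent(\pi^{-1})=\Descent(P(\pi))$, one has $\sigma(\pi)_i=+$ exactly when $i$ is not a descent of $P(\pi)$. From $\sigma(w)=\sigma(s)$ and $P(s)=U_\lambda$ it follows that $P(w)$ and $U_\lambda$ share the same descent set, namely $\{\lambda_1\}$. The three-row shape $(\lambda_1,\lambda_2-1,1)$ would create an extra descent at $n-1$, so it is eliminated. The two remaining two-row shapes both have descent set $\{\lambda_1\}$, so signatures cannot separate them, and ruling out the spurious shape $(\lambda_1+1,\lambda_2-1)$ is the crux of the argument.

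For this I would exploit the uniform color sequence furnished by Lemma~\ref{move n-1}. Applied to $s\in\SYam_\gamma(\lambda)$, it produces a sequence of colors, all at most $n-2$ and all realized by genuine $d_i$ moves, carrying $s|_{[n-1]}$ to a canonical word in which the value $n-1$ occupies the last position; since these colors never involve the value $n$, the sequence lifts to a $d_i$-path in $\mh_\gamma$ that fixes the position of $n$. Transporting this sequence of colors through $\phi$ produces a path out of $w$ in $\mh_\delta$, while Lemma~\ref{move n-1} applied on the $\delta$ side to $w|_{[n-1]}\in\SYam_{\delta'}(\lambda_1,\lambda_2-1)$ realizes the matching canonical move there, the non-jamming of $w|_{[n-1]}$ in $\delta'$ guaranteeing that those edges are again $d_i$ moves. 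Once $n-1$ has been brought to the last position among $\{1,\dots,n-1\}$, the three values $n-2,n-1,n$ are arranged so that the lone color-$(n-1)$ edge directly records whether $\{n-2,n-1,n\}$ is $\delta$-pistoled, that is, whether $D^\delta_{n-1}$ acts by $d_{n-1}$ or by $\tilde d_{n-1}$. Matching this against the corresponding edge on the $\gamma$ side, where $s$ does not jam $\gamma$, forces $n$ into the top row of $P(w)$, giving $P(w)=U_\lambda$, and simultaneously shows that the pair formed by $n$ and the relevant earlier letter is not $\delta$-pistoled, which is exactly the statement that $w$ does not jam $\delta$.

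The main obstacle I anticipate is precisely this reconciliation of edge types across $\phi$: the isomorphism matches only colors and signatures, whereas whether a color-$i$ edge is the dual equivalence $d_i$ or the cyclic $\tilde d_i$ is dictated by the pistol data of $\gamma$ versus $\delta$, and these differ. The hypotheses that $s$ does not jam $\gamma$ and that $w|_{[n-1]}$ does not jam $\delta'$ are exactly what I would use to force the transported path to consist of $d_i$ edges on both sides and to isolate the single color-$(n-1)$ edge that detects the jam newly created by the value $n$. Additional care will be needed in tracking how deleting the cell of $n$ to form $\delta'$ alters the pistol structure, since the jamming conditions for $w$ and for $w|_{[n-1]}$ are shifted by one in the count of $2$'s and so are not literally the same condition.
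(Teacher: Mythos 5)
Your overall strategy---transporting a canonical sequence of edge colors supplied by Lemma~\ref{move n-1} through $\phi$, plus the observation that $\sigma$ determines the descent set of the insertion tableau and so eliminates the three-row shape $(\lambda_1,\lambda_2-1,1)$---is close to the paper's, but the mechanism you propose for the crux does not work as stated. You claim that, once $n-1$ is brought into position, ``the lone color-$(n-1)$ edge directly records'' whether $D_{n-1}^{\delta}$ acts by $d_{n-1}$ or by $\tilde d_{n-1}$. It does not: an isomorphism of signed colored graphs preserves only colors, signatures, and adjacency, and a single $i$-colored edge produced by $d_i$ is indistinguishable from one produced by $\tilde d_i$ (both connect two vertices by one $i$-edge and both flip $\sigma_{i-1}\sigma_i$ between $-+$ and $+-$). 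So nothing visible through $\phi$ tells you which case occurs on the $\delta$ side; this is exactly the obstacle you flag but do not resolve. The paper's resolution is to move \emph{both} $n-2$ and $n-3$ into position so that the resulting vertex $u$ lies on an $n-2/n-1$-\emph{double} edge. A double edge is a genuine graph-theoretic invariant; since $D_i$ fixes $i+2$ and $D_{i+1}$ fixes $i-1$, a double edge must swap $i$ and $i+1$ and hence certifies that their indices are not pistoled. Transporting that double edge to $\phi(u)$ is what forces $n$ into the second row of $P(\phi(s))$ and rules out the shape $(\lambda_1+1,\lambda_2-1)$.

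The second gap is in the non-jamming half. For a two-row word, jamming involves the pair formed by the $j^{th}$ from last $1$ and the $(j+1)^{th}$ from last $2$ for \emph{every} $j$, and the value $n$ (the rightmost $2$ of $\unst(\phi(s))$) accounts only for the $j=1$ pair; the hypothesis on $\phi(s)|_{[n-1]}$ does not directly control the remaining pairs, because deleting $n$ shifts the count of $2$'s, as you yourself note. The paper's proof is an induction on $\lambda_2$: the $j=1$ pair is handled by the double edge, and the pairs with $j\geq 2$ are handled by passing to a vertex $v$ whose restriction $v|_{[n-2]}$ lies in $\SYam(\lambda_1-1,\lambda_2-1)$ (removing the rightmost $1$ and the rightmost $2$ simultaneously) and invoking the inductive hypothesis on $\phi(v)|_{[n-2]}$. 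Your proposal sets up no induction at all, so even granting the edge-type detection, it would establish only one of the required non-pistoled conditions.
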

\begin{proof}
Let $s$ be as in the statement of the lemma. We proceed by induction on the size of $\lambda_2$. If $\lambda_2 = 1$, then it follows from the hypothesis that $s\in\SYam(\lambda)$ and the fact that the isomorphism $\phi$ preserves signatures that $\unst(\phi(s))\in\Yam(\lambda)$ and has exactly one 2. Hence, $\phi(s)$ cannot jam $\delta$, by Definition~\ref{jammed}, and $\phi(s)\in\SYam_\delta(\lambda)$.

Next assume that $\lambda_2\geq 2$. Applying induction, we further assume the result for all two row partitions whose second row has fewer than $\lambda_2$ cells. We break the proof into two parts: proving that $\phi(s)\in\SYam(\lambda)$ and then that $\phi(s)$ does not jam $\delta$.
\\

\noindent
\emph{Proof that $\phi(s)\in\SYam(\lambda)$}:
By hypothesis, we know that $\phi(s)|_{[n-1]}\in\SYam(\lambda_1,\lambda_2-1)$, but we need to show that $\phi(s)\in\SYam(\lambda)$. It thus suffices to show that $n$ is in the second row of $P(\phi(s))$. We do this by first finding a particular vertex $u$ in $\mg$ such that $n$ is known to be in the second row of $P(\phi(u))$, as shown via example in Figure~\ref{big specific sequence}. All of the edges described in the proof are defined via the action of $d_i$, so (\ref{d and P}) allows us to view them via insertion tableaux in Figure~\ref{sequence of edges}.

By considering the restriction to values in $[n-2]$, we may apply Lemma~\ref{move n-1} to move $n-2$ into the index of $\lambda_1$ in $s$, reaching some vertex $t$. Specifically,  $s|_{[n-1]}\in \SYam_{\gamma'}((\lambda_1, \lambda_2-1))$, where $\gamma'$ is the diagram of $T_\gamma(s)|_{[n-1]}$, so we may let $s|_{[n-1]}$ be $\pi$ in the statement of Lemma~\ref{move n-1}.

 Next, consider the restriction of $t$ to values in $[n-3]$ to similarly move $n-3$ into the index of $\lambda_1-1$, reaching some vertex $u$. This application of Lemma~\ref{move n-1} is more involved. Specifically, we may let $\pi$ be the result of omitting $n-2$ and $n$ in $t$ and then replacing $n-1$ with $n-2$. That is, $\unst(\pi)$ is the result of omitting the furthest right 1 and 2 from $\unst(s)$, and the underlying diagram $\gamma'$ is the result of omitting the two cells of $\gamma$ containing these values.  Thus $\pi\in\SYam_{\gamma'}(\lambda_1-1, \lambda_2-1)$. 
 
Next, we show that $u$ is contained in an $n-2/n-1$-double edge.  Notice that $s$ is connected to $u$ by a sequence of edges whose labels are less than $n-2$. In particular, $n-1$ and $n$ are not moved. Thus, the indices of $n-2$ and $n-1$ in $u$ are not $\gamma$-pistoled, since the index of $\lambda_1$ in $s$ does not jam $\gamma$. Furthermore, $n$ and $n-3$ are between $n-2$ and $n-1$ in $u$, so $u$ must be contained in an $n-2/n-1$-double edge. 

Now consider the effect of the same sequence of edges on $\phi(s)$. In connecting $s$ to $u$, we only applied Lemma~\ref{move n-1} to $s|_{[n-1]}$ and $t|_{[n-2]}$. By the hypotheses of this lemma, $\phi(s)|_{[n-1]}$ also satisfies the hypotheses of $\pi$ in the statement of Lemma~\ref{move n-1}. Thus, $\phi(s)$ is connected to $\phi(u)$ by a sequence of edges that are defined via $d_i$, move $n-2$ to the index of $\lambda_1$, and do not move $n-1$ or $n$. Because isomorphisms preserve edges, $\phi(u)$ must also admit an $n-2/n-1$-double edge. By hypothesis, (\ref{d and P}), $P(s)$ and $P(u)$ have the same shape and have $n$ in the same cell. Since $P(s|_{[n-1]})$ and $P(u|_{[n-1]})$ have at most two rows, the only way $P(u)$ could admit an $n-2/n-1$-double edge is if $n$ is in the second row. Thus, $P(\phi(s))$ has $n$ in the second row and $\phi(s)\in\SYam(\lambda)$.
\\

%
\begin{figure}[h]
\ytableausetup{aligntableaux=center,smalltableaux=on}
\[\begin{array}{l}
\xymatrix{
T_\gamma(s)=  \hspace{-.35in} &\ytableaushort{51,678,234} \ar@{=}[r]^4_5&  \ytableaushort{41,678,235} \ar@{-}[r]^3&  \ytableaushort{31,678,245} \ar@{-}[r]^5&  \ytableaushort{31,578,246}\ar@{=}[r]^4_3& \ytableaushort{41,578,236}& \hspace{-.35in}=T_\gamma(t)\\
}\\
\xymatrix{	
T_\gamma(t)=  \hspace{-.35in} &\ytableaushort{41,578,236}\ar@{=}[r]^3_4 & \ytableaushort{31,578,246} \ar@{-}[r]^2& 
\ytableaushort{21,578,346} \ar@{-}[r]^4 & \ytableaushort{21,478,356} \ar@{=}[r]^2_3& \ytableaushort{31,478,256}  &\hspace{-.35in}=T_\gamma(u)
}\\
\xymatrix{	
T_\gamma(t)=  \hspace{-.35in} &\ytableaushort{41,578,236}\ar@{-}[r]^{7} & \ytableaushort{41,568,237} & \hspace{-.35in}=T_\gamma(v)\\
}
\end{array}\]
 \ytableausetup{aligntableaux=center,boxsize=.3in}
\caption{Specific examples of $s\in\SYam_\gamma((4, 4))$, $t$, $u$, and $v$ from Lemma~\ref{yam to yam induction}, viewed as fillings of $\gamma=(3, 3, 2)$.}
\label{big specific sequence}
\end{figure}
%
\begin{figure}[h]
\vspace{-.5in}
\ytableausetup{aligntableaux=center,boxsize=.3in}
\[ \begin{array}{l}
\scalebox{.92}{
\xymatrix{	
	\raisebox{-.5in}[-.5in]{$P(s)$}&&\raisebox{-.5in}[-.5in]{$P(t)$}\\
	\ytableaushort{{\scriptstyle \lambda_1+1}{ \scriptstyle \lambda_1+2}{}{\scriptstyle n-1}{\scriptstyle n},12{}{}{\scriptstyle \lambda_1-1}{\scriptstyle \lambda_1}}
	 \,\ar@{--}[rr]
	 &&
	 \ytableaushort{{\scriptstyle \lambda_1}{ \scriptstyle \lambda_1+1}{}{\scriptstyle n-1}{\scriptstyle n},12{}{}{\scriptstyle \lambda_1-1}{\scriptstyle n-2}} 
	 }} \vspace{-.4in}
	 \\
\scalebox{.92}{
\xymatrix{	
	 \raisebox{-.5in}[-.5in]{$P(t)$}&&\raisebox{-.5in}[-.5in]{$P(u)$}\\
	  \ytableaushort{{\scriptstyle \lambda_1}{ \scriptstyle \lambda_1+1}{}{\scriptstyle n-1}{\scriptstyle n},12{}{}{\scriptstyle \lambda_1-1}{\scriptstyle n-2}}
	 \,\ar@{--}[rr]
	 && 
	 \ytableaushort{{\scriptstyle \lambda_1-1}{ \scriptstyle \lambda_1}{}{\scriptstyle n-1}{\scriptstyle n},12{}{}{\scriptstyle n-3}{\scriptstyle n-2}}
	 }}\vspace{-.4in}
\\
\scalebox{.92}{
\xymatrix{	
	\raisebox{-.5in}[-.5in]{$P(t)$}&\raisebox{-.5in}[-.5in]{$P(v)$}&& \raisebox{-.5in}[-.5in]{$P(v|_{[n-2]})$}\\
	 \ytableaushort{{\scriptstyle \lambda_1}{ \scriptstyle \lambda_1+1}{}{\scriptstyle n-1}{\scriptstyle n},12{}{}{\scriptstyle \lambda_1-1}{\scriptstyle n-2}}
	 \,\ar@{-}[r]^{n-1}
	 &
	 \ytableaushort{{\scriptstyle \lambda_1}{ \scriptstyle \lambda_1}{}{\scriptstyle n-2}{\scriptstyle n},12{}{}{\scriptstyle \lambda_1-1}{\scriptstyle n-1}}
	 &&
	  \ytableaushort{{\scriptstyle \lambda_1}{ \scriptstyle \lambda_1}{}{\scriptstyle n-2},12{}{}{\scriptstyle \lambda_1-1}}
	 }}
	 \end{array}\]
\caption{The relationships between the standard Young tableaux $P(s)$, $P(t)$, $P(u)$, $P(v)$, and $P(v|_{[n-2]})$.}
\label{sequence of edges}
\end{figure}

\noindent
\emph{Proof that $\phi(s)$ does not jam $\delta$}:
Because $\lambda$ has two rows, all values of $s$ weakly less than $\lambda_1$ are taken to $1$ in $\unst(\phi(s))$ and all values greater than $\lambda_1$ are taken to 2 in $\unst(\phi(s))$. Applying Definition~\ref{jammed}, it thus suffices to show that the indices of $\phi(s)$ with values weakly less than $\lambda_1$ do not jam $\delta$. We proceed by continuing our analysis of the vertices $t$ and $u$ described above.

Consider the $n-2/n-1$-double edge containing $\phi(u)$. It follows from (\ref{def: D}) that a permutation $\pi$ can only be contained in an $i/i+1$-double edge of $\mh_\delta$ if the indices of $i$ and $i+1$ in $\pi$ are not $\delta$-pistoled. Specifically, $D_i$ and $D_{i+1}$ fix $i+2$ and $i-1$, respectively, and so an $i/i+1$-double edge must act by switching the locations of $i$ and $i+1$. This is only the case when $i$ and $i+1$ are not $\delta$-pistoled. 
In particular, the indices of $n-2$ and $n-1$ in $\phi(u)$ are not $\delta$-pistoled. 
Thus, the indices of $\lambda_1$ and $n-1$ in $\phi(s)$ are not $\delta$-pistoled. 

We still need to show that no index of $\phi(s)$ with value strictly less than $\lambda_1$ can jam $\delta$. Let $v$ be the vertex connected to $t$ by an $n-1$-edge to move $n-1$ into the last index, portrayed via example in Figure~\ref{big specific sequence} and via insertion tableaux in Figure~\ref{sequence of edges}. Applying the above analysis of the pistols of $\gamma$, this $n-1$ edge also acts via $d_i$, swapping $n-2$ and $n-1$. The same must also be true for the $n-1$-edge connecting $\phi(t)$ and $\phi(v)$.
Notice that $v|_{[n-2]}\in\SYam((\lambda_1-1,\lambda_2-1))$, as shown by Figure~\ref{sequence of edges}. Further, $\unst(v|_{[n-2]})$ is the result of removing the furthest right 1 and 2 from $\unst(s)$, so $v|_{[n-2]}\SYam_{\gamma'}(\lambda_1-1, \lambda_2-1)$, where $\gamma^\prime$ is the diagram of $T_\gamma(v)|_{[n-2]}$. By our inductive hypothesis, $\phi(v)|_{[n-2]}$ does not jam $\delta^\prime$, where $\delta^\prime$ is the diagram of $T_\delta(\phi(v))|_{[n-2]}$. Comparing $\phi(s)$ to $\phi(v)$, it follows that no index of $\phi(s)$ with value less than $\lambda_1$ can jam $\delta$. Hence, $\phi(s)$ does not jam $\delta$.
\end{proof}

\begin{lem}\label{yam to yam}
Let $\mg$ and $\mh$ be connected components of $\mh_\gamma$ and $\mh_{\delta}$, respectively. Suppose that there exists an isomorphism $\phi \colon \mg\to \mh$. If $s$ is a vertex of $\mg$ such that $s\in \SYam_\gamma(\lambda)$, then $\phi(s)\in \SYam_{\delta}(\lambda)$.
\end{lem}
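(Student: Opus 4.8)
The plan is to induct on $n=|\lambda|$ and reduce every case to the two-row situation already settled by Lemma~\ref{yam to yam induction}. The structural engine is restriction to an interval $I$ (Definition~\ref{I restriction}). The key observation is that $D^\gamma_a$ permutes only the values $a-1,a,a+1$, so the colors retained in $\mg|_I$ move only values lying in the window $I$; every value outside $I$ is frozen on each connected component of $\mg|_I$. Hence the shape of $T_\gamma(\pi)$ after deleting the cells carrying the frozen outside-values is constant on a component, and the relabeling $\pi\mapsto\pi|_I$ (shifting values down by $\min(I)-1$) identifies each component of $\mg|_I$ with a component of $\mh_{\gamma_I}$ for a well-defined sub-diagram $\gamma_I$. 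Since $\phi$ preserves all colors and signatures, it restricts to an isomorphism $\mg|_I\to\mh|_I$, and therefore, after relabeling, to an isomorphism $\phi_I$ carrying the component of $s|_I$ in $\mh_{\gamma_I}$ onto the component of $\phi(s)|_I$ in $\mh_{\delta_I}$, with $\phi_I(s|_I)=\phi(s)|_I$.

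First I would dispose of the case $\lambda$ has at most two rows. Taking $I=[n-1]$ freezes the value $n$; by the RSK facts recorded in Section~\ref{Preliminaries}, together with the fact that deleting the cell of $n$ only removes pistol constraints, $s|_{[n-1]}\in\SYam_{\gamma'}(\lambda_1,\lambda_2-1)$ where $\gamma'$ is the shape of $T_\gamma(s)|_{[n-1]}$. The induction hypothesis applied to $\phi'=\phi_{[n-1]}$ then gives $\phi(s)|_{[n-1]}\in\SYam_{\delta'}(\lambda_1,\lambda_2-1)$, which is precisely the extra hypothesis demanded by Lemma~\ref{yam to yam induction}; that lemma yields $\phi(s)\in\SYam_\delta(\lambda)$. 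The one-row case is immediate, as the component is a single vertex.

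For $\lambda$ with $k\ge 3$ rows I would argue pairwise. Writing $\Lambda_j=\lambda_1+\cdots+\lambda_j$, set $I_i=\{\Lambda_{i-1}+1,\dots,\Lambda_{i+1}\}$ for $1\le i\le k-1$, the interval of values occupying rows $i$ and $i+1$ of $U_\lambda$. Each $I_i$ has size $\lambda_i+\lambda_{i+1}<n$, and $s|_{I_i}$ (relabeled) lies in $\SYam_{\gamma_i}(\lambda_i,\lambda_{i+1})$, so the induction hypothesis applied to $\phi_{I_i}$ gives $\phi(s)|_{I_i}\in\SYam_{\delta_i}(\lambda_i,\lambda_{i+1})$. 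I would then assemble these two-row conclusions. Because $\unst$ is computed from the signature, which $\phi$ preserves, $\unst(\phi(s))$ has content $\lambda$; and $\phi(s)|_{I_i}\in\SYam(\lambda_i,\lambda_{i+1})$ is equivalent to the subword of $\unst(\phi(s))$ on the two letters $\{i,i+1\}$ being a ballot word. Since a reverse-lattice word is exactly one whose restriction to each adjacent pair of letters is ballot, it follows that $\unst(\phi(s))\in\Yam(\lambda)$, whence $\phi(s)=\st(\unst(\phi(s)))\in\SYam(\lambda)$. Likewise, any jam of $\phi(s)$ in $\delta$ involves some adjacent pair $(i,i+1)$ and would be witnessed by a jam of $\phi(s)|_{I_i}$ in $\delta_i$, contradicting $\phi(s)|_{I_i}\in\SYam_{\delta_i}(\lambda_i,\lambda_{i+1})$. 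Hence $\phi(s)\in\SYam_\delta(\lambda)$, completing the induction.

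The step I expect to be the main obstacle is making the restriction machinery fully rigorous: that relabeling a component of $\mg|_I$ genuinely yields a component of $\mh_{\gamma_I}$ (so that the induction hypothesis applies at all), and that the two notions at stake — membership in $\SYam$ and the jamming condition — localize correctly to $I_i$. The first point requires checking that the pistol condition controlling $D^\gamma_a$ for colors $a$ inside the window coincides with the pistol condition of $\gamma_I$, i.e.\ that pistols restrict to pistols of the deleted sub-diagram; the second requires that a pair of indices is $\delta$-pistoled inside $I_i$ exactly when the corresponding relabeled pair is $\delta_i$-pistoled. Both are ``locality of pistols'' statements that hold because pistols are intervals in reading order, but since the entire reduction rests on them they must be verified with care.
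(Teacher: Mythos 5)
Your proposal is correct and follows essentially the same route as the paper: the one-row case from signature preservation, the two-row case by induction via Lemma~\ref{yam to yam induction} after restricting to $[n-1]$, and the general case by restricting to the intervals of values that unstandardize to an adjacent pair of letters and invoking the two-row case (the paper phrases this last step as a contradiction with a minimal offending pair $i,i+1$ rather than a direct reassembly over all $i$, but the content is identical). The ``locality of pistols'' verifications you flag at the end are likewise left implicit in the paper's argument.
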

\begin{proof}
Let $s$ be as in the statement of the lemma. In the case where $\lambda$ has exactly one row, $\mg$ consists of a single vertex, and the result follows from the fact that isomorphisms preserve signatures. 

Using the partitions with exactly one row as a base case, we may prove the case where $\lambda$ has at most two rows by induction.  Induct on $\lambda_2\geq 1$, assuming the result for one and two row shapes whose second row is smaller than $\lambda_2$. By our inductive hypothesis, $\phi(s)|_{[n-1]}\in\SYam_{\delta'}(\lambda_1, \lambda_2-1)$, where $\delta'$ is the diagram of $T_\delta(\phi(s))|_{[n-1]}$. The inductive step is completed by Lemma~\ref{yam to yam induction}.

Finally, assume that $\lambda$ has more than two rows. Suppose, for the sake of contradiction, that $\phi(s)\notin\SYam_\delta(\lambda)$. In particular, there are some minimal values $i$ and $i+1$ in $\unst(s)$ such that $P(\unst(s))$ has an $i+1$ weakly below the $i^{th}$ row or that satisfy the definition of jamming. Let $I$ be the interval of integers  in $s$ that are sent to $i$ and $i+1$ in $\unst(s)$.
Then $\st(s|_I)_{\gamma'}\in\SYam(\mu)$, where $\mu$ has exactly two rows and $\gamma^\prime$ is the diagram of $T_\gamma(s)|_I$. Further, $\st(\phi(s)|_I)\notin\SYam_{\delta'}(\lambda)$, where $\delta^\prime$ is the diagram of $T_\delta(\phi(s))|_I$.  

The component of $\st(s|_I)$ in $\mh_{\gamma^\prime}$ is isomorphic to the component of $s$ in $\mh_\gamma|_I$. Similarly, the component of $\st(\phi(s)|_I)$ in $\mh_{\delta^\prime}$ is isomorphic to the component of $\phi(s)$ in $\mh_\delta|_I$. Hence, there exists an isomorphism from the component of $\st(s|_I)$ in $\mh_{\gamma^\prime}$ to the component of $\st(\phi(s)|_I)$ in $\mh_{\delta^\prime}$. Appealing to two row case above provides the desired contradiction. Thus $\phi(s)\in\SYam_\delta(\lambda)$.
%
%
\end{proof}

\begin{proof}[{\bf Proof of Theorem~\ref{yam decomposition of graphs}}]

Recall that $\delta$ is a diagram and that $\mg=(V, \sigma, E)$ is a dual equivalence graph such that $\mg$ is a component of $\mh_{\delta}$. By Theorem~\ref{main}, we may further suppose that $\mg \cong \mg_\lambda$, where $\lambda \vdash n$. We need to show that $|V\cap\SYam_\delta(\lambda)|=1$ and that $|V\cap\SYam_\delta(\mu)|=0$  if $\lambda \neq \mu$.

In order to apply Lemma~\ref{yam to yam}, we first need to show that $\mg_\lambda$ is isomorphic to a component of a well-behaved $\mh_\gamma$.  By conflating standard Young tableaux in $\mg_\lambda$ with their row reading words, we may consider $\mg_\lambda$ as a component $\mc$ of $\mh_{\gamma}$, where $\gamma$ is the subset of the vertical axis $\{(0,i)\in \zz \times \zz\colon 0\leq i< n \}$. Notice that the unique standardized Yamanouchi word in $\mc$ is $\rw(U_\lambda)$ and that $\SYam_{\gamma}(\lambda)=\SYam(\lambda)$. By composing isomorphisms, we may find an isomorphism $\phi\colon \mg \to \mc$. By Lemma~\ref{yam to yam}, $\phi$ acts as a bijection between the set of standardized Yamanouchi words in $\mg$ that do not jam $\delta$ and the set of standardized Yamanouchi words in $\mc$ that do not jam $\gamma$. Thus, there is a unique permutation in $V\cap \SYam_\delta(\lambda)$ corresponding to $\phi^{-1}(\rw(U_\lambda))$. Hence, $|V\cap 
\SYam_\delta(\lambda)|=1$. Since $\SYam(\lambda)$ and $\SYam(\mu)$ are disjoint when $\lambda \neq \mu$, it follows that $V\cap \SYam_\delta(\mu)=\emptyset$ when $\lambda\neq\mu$. 
\end{proof}

\begin{cor}\label{yam jam functions}
If $\mg=(V, \sigma, E)$ is a dual equivalence graph contained in  $\mh_{\delta}$, then
\begin{equation}
\sum_{v\in V} F_{\sigma(v)}(X) = \sum_{\lambda\vdash n} |V\cap \SYam_\delta(\lambda)| \cdot s_\lambda
\end{equation}
\end{cor}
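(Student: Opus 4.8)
The plan is to reduce the identity to the connected components of $\mg$ and then apply Theorem~\ref{yam decomposition of graphs} together with equation (\ref{schurs}) to each one. First I would make precise that $\mg$ being \emph{contained in} $\mh_\delta$ means $\mg$ is a union of connected components of $\mh_\delta$, so that each connected component $\mc$ of $\mg$ is in fact a connected component of $\mh_\delta$. I would then check that each such $\mc$ is again a dual equivalence graph: since $\mg$ satisfies the Locally Standard and Commuting Properties of Definition~\ref{properties}, and any restriction $\mc|_I$ is the full subgraph of $\mg|_I$ supported on the vertices of $\mc$, every component of $\mc|_I$ is a component of $\mg|_I$ and is therefore isomorphic to some $\mg_\mu$; the Commuting Property transfers verbatim. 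By Theorem~\ref{main}, each $\mc$ is thus isomorphic to a unique $\mg_\lambda$, and both Theorem~\ref{main} and Theorem~\ref{yam decomposition of graphs} apply to it.

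Next I would evaluate the left-hand side one component at a time. For a component $\mc\cong\mg_\lambda$, the isomorphism preserves signatures, so (\ref{schurs}) gives $\sum_{v\in\mc}F_{\sigma(v)}(X)=s_\lambda$. Summing over the finitely many components of $\mg$ yields $\sum_{v\in V}F_{\sigma(v)}(X)=\sum_{\mc}s_{\lambda(\mc)}$, where $\lambda(\mc)$ denotes the shape of the standard dual equivalence graph to which $\mc$ is isomorphic.

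The last step is a counting argument that identifies the multiplicity of $s_\lambda$ with $|V\cap\SYam_\delta(\lambda)|$, and this is exactly where Theorem~\ref{yam decomposition of graphs} enters. Applied to a component $\mc\cong\mg_\lambda$ of $\mh_\delta$, that theorem guarantees $\mc$ contains exactly one vertex of $\SYam_\delta(\lambda)$ and no vertex of $\SYam_\delta(\mu)$ for any $\mu\neq\lambda$. Hence a component contributes a single vertex to $V\cap\SYam_\delta(\lambda)$ when its shape is $\lambda$ and nothing otherwise, so $|V\cap\SYam_\delta(\lambda)|$ equals the number of components of $\mg$ of shape $\lambda$. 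Substituting this count into the expression from the previous paragraph and regrouping the sum by $\lambda\vdash n$ gives the claimed identity.

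I do not expect a genuine obstacle, as the corollary is essentially bookkeeping on top of the two main theorems; the only point needing care is the verification that each connected component of $\mg$ is itself a dual equivalence graph, so that Theorems~\ref{main} and \ref{yam decomposition of graphs} are legitimately available, and this rests on the observation that the properties of Definition~\ref{properties} pass to the full subgraphs supported on unions of components.
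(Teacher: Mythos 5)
Your proposal is correct and matches the paper's argument: the paper simply states that the corollary is an immediate consequence of Theorem~\ref{yam decomposition of graphs} and (\ref{schurs}), and your component-by-component bookkeeping is exactly the intended (and natural) way to fill in those details.
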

\begin{proof}
This is an immediate consequence of Theorem~\ref{yam decomposition of graphs} and (\ref{schurs}). 
\end{proof}

\begin{proof}[{\bf Proof of Theorem \ref{Hall Yam classification}}]
Expressing $\Des$ and $\HLdelta$ as in (\ref{P by graph}), the result follows from 
Theorems~\ref{Halls are DEGs}, 
Corollary~\ref{yam jam functions}, and the fact that the $\maj_\delta$ statistic is constant on components of $\mP_\delta$.
\end{proof}

\begin{remark}
\textup{
\begin{enumerate}
\item
For each partition $\lambda$ and diagram $\delta$, there exists a set $Y_\delta(\lambda)$ defined as the intersection of $\SYam_\delta(\lambda)$ with the set of permutations of length $|\delta|$ whose component in $\mh_\delta$ is a dual equivalence graph. That is, if $\mg=(V, \sigma, E)$ is a component of $\mh_{\delta}$, then $|V\cap Y_\delta(\lambda)|=1$ if  $\mg\cong\mg_\lambda$, and $|V\cap S_\delta(\lambda)|=0$ otherwise. Finding a more direct way to generate $Y_\delta(\lambda)$, however, is an open problem.
%
\item
We may use Theorem~\ref{Halls are DEGs} to find two related families of dual equivalence graphs in $\mh_\delta$. Consider the graphs obtained by reversing the values in the vertex sets of $\mP_\delta$ and $\mr_{\gamma,\delta}$, sending $i$ to $|\delta|+1-i$. The set of edge labels is similarly reversed, while each $\sigma_i$ is sent to $-\sigma_{|\delta|-i}$. This is equivalent to multiplying each signature of the `color reversal' by $-1$,  as described in \cite[Cor.~3.8]{DEG+LLT}. Thus, it necessarily sends dual equivalence graphs in $\mh_\delta$ to other dual equivalence graphs in $\mh_\delta$ and sends each associated $s_\lambda$ to $s_{\lambda'}$. We may then apply Corollary~\ref{yam jam functions} to describe the Schur expansions of the related symmetric functions. Combinatorially, the reversing of values described above is an involution that maps the set of fillings that have no inversion pairs or triples onto the set of fillings that have every possible inversion pair and triple. That is, we are restricting our attention to permutations $\pi$ that achieve the maximal $\inv_\delta(\pi)$, denoted $m(\delta)$. The associated symmetric function may also be computed by replacing $s_\lambda$ with $s_{\lambda^\prime}$, and replacing $\maj_\delta$ with $\operatorname{comaj}_\delta$, which is the result of subtracting maj from the maximal value of $\maj_\delta(\pi)$. 
That is,
\[
\Macdelta\big|_{q^{m(\delta)}}
=\sum_{\lambda \vdash |\delta|} \;
\sum_{{w \in \Yam_{\delta}(\lambda)}\atop \inv_\delta(w)=m(\delta) }t^{\maj_\delta(w)} s_{\lambda}
=\sum_{\lambda \vdash |\delta|} \;
\sum_{{w \in \Yam_{\delta}(\lambda)}\atop \inv_\delta(w)=0 }t^{\operatorname{comaj}_\delta(w)} s_{\lambda^\prime}.
\]
\end{enumerate} }
\end{remark}

\subsection{Related Conjectures}

The above analysis lends itself to an interesting conjecture about the Schur expansion of the quasisymmetric function associated with any graph comprised of components of $\mh_{\delta}$.

\begin{conjecture}\label{f tau}
Given any diagram $\delta$ with $n$ cells, there exists an injective function $f_\delta\colon \SYam(n)\hookrightarrow S_n$ fixing $\SYam_\delta(n)$ and preserving $\sigma$ such that for any component $\mc=(V,\sigma,E)$ of $\mh_{\delta}$, 
\begin{equation}
\sum_{v\in V} F_{\sigma(v)}(X)=\sum_{\lambda\vdash n} 
|\{\pi \in \SYam(\lambda)\colon f_\delta(\pi)\in V\}|\cdot s_\lambda
\end{equation}
\end{conjecture}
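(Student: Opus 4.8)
The plan is to reduce the statement to a signature-preserving matching problem, one component of $\mh_\delta$ at a time, and then solve that matching globally. The natural anchor is the vertical-strip diagram $\gamma=\{(0,i):0\le i<n\}$: there every pistol has at most two cells, so $D_i^\gamma=d_i$ and $\mh_\gamma$ is exactly the standard dual equivalence graph on $S_n$. Each component of $\mh_\gamma$ is a Knuth class isomorphic to some $\mg_\lambda$ and contains exactly one standardized Yamanouchi word (the one whose insertion tableau is $U_\lambda$). Thus $f_\gamma=\mathrm{id}$ already verifies the statement for $\gamma$, and the content of the conjecture is to transport this bookkeeping to an arbitrary $\delta$, where components merge and the words must be reshuffled.

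First I would record two identities. Writing $\sum_{v\in V}F_{\sigma(v)}=\sum_\lambda c^{\mc}_\lambda s_\lambda$ for a component $\mc=(V,\sigma,E)$, the fact that $\sum_{\pi\in S_n}F_{\sigma(\pi)}=\sum_\lambda f^\lambda s_\lambda$ (via R-S-K and $\mathrm{iDes}(\pi)=\mathrm{Des}(Q(\pi))$) together with $S_n=\bigsqcup_{\mc}V$ forces $\sum_{\mc}c^{\mc}_\lambda=f^\lambda=|\SYam(\lambda)|$ for every $\lambda$; so the total demand for shape $\lambda$ matches the supply of shape-$\lambda$ standardized Yamanouchi words. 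Second, since the $F_\sigma$ are linearly independent, the equality $\sum_{v\in V}F_{\sigma(v)}=\sum_\lambda c^{\mc}_\lambda\sum_{\pi\in\SYam(\lambda)}F_{\sigma(\pi)}$ is really a multiset identity: $\{\sigma(v):v\in V\}$ equals $\bigsqcup_\lambda$ ($c^{\mc}_\lambda$ copies of $\{\sigma(\pi):\pi\in\SYam(\lambda)\}$), using the bijection $\SYam(\lambda)\leftrightarrow\SYT(\lambda)$. This is exactly the data a signature-preserving injection needs in order to place $c^\mc_\lambda$ shape-$\lambda$ words into $\mc$.

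With these in hand I would build $f_\delta$ as follows. Pre-assign $f_\delta=\mathrm{id}$ on $\SYam_\delta(n)$; by Theorem~\ref{yam decomposition of graphs} a non-jamming word of shape $\lambda$ inside a dual equivalence component $\cong\mg_\lambda$ accounts for the whole coefficient $c^\mc_\lambda=1$, and one checks the non-jamming words that happen to sit in non-DEG components are compatible with those components' signature multisets. It then remains to route the jamming words injectively: for each $\lambda$ there are $|\SYam(\lambda)|-|\SYam_\delta(\lambda)|$ of them, which by the first identity equals the leftover demand $\sum_{\mc}c^\mc_\lambda-|\SYam_\delta(\lambda)|$. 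I would form the bipartite graph whose left vertices are the jamming words and whose right vertices are the as-yet-unused vertices of the components, joining a word to a slot when their signatures agree and the slot's component still has unmet demand for that shape, and apply Hall's theorem; the multiset identity guarantees every marginal is saturated, producing a perfect matching that avoids the fixed non-jamming words.

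The main obstacle is twofold, and I expect the first part to be the genuine difficulty. Before any matching can even be posed one must know that \emph{every} component $\mc$ of $\mh_\delta$, not merely the dual equivalence ones covered by Theorem~\ref{main}, has $\sum_{v\in V}F_{\sigma(v)}$ equal to a nonnegative integer combination of Schur functions; this is a component-level refinement of LLT/Macdonald positivity that the dual equivalence framework does not deliver for non-DEG components, and establishing it (presumably by analyzing the local configurations forbidden in Lemma~\ref{DEG by pattern} and showing each component is nonetheless symmetric) is where the real work lies. Granting component-wise Schur positivity, the second difficulty is making the map \emph{canonical and explicit} rather than merely existent: Hall's theorem yields some $f_\delta$, but simultaneously honoring injectivity, signature preservation, and the requirement that \emph{all} non-jamming words be fixed demands a coherent choice across every component at once, which is precisely the open problem of describing $Y_\delta(\lambda)$ flagged in the remark following Theorem~\ref{Hall Yam classification}.
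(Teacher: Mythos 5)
The statement you are addressing is Conjecture~\ref{f tau}: the paper offers no proof of it (it reports only that $f_\delta$ was constructed in an ad hoc fashion for partition shapes of size at most seven), so there is nothing to compare your argument against, and your proposal does not close the gap either. You identify the crux yourself: the entire construction presupposes that \emph{every} component $\mc=(V,\sigma,E)$ of $\mh_{\delta}$ --- not just the dual equivalence components covered by Theorem~\ref{main} --- has $\sum_{v\in V}F_{\sigma(v)}$ symmetric and Schur positive. That component-level positivity is not a technical preliminary to the conjecture; it is essentially the conjecture, since once one grants nonnegative $c^{\mc}_\lambda$ with $\sum_{\mc}c^{\mc}_\lambda=f^\lambda$ the remaining assignment is routine (see below). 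A second unproved step is your parenthetical ``one checks'' concerning non-jamming words that lie in non-DEG components: fixing $\SYam_\delta(n)$ pointwise forces $c^{\mc}_\lambda\ge|V\cap\SYam_\delta(\lambda)|$ for those components as well, and Theorem~\ref{yam decomposition of graphs} says nothing about components that are not dual equivalence graphs.

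There is also a concrete error in your second ``identity.'' All elements of $\SYam(\lambda)$ share the \emph{same} signature, namely $\sigma(\rw(U_\lambda))$: they form the Knuth class $\{\pi\colon P(\pi)=U_\lambda\}$, and $\sigma(\pi)$ records the inverse descent set $\Descent(\pi^{-1})=\Descent(P(\pi))$, which is constant on a Knuth class. (For instance $\SYam((2,1))=\{312,132\}$, both with signature $+-$, whereas the two tableaux in $\SYT((2,1))$ have signatures $-+$ and $+-$.) Hence $\sum_{\pi\in\SYam(\lambda)}F_{\sigma(\pi)}=f^\lambda\, F_{\sigma(\rw(U_\lambda))}\neq s_\lambda$, and linear independence of the $F_\sigma$ gives that $\{\sigma(v)\colon v\in V\}$ decomposes into $c^{\mc}_\lambda$ copies of $\{\sigma(T)\colon T\in\SYT(\lambda)\}$, not of $\{\sigma(\pi)\colon\pi\in\SYam(\lambda)\}$; the bijection between $\SYam(\lambda)$ and $\SYT(\lambda)$ via recording tableaux does not preserve $\sigma$. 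The silver lining is that this correction makes your Hall's-theorem machinery unnecessary: granting positivity, each component with $c^{\mc}_\lambda>0$ contains at least $c^{\mc}_\lambda$ vertices of signature $\sigma(\rw(U_\lambda))$ (one for each copy of $U_\lambda$ in its tableau-multiset decomposition), the demands attached to distinct shapes involve distinct signatures, and the totals over all components match $f^\lambda$, so slots can be assigned greedily. But none of this bookkeeping touches the genuinely open parts of the problem.
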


\begin{conjecture}[Corollary of Conjecture~\ref{f tau}]\label{f tau cor}
Given any diagram $\delta$ with $n$ cells and function $f_\delta$ as in Conjecture~\ref{f tau},
\[
\Macdelta = \sum_{\lambda\vdash n} \,
\sum_{\pi \in \SYam(\lambda)} q^{\inv_\delta(f_\delta(\pi))}t^{\maj_\delta(f_\delta(\pi))}s_\lambda.
\]
\end{conjecture}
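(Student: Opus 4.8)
The plan is to obtain the corollary as a direct consequence of Conjecture~\ref{f tau}, combined with the fact that $\inv_\delta$ and $\maj_\delta$ are constant on the connected components of $\mh_\delta$. Starting from the expansion of $\Macdelta$ recorded in (\ref{P by graph}), namely $\Macdelta = \sum_{\pi\in S_n} q^{\inv_\delta(\pi)}t^{\maj_\delta(\pi)}F_{\sigma(\pi)}$ over the full vertex set $V=S_n$ of $\mh_\delta$, I would first partition this sum according to the connected components $\mc=(V_{\mc},\sigma,E)$ of $\mh_\delta$, rewriting it as $\sum_{\mc}\sum_{v\in V_{\mc}} q^{\inv_\delta(v)}t^{\maj_\delta(v)}F_{\sigma(v)}$.

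Next I would invoke the observation---established just before Definition~\ref{HL graph}---that $D_i^\delta$ preserves both $\inv_\delta$ and $\maj_\delta$, so these statistics take constant values $\inv_\delta(\mc)$ and $\maj_\delta(\mc)$ on each component $\mc$. The powers of $q$ and $t$ therefore pull out of the inner sum, and applying Conjecture~\ref{f tau} to each component replaces $\sum_{v\in V_{\mc}}F_{\sigma(v)}$ by $\sum_{\lambda\vdash n}|\{\pi\in\SYam(\lambda)\colon f_\delta(\pi)\in V_{\mc}\}|\,s_\lambda$. This produces
\[
\Macdelta=\sum_{\lambda\vdash n}\Big(\sum_{\mc}q^{\inv_\delta(\mc)}t^{\maj_\delta(\mc)}\,|\{\pi\in\SYam(\lambda)\colon f_\delta(\pi)\in V_{\mc}\}|\Big)s_\lambda.
\]

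The final step is to collapse the inner double sum over components and partitions into a single sum over $\pi\in\SYam(\lambda)$. Since $f_\delta$ is injective and the components of $\mh_\delta$ partition $S_n$, each $\pi\in\SYam(\lambda)$ has its image $f_\delta(\pi)$ lying in exactly one component; moreover, by the constancy noted above, the exponents $\inv_\delta(\mc)$ and $\maj_\delta(\mc)$ attached to that component agree with $\inv_\delta(f_\delta(\pi))$ and $\maj_\delta(f_\delta(\pi))$. Hence the bracketed sum equals $\sum_{\pi\in\SYam(\lambda)}q^{\inv_\delta(f_\delta(\pi))}t^{\maj_\delta(f_\delta(\pi))}$, giving the stated formula. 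Since Conjecture~\ref{f tau} is taken as a hypothesis, no genuine difficulty remains; the only point that requires care is this reindexing---using injectivity of $f_\delta$ to avoid double counting across components, and using constancy of the statistics to read the correct $q,t$-weight off of $f_\delta(\pi)$ itself rather than off the abstract component.
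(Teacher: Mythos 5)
Your derivation is correct and is exactly the intended (but unwritten) argument: the paper states this result only as a formal corollary of Conjecture~\ref{f tau}, and the route you take---decomposing the expansion (\ref{P by graph}) of $\Macdelta$ over the connected components of $\mh_\delta$, pulling the $q,t$-weights out of each inner sum by the constancy of $\inv_\delta$ and $\maj_\delta$ on components, applying the conjectured component-wise Schur expansion, and then reindexing by $\pi\in\SYam(\lambda)$---is precisely what that labeling presupposes. The only cosmetic point is that injectivity of $f_\delta$ is not actually needed in the final reindexing step, since the components already partition $S_n$ and you are counting preimages $\pi$ rather than images, so each $\pi$ is counted exactly once regardless.
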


\noindent
Conjecture~\ref{f tau cor} has been explicitly checked when $\delta$ is a partition shape of size at most seven. It should be mentioned, however, that $f_\delta$ was defined in an ad hoc fashion for each new $\delta$.

\section{Further Applications to Symmetric Functions}\label{Applications to Symmetric Functions}

\subsection{More analysis of $\HL$ and $\MacStraight$}\label{further applications}
We can now explicitly answer the question of Garsia mentioned in Section~\ref{intro}. We also provide the analogous result for Macdonald polynomials.

\begin{cor}\label{Hall Yam cor classification}
Given $\mu\vdash n$, the following equality holds if and only if $\mu$ does not contain $(3, 3, 3)$ as a subdiagram.
\begin{align}\label{HL Yam cor equation}
\HL =& \sum_{\lambda \vdash n} \;
\sum_{w \in \Yam(\lambda) \atop \inv_\mu(w)=0} t^{\maj_\mu(w)}s_\lambda.
\end{align}
\end{cor}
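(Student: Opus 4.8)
The plan is to derive the corollary from Theorem~\ref{Hall Yam classification} by isolating its combinatorial content. That theorem gives $\HL = \sum_{\lambda \vdash n}\sum_{w\in\Yam_\mu(\lambda),\,\inv_\mu(w)=0} t^{\maj_\mu(w)} s_\lambda$, and since $\Yam_\mu(\lambda)=\{w\in\Yam(\lambda): w \text{ does not jam } \mu\}$, the difference between the right-hand side of~\eqref{HL Yam cor equation} and $\HL$ is exactly $\sum_{\lambda\vdash n}\bigl(\sum_{w} t^{\maj_\mu(w)}\bigr)s_\lambda$, the inner sum ranging over those $w\in\Yam(\lambda)$ with $\inv_\mu(w)=0$ that \emph{do} jam $\mu$. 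Because the Schur functions are linearly independent and each $t^{\maj_\mu(w)}$ contributes a positive coefficient, this difference vanishes if and only if no Yamanouchi word with $\inv_\mu=0$ jams $\mu$. Thus~\eqref{HL Yam cor equation} reduces to the purely combinatorial claim: such a word exists if and only if $\mu$ contains $(3,3,3)$.

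For the direction $\mu\supseteq(3,3,3)$ (so $\mu_1\ge\mu_2\ge\mu_3\ge 3$) I would exhibit a jamming word directly, generalizing Garsia's example. Fill the bottom-left $3\times 3$ block with rows (top to bottom) $3\,2\,3$, $2\,1\,1$, $1\,1\,1$ and every other cell of $\mu$ with $1$. I then verify three things: the reading word is Yamanouchi of content $(n-4,2,2)$ (the only delicate check being that the two $3$'s and two $2$'s keep the Yamanouchi inequalities valid, with $\#2=\#3$ becoming tight); $\inv_\mu=0$ (for a partition shape the only inversion pairs come from the constant bottom row, and a short check confirms that none of the triples meeting the four non-$1$ cells is an inversion); and it jams $\mu$, because the $1^{\mathrm{st}}$-from-last $2$ at cell $(2,1)$ and the $2^{\mathrm{nd}}$-from-last $3$ at cell $(3,1)$ lie together in the pistol based at $(3,1)$, which has $\mu_3+1\ge 4$ cells.

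For the converse I would assume $\mu_3\le 2$ and show no $\inv_\mu=0$ Yamanouchi word jams $\mu$. The pistol of $\mu$ based in row $r$ has exactly $\mu_r+1$ cells; a jamming pair consists of the $j^{\mathrm{th}}$-from-last $i$ (cell $a$) and the $(j+1)^{\mathrm{th}}$-from-last $i{+}1$ (cell $b$), and the Yamanouchi condition forces $a$ strictly to the right of $b$ in reading order, with at least one further copy of $i{+}1$ strictly right of $b$ and at least one copy of $i$ strictly between $b$ and $a$. Since a pistol is an interval in reading order, the cell(s) between $b$ and $a$ lie in it, so the jamming pistol has at least three cells. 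I then split on the row $r$ of that pistol. For $r=2$ the pistol meets only rows $1,2$, where by Remark~\ref{first 3 rows} only values $1,2$ occur, so $i=1$; writing $k_2$ for the number of $2$'s in the second row and $m$ for the column of the pistol's apex, tracking the from-last indices of the participating $1$ and $2$ forces $k_2\ge\mu_1+1$, contradicting $k_2\le\mu_2\le\mu_1$. For $r\ge 3$ the pistol has $\mu_r+1\le\mu_3+1\le 3$ cells, hence exactly three, so $a,b$ and the single cell between them are consecutive and carry values $i{+}1,i,i$; the required auxiliary copy of $i{+}1$ — which by the structural fact that a value $v$ occurs only in rows $\ge v$ must sit in a lower row — cannot be accommodated without creating a triple reading $(i,i{+}1,i)$, an inversion of type $x\le z<y$, contradicting $\inv_\mu=0$.

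The main obstacle is precisely the $r\ge 3$ (size-three pistol) case of the converse. It is delicate because the three pistol cells $i{+}1,i,i$ do \emph{not} themselves form an inversion, so the contradiction must be extracted from the forced location of the auxiliary $i{+}1$: Yamanouchi demands an $i$ to its right, while the $\inv_\mu=0$ structure immediately below cell $a$ pins the relevant cell to the value $i$, producing the $x\le z<y$ inversion. By contrast, a size-four pistol (which requires $\mu_r\ge 3$ with $r\ge 3$, i.e. $(3,3,3)\subseteq\mu$) leaves \emph{two} cells between $b$ and $a$, which can carry the inversion-free pattern $i,i{+}1$, as in the $(3,3,3)$ construction above. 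To make the argument clean I would first establish the structural lemma that in an $\inv_\mu=0$ Yamanouchi filling of a partition the value $v$ appears only in rows $r\ge v$ (which follows from the no-inversion-triple condition together with the Yamanouchi inequalities, as the small cases confirm); granting it, both the $r=2$ counting argument and the $r\ge 3$ inversion argument become routine.
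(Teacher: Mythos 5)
Your reduction to the combinatorial statement and your construction for the direction $(3,3,3)\subseteq\mu$ are correct and essentially identical to the paper's (the paper uses the same filling, placing $3232$ in the four distinguished cells, and the same pistol based at the first cell of the third row). The gap is in the converse, in your $r\ge 3$ case. You extract from the Yamanouchi condition only that some copy of $i$ lies strictly between $b$ and $a$ in reading order, which gives a jamming pistol at least three cells, and you are then forced into a separate, delicate argument for three-cell pistols. But the Yamanouchi prefix inequalities give more: the number of $(i{+}1)$'s at positions after $a$ is at most the number of $i$'s there, namely $j-1$, while exactly $j$ copies of $i{+}1$ lie after $b$; hence some copy of $i{+}1$ must also lie strictly between $b$ and $a$. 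Together with the forced copy of $i$, at least two distinct cells separate $b$ from $a$, so a jamming pistol has at least four cells. Since every pistol based in row $r\ge 3$ of $\mu$ has $\mu_r+1\le 3$ cells when $(3,3,3)\not\subseteq\mu$, the $r\ge 3$ case closes immediately with no further work. This is exactly the paper's argument (``the index of the $j$th $i$ differs by at least three from the index of the $(j{+}1)$th $i{+}1$, and so cannot share a pistol containing less than four cells'').

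Your substitute argument for the three-cell case does not hold up as written: the assertion that the ``required auxiliary copy of $i{+}1$ must sit in a lower row'' is unjustified --- that auxiliary copy is in fact forced into the single cell between $b$ and $a$, which already carries $i$, and that collision is the whole contradiction, with no inversion triple needed --- and the structural lemma that value $v$ occurs only in rows $\ge v$ is left unproved. Your $r=2$ counting argument (forcing $k_2\ge\mu_1+1$) is correct and fills in a step the paper leaves implicit, though you should derive the ``only $1$'s and $2$'s in the bottom two rows'' structure directly from $\inv_\mu=0$ and the Yamanouchi condition, as the paper does, rather than citing Remark~\ref{first 3 rows}, which describes fillings already assumed not to jam.
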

\begin{proof}
First assume that $(3, 3, 3)$ is a subdiagram of $\mu$. In light of Theorem~\ref{Hall Yam classification}, it suffices to show that there exists $w \in\Yam(\lambda)$ for some $\lambda\vdash |\mu|$ such that $\inv_\mu(w)=0$ and $w$ jams $\mu$. We may explicitly demonstrate the desired $w$ by placing 1's in all cells of $\mu$ except the first cell of the second row and the first three cells of the third row. Now fill the four remaining cells with 3232 in row reading order, as in Figure~\ref{unpaired Yam}, and then define $w$ as the row reading word of this filling. Thus, $\inv_\mu(w)=0$, and the index of the last 2 of $w$ jams the pistol starting at the first cell of the third row from the bottom.

Now suppose that $(3,3,3)$ is not a subdiagram of $\mu$. We need to show that  
\begin{equation}\label{yams equal}
\{w\in \Yam_\mu(\lambda) \colon \inv_\mu(w)=0\} = \{w\in \Yam(\lambda) \colon \inv_\mu(w) =0\}
\end{equation}
 to apply Theorem~\ref{Hall Yam classification}. Let $w \in \Yam(\lambda)$ be the row reading word of a filling $T$ of $\mu$ such that $\inv(T)=0$. Since a Yamanouchi word must end in a 1, and the bottom row of $T$ must be weakly increasing to avoid inversion pairs, the bottom row must be all 1's. Similarly focusing on the construction of Yamanouchi words and the description of inversion triples in Figure~\ref{triple},  it is readily shown that the second row starts with some number of 2's followed by all 1's. Thus, $w$ cannot jam any of the pistols contained in the bottom two rows. Now consider any pistol that ends before the bottom row. Since $\mu$ does not contain $(3,3,3)$, any such pistol has at most three cells. In a Yamanouchi word, the index of the $j^{th}$ $i$ differs by at least three from the index of the $j+1^{th}$ $i+1$, and so cannot share a pistol containing less than four cells. Hence, no pistol that ends before the bottom row can be jammed by an index of $w$. That is, (\ref{yams equal}) holds.
\end{proof}

\begin{figure}[H] 
   \begin{center} 
   \ytableausetup{smalltableaux}
\ytableaushort{11,323,211,1111}
   \end{center}
  \caption{A filling with row reading word $w \in\SYam(\lambda)$ such that $\inv_\mu(w)=0$ and $w$ jams $\mu$.}
 \label{unpaired Yam}
\end{figure}

\begin{prop}\label{Mac Yam classification}
Given $\mu\vdash n$, the following equality holds if and only if $\mu$ does not contain $(4)$ or $(3, 3)$ as a subdiagram.
\begin{equation}\label{Mac Yam equation}
\MacStraight = \sum_{\lambda \vdash n} \;
\sum_{w \in \Yam(\lambda)}  q^{\inv_\mu(w)}t^{\maj_\mu(w)}s_\lambda.
\end{equation}
\end{prop}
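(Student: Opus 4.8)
The plan is to prove the two implications separately, reducing the forward direction to Lemma~\ref{3 cell pistols} and attacking the converse by exhibiting a discrepancy in a single Schur coefficient. The first move is to translate the subdiagram hypothesis into a statement about pistols. For a partition $\mu$, a direct count shows that a pistol whose top cell lies in the bottom row has at most $\mu_1$ cells, while a pistol whose top cell lies in row $i\geq 2$ (rows numbered from the bottom) consists of the cells of row $i$ weakly right of its top cell together with the cells of row $i-1$ weakly left of the cell below it, and so has exactly $\mu_i+1$ cells. Hence the largest pistol of $\mu$ has $\max(\mu_1,\mu_2+1)$ cells, and this is at most $3$ precisely when $\mu_1\leq 3$ and $\mu_2\leq 2$, i.e.\ precisely when $\mu$ contains neither $(4)$ nor $(3,3)$. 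In that case every pistol has at most three cells, so Lemma~\ref{3 cell pistols} applies verbatim and yields (\ref{Mac Yam equation}). This direction is routine.

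For the converse I would argue the contrapositive: assuming $\mu$ contains $(4)$ or $(3,3)$, equivalently that $\mu$ has a pistol with at least four cells, I would produce a partition $\lambda$ for which the right-hand side of (\ref{Mac Yam equation}) disagrees with $\MacStraight$. The cleanest source of a trustworthy true value is the symmetry $\MacStraight=\widetilde H_{\mu'}(X;t,q)$, which holds because $\mu$ is a partition, combined with the fact that the conjugates occurring in the minimal cases are already computable by the results in hand. For $\mu=(4)$ one has $\mu'=(1^4)$, whose pistols all have size at most two, so Lemma~\ref{3 cell pistols} computes $\widetilde H_{(1^4)}(X;t,q)$ exactly; transporting through the symmetry gives that the coefficient of $s_{(2,2)}$ in $\widetilde H_{(4)}(X;q,t)$ is $q^2+q^4$, whereas the right-hand side of (\ref{Mac Yam equation}) contributes $\sum_{w\in\Yam((2,2))}q^{\inv_{(4)}(w)}=q^4+q^3$, the two words being $2211$ and $2121$. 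The mismatch of the $q^2$ and $q^3$ terms establishes failure for $(4)$. The shape $(3,3)$ is handled identically: its conjugate $(2,2,2)$ has all pistols of size at most three, so Lemma~\ref{3 cell pistols} computes $\widetilde H_{(2,2,2)}$, and comparing $\widetilde H_{(3,3)}(X;q,t)=\widetilde H_{(2,2,2)}(X;t,q)$ against the right-hand side of (\ref{Mac Yam equation}) exhibits the discrepancy (consistent with the presence of a jamming word at $\lambda=(2,2,2)$, cf.\ Figure~\ref{jamming}). Moreover the case $\mu\supseteq(3,3,3)$ can be dispatched uniformly: specializing (\ref{Mac Yam equation}) at $q=0$ reduces its right-hand side exactly to the Hall--Littlewood sum of (\ref{HL Yam cor equation}) over $\{w:\inv_\mu(w)=0\}$, which by Corollary~\ref{Hall Yam cor classification} fails to equal $\HL=\MacStraight|_{q=0}$.

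The main obstacle is propagating the failure from the minimal shapes $(4)$, $(3,3)$, $(3,3,3)$ to an arbitrary $\mu$ that merely contains one of them, since the $q=0$ specialization only detects $(3,3,3)$-containment (and, because $(3,3,3)$ is self-conjugate, the $t=0$ specialization detects nothing new), while the conjugate trick only succeeds when $\mu'$ happens to have all pistols at most three, which can fail (e.g.\ $\mu=(4,4,4,4)$). There is no clean monotonicity of $\widetilde H_\mu$ under adding or deleting cells, so the discrepancy must instead be \emph{localized} to the offending size-$4$ pistol. The natural tool is the restriction operation of Definition~\ref{I restriction} applied to $\mh_\mu$, as in the proof of Lemma~\ref{yam to yam}: a pistol with four or more cells forces, inside $\mh_\mu$, either one of the forbidden strict patterns of Lemma~\ref{DEG by pattern}---so that some component is \emph{not} a dual equivalence graph---or a standardized Yamanouchi word that jams $\mu$. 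In either situation Theorem~\ref{yam decomposition of graphs} shows that the count of $\SYam(\lambda)$ words in that component overshoots the true Schur multiplicity there. I expect the delicate step to be upgrading this local overshoot to a genuine numerical inequality for $\widetilde H_\mu$ as a whole, that is, ruling out a coincidental cancellation against the contributions of other components carrying the same power of $q$ and $t$; I would approach this by isolating a minimal such word and examining the extremal $(\inv_\mu,\maj_\mu)$-bidegree at which the offending $s_\lambda$ appears, where no cancelling contribution can arise.
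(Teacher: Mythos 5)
Your forward direction is correct and is exactly the paper's: the largest pistol of a partition $\mu$ has $\max(\mu_1,\mu_2+1)$ cells, so every pistol has at most three cells precisely when $\mu$ contains neither $(4)$ nor $(3,3)$, and Lemma~\ref{3 cell pistols} then gives (\ref{Mac Yam equation}).

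The converse, however, has a genuine gap, and it is the one you name yourself: you only establish failure for the minimal shapes $\mu=(4)$ and $\mu=(3,3)$ (via the conjugation symmetry) and for shapes containing $(3,3,3)$ (via $q=0$ and Corollary~\ref{Hall Yam cor classification}), but the proposition requires failure for \emph{every} $\mu$ containing $(4)$ or $(3,3)$, e.g.\ $\mu=(4,4,4,4)$, where neither device applies. Your proposed repair --- localizing to the offending pistol via restriction and invoking Theorem~\ref{yam decomposition of graphs} --- does not go through as stated: that theorem only speaks about components of $\mh_\mu$ that \emph{are} dual equivalence graphs, whereas the obstruction here lives precisely in components that are not, and the right-hand side of (\ref{Mac Yam equation}) counts words in $\Yam(\lambda)$ rather than $\SYam_\mu(\lambda)$, so there is no ``overshoot'' statement available to quote. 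You would still have to rule out cancellation across all components in a fixed $(q,t)$-bidegree, which you defer. The paper closes this uniformly and much more cheaply: it fixes the bidegree $q^2t^0$ and shows (i) the only Yamanouchi word $w$ with $\maj_\mu(w)=0$ and $\inv_\mu(w)=2$ is the all-ones filling with bottom row $1\cdots1211$, so the right-hand side of (\ref{Mac Yam equation}) contributes exactly $s_{(n-1,1)}$ at $q^2t^0$; and (ii) whenever $\mu$ contains $(4)$ or $(3,3)$, the standardization of $w=1\cdots112132$ gives a filling with $\inv_\mu=2$, $\maj_\mu=0$, and signature $+\cdots++-+-$, whose two minus signs cannot occur in any $F_\sigma$ appearing in $s_{(n-1,1)}$. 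Hence some $q^2t^0s_\lambda$ with $\lambda\neq(n-1,1)$ occurs in $\MacStraight$, and the two sides disagree. If you want to salvage your write-up, replacing your propagation step with this single-bidegree comparison is the way to do it; your computations for $(4)$ and $(3,3)$ are correct but become superfluous.
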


\begin{proof}
If we assume that $\mu$ does not contain $(4)$ or $(3, 3)$ as a subdiagram, the result is given by Lemma~\ref{3 cell pistols}. It then suffices to assume that $\mu$ contains (4) or (3, 3) and show that Equation (\ref{Mac Yam equation}) does not hold. 

We proceed by considering  the coefficients of $q^2t^0$. Focusing on the right hand side of Equation (\ref{Mac Yam equation}), consider a Yamanouchi word $w$ such that $\inv_\mu(w)=2$ and $\maj_\mu(w)=0$. In particular, $\maj_\mu(w)=0$ forces the columns of $T_\mu(w)$ to be weakly increasing when read downward. 
The bottom row of $T_\mu(w)$ must contribute at most two inversions, so $w$ must end in 1\ldots 111, 1\ldots 121, or 1\ldots 1211. That is, the bottom row can have at most one 2.
Applying the fact that all columns are weakly increasing, all but one column must be all 1's. 

We now consider each of these possibilities for the bottom row separately.
If the bottom row is all 1's, then every column must be all 1's, and so there cannot be any inversions. If the bottom row is $1\ldots 121$ or $1\ldots 1211$ and there is an $x > 1$ above the 2 in the bottom row, then there are at least two inversion triples containing $x$ in the first case and at least one in the second case. Both situations force $\inv_\mu(w)>2$. Hence, any values greater than one must occur in the bottom row. Because $\inv_\mu(w)=2$, we are left with only the filling containing all 1's except for a bottom row filled by $1\ldots 1211$. The conclusion of this analysis is that

\begin{equation}
\sum_{\lambda \vdash n} \;
\sum_{w \in \Yam(\lambda)}  q^{\inv_\mu(w)}t^{\maj_\mu(w)}s_\lambda\big|_{q^2t^0}= s_{(n-1,1)}.
\end{equation}

Now consider the coefficient of $q^2t^0$ in $\MacStraight$, as described in (\ref{Macdonalds}), 
 when $\mu$ contains $(4)$ or $(3, 3)$.  By letting $\pi$ be  the standardization of  $w = 1\dots 1\,1\,2\,1\,3\,2$, we have $\maj_\mu(\pi)=0$ and $\inv_\mu(\pi)=2$ (see Figure~\ref{NonYam}). Thus, $q^2t^0F_{\sigma(\pi)}$ has a positive coefficient in the expansion of $\MacStraight$ into fundamental quasisymmetric functions. However, $F_{\sigma(\pi)}$ has coefficient 0 in the expansion of $s_{(n-1,1)}$. This is clear because $\sigma(\pi)= +\cdots++-+-$, but all fundamental quasisymmetric functions that contribute to $s_{(n-1,1)}$ have exactly one minus sign. Hence, there must be some term of $\MacStraight$ of the form $q^2t^0s_\lambda$, where $\lambda \neq (n-1, 1)$. 
Therefore, (\ref{Mac Yam equation}) cannot hold if $\mu$ contains (4) or (3,3) as a subdiagram, completing our proof.
\end{proof}
%
\begin{figure}[H] 
   \begin{center} 
   \ytableausetup{smalltableaux}
\ytableaushort{11,11, 2132}
\hspace{.4in}
\ytableaushort{11,112,132}
   \end{center}
  \caption{Fillings by $w=1\ldots112132$ with $\inv_\mu(w)=2$ and $\maj_\mu(w)=0$.}
 \label{NonYam}
\end{figure}

\subsection{Further Analysis of $\Des$}

In this section we give a method for quickly finding the leading term of the expansion of $\Des$. We begin by using the relationship between $\gamma$ and $\delta$ to give a characterization of when $\Des=0$. To do so, we will need the following definition.

\begin{dfn}\label{realizable}
\textup{
Given any diagrams $\gamma$ and $\delta$ such that $\gamma \subset \delta$, we refer to $\gamma$ as a \emph{realizable descent set} of $\delta$ if the following hold.
\begin{enumerate}
\item If $(x,y)\in \gamma$, then $(x,y-1)\in \delta$.
\item If  $x_1<x_2$ are any integers and $I$ is any integer interval such that $(x_1,\max(I)), (x_1,\min(I))\notin \gamma$, and $(x_1, I\setminus \{\min(I), \max(I)\})\subset \gamma$,  then $(x_2, I\setminus\{\min(I)\})\not\subset \gamma$.
\end{enumerate}
}
\end{dfn}

Examples of Definition~\ref{realizable} can be found in Figure~\ref{realizable figure}. This definition was chosen to aid in the statement of the following proposition.

\begin{figure}[H]
   \begin{center} 
   \ytableausetup{smalltableaux, aligntableaux=bottom}
\ytableaushort{{}\bullet,{}{},} \hspace{.6in}
\ytableaushort{\none\bullet, \bullet\bullet,{}{}} \hspace{.6in}
\ytableaushort{{}\bullet, \bullet\bullet,\bullet\bullet,{}{}} \hspace{.6in}
\ytableaushort{{}\bullet, \bullet\bullet,\bullet{},{}{}} \hspace{.6in}
\ytableaushort{{}\bullet, \bullet{},\bullet\bullet,{}{}} \hspace{.6in}
\ytableaushort{\none{}, \bullet\bullet,\bullet\bullet,{}{}}
 \end{center}
\caption{Diagrams with bullets representing $\gamma$ and boxes representing $\delta$. From the left, three examples where $\gamma$ is not a realizable descent set of $\delta$, then three examples where $\gamma$ is a realizable descent set of $\delta$.}\label{realizable figure}
\end{figure}

\begin{prop}\label{Des zero}
Given any two diagrams $\gamma$ and $\delta$ such that $\gamma\subset\delta$, then $\Des\neq 0$ if and only if $\gamma$ is a realizable descent set of $\delta$.
\end{prop}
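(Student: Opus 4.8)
The plan is to first reduce the statement to a combinatorial existence question. Since $\Des = \sum_T F_{\sigma(T)}$ summed over $T \in \SF(\delta)$ with $\inv(T)=0$ and $\Descent(T)=\gamma$, and since distinct fundamental quasisymmetric functions are linearly independent and appear with nonnegative coefficients, $\Des \neq 0$ if and only if at least one such filling exists. So I would prove: an inversion-free standard filling of $\delta$ with descent set exactly $\gamma$ exists if and only if $\gamma$ is a realizable descent set of $\delta$. As a preliminary I would isolate a short lemma on inv-free fillings: for a cell $c$ with cell $e$ directly below it, if $c$ is a non-descent then every cell $d$ to its right in the same row satisfies $T(c)<T(d)<T(e)$, while if $c$ is a descent then $T(d)\notin(T(e),T(c))$; and if $c$ has no cell below, then $T(c)<T(d)$ for every such $d$. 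These all follow by unwinding the counterclockwise orientation in the definition of inversion triples and pairs.

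For the forward direction (existence $\Rightarrow$ realizable), Condition (1) of Definition~\ref{realizable} is immediate, since a cell is a descent only when it has a cell directly below it in $\delta$. Condition (2) I would establish by contradiction. Suppose an inv-free $T$ has $\Descent(T)=\gamma$ but Condition (2) fails for some $x_1<x_2$ and interval $I=\{m,\dots,M\}$. Writing $a_y=T(x_1,y)$ and $b_y=T(x_2,y)$, the prescribed descents force $a_m<a_{m+1}<\cdots<a_{M-1}$ with $a_M<a_{M-1}$ in column $x_1$, and $b_m<b_{m+1}<\cdots<b_M$ in column $x_2$. The condition $(x_1,M)\notin\gamma$ — read via the non-inversion triple at $(x_1,M)$ if that cell lies in $\delta$, or via the non-inversion pair at $(x_1,M-1)$ and $(x_2,M)$ if it does not — gives $b_M<a_{M-1}$. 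The interior descents at $(x_1,y)$ then force, by downward induction on $y$ using the preliminary lemma, the relation $b_y<a_{y-1}$, and in particular $b_m<b_{m+1}<a_m$. But the non-descent at $(x_1,m)$ forces $a_m<b_m$, a contradiction. This chain is clean and handles the $|I|=2$ case as well.

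For the reverse direction (realizable $\Rightarrow$ existence), I would build an inv-free filling with descent set exactly $\gamma$ by induction on $|\delta|$, placing the largest value $n$ in a suitably extremal cell $c^\ast$ and recursing on $\delta\setminus\{c^\ast\}$. The cell $c^\ast$ must be chosen so that the cell above it (if present) is not in $\gamma$, so that $c^\ast\in\gamma$ precisely when it has a cell below (reproducing the prescribed descent status), and so that no non-descent-with-cell-below and no missing-cell-below configuration lies to its left in its row (the two situations that would make $n$ the larger entry of an inversion). With such a choice, every remaining inversion condition involving $c^\ast$ degrades into a condition among the smaller entries that is exactly an inversion condition of $\delta\setminus\{c^\ast\}$ — for instance, the cells to the right of $c^\ast$ acquire the ``absent-cell'' pair constraint once $c^\ast$ is removed — and so are handled by the inductive filling.

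The main obstacle is this reverse direction: I expect the real work to be (a) proving that realizability guarantees the existence of a legal extremal cell $c^\ast$, and (b) proving that $\gamma$ restricted to $\delta\setminus\{c^\ast\}$ is again realizable, so that the induction closes. Both should be driven by Condition (2) of Definition~\ref{realizable}, since it is precisely the hypothesis ruling out the single chain-of-inequalities obstruction uncovered in the forward direction; but translating the somewhat opaque interval condition into a clean statement about where $n$ may legally be placed, and checking that this property is inherited by the smaller diagram, is where the bookkeeping will be delicate. If the induction proves awkward, an alternative is to define a canonical filling directly — ordering cells by the height of the peak of their ascending column-run, breaking ties by column — and verify inv-freeness and the descent set using the same local inequalities, with the verification again resting entirely on Condition (2).
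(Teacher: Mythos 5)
Your reduction to the existence of an inversion-free standard filling of $\delta$ with descent set exactly $\gamma$ is sound, and your forward direction is essentially the paper's argument: the paper likewise fixes a rectangle $\{x_1,x_2\}\times I$ violating Part~2 of Definition~\ref{realizable}, labels its entries, and runs the same chain of inequalities down the two columns, ending with the bottom-left entry exceeding the bottom-right entry and contradicting the non-descent at $(x_1,\min I)$. Your preliminary lemma unwinding the three counterclockwise cases is correct, and the downward induction giving $b_y<a_{y-1}$ is a slightly cleaner packaging of the paper's chain $a\le b\le c$, $c>d$, $e\ge d>f$, and so on. So that half is fine.

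The genuine gap is the reverse direction, and you have located it yourself. Your induction on $|\delta|$ requires exactly two claims --- that realizability of $\gamma$ guarantees a legal extremal cell $c^\ast$ for the value $n$, and that $\gamma\cap(\delta\setminus\{c^\ast\})$ is again a realizable descent set of $\delta\setminus\{c^\ast\}$ --- and you prove neither; since these are precisely the points where Condition~(2) must enter, the implication ``realizable $\Rightarrow$ $\Des\neq 0$'' is asserted rather than established. The paper avoids the induction entirely by exhibiting one explicit witness: the leading Yamanouchi word $w_{\gamma,\delta}$ of Definition~\ref{leading Yamanouchi}, obtained by placing $1$ in every cell of $\delta\setminus\gamma$ and, in each cell of $\gamma$, one plus the value immediately below. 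Proposition~\ref{Des leading term} then verifies directly that the resulting filling has descent set $\gamma$ and no inversions: if $c$ is a descent its value exceeds the value below by exactly one, so no third value fits strictly between them; if $c$ is not a descent its value is $1$, and Condition~(2) forces the descent run descending from the cell $e$ below $c$ to be at least as long as the run descending from any $d$ to the right of $c$, whence $T(d)\le T(e)$ and no inversion occurs. Your one-sentence fallback (a canonical filling determined by position within the column descent run) is essentially this construction; that, rather than the placement-of-$n$ induction, is the route to develop if you want a complete proof.
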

The proof of Proposition~\ref{Des zero} is postponed until the end of this section.

\begin{dfn}\label{leading Yamanouchi}
\textup{Given diagrams $\gamma$ and $\delta$ such that $\gamma$ is a realizable descent set of $\delta$, define the \emph{leading Yamanouchi word} of $\Des$, denoted $w_{\gamma, \delta}$, as the row reading word of the filling of $\delta$ achieved by placing a 1 in every cell of $\delta\setminus \gamma$ and then placing values in the rows of $\gamma$ from bottom to top, filling each cell with one plus the value in the cell immediately below it in $\delta$.}
\end{dfn}

\noindent
See Figure~\ref{leading Yam} for an example of Definition~\ref{leading Yamanouchi}. Notice that $w_{\gamma, \delta}$ is indeed a Yamanouchi word. We can then use $w_{\gamma,\delta}$ to provide the leading term in the expansion of $\Des$ into Schur functions.

\begin{figure}
\begin{center}
\ytableaushort{{}\bullet{}, \bullet{}\none,\bullet\bullet{},{}{}{}} \hspace{.6in}
\ytableaushort{121, 31\none,221,111}
\end{center}
\caption{At left, a diagram $\delta$ with bullets in cells of a realizable subset $\gamma$. At right, a filling whose row reading word is the leading Yamanouchi word $w_{\gamma, \delta}=12131221111$.}\label{leading Yam}
\end{figure}

\begin{prop}\label{Des leading term}
Given diagrams $\gamma$ and $\delta$ such that $\gamma$ is a realizable descent set of $\delta$, let $\Des=\sum_\lambda c_\lambda s_\lambda$ for some nonzero integers $c_\lambda$, and let $w_{\gamma, \delta}\in \Yam(\mu)$, then 
\begin{enumerate}
\item $c_\lambda=1$ if $\lambda=\mu$,
\item $c_\lambda = 0$ if $\lambda>\mu$ in lexicographic order.
\end{enumerate}
\end{prop}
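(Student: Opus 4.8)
The plan is to read the coefficients $c_\lambda$ directly off the Schur expansion of $\Des$ supplied by Theorem~\ref{Hall Yam classification}, which gives
\[
c_\lambda=\Bigl|\{\,w\in\Yam_\delta(\lambda):\ \inv_\delta(w)=0,\ \Descent_\delta(w)=\gamma\,\}\Bigr|.
\]
Writing $T_{\gamma,\delta}=T_\delta(w_{\gamma,\delta})$ for the \emph{leading filling}, the whole statement reduces to two facts: (i) $w_{\gamma,\delta}$ is itself one of the words counted above, so $c_\mu\ge 1$; and (ii) a rigidity statement: every $w\in\Yam(\lambda)$ whose filling $T_\delta(w)$ has $\Descent_\delta=\gamma$ satisfies $\lambda\le_{\mathrm{lex}}\mu$, with equality forcing $T_\delta(w)=T_{\gamma,\delta}$. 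Granting these, the conclusion is immediate: no counted word can have content $\lambda>_{\mathrm{lex}}\mu$, so $c_\lambda=0$ there, while the only filling able to realize content $\mu$ is $T_{\gamma,\delta}$, which by (i) is counted, whence $c_\mu=1$.

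The heart of the argument is (ii), and I would prove it by a pointwise comparison on the cells of $\gamma$. Fix such a $T$ with content the partition $\lambda$ (value $i$ occurring $\lambda_i$ times, since $w$ is Yamanouchi). Every $c\in\gamma$ is a descent, so $T(c)$ strictly exceeds the entry of the cell directly below it (which exists by condition (1) of Definition~\ref{realizable}); hence $T(c)\ge 2$, all $1$'s of $T$ lie in $\delta\setminus\gamma$, and $\lambda_1\le|\delta\setminus\gamma|=\mu_1$. If $\lambda_1<\mu_1$ then $\lambda<_{\mathrm{lex}}\mu$ and we are done. If $\lambda_1=\mu_1$, then every cell of $\delta\setminus\gamma$ carries a $1$, exactly as in $T_{\gamma,\delta}$; inducting up each column from the bottommost $\gamma$-cell, using only $T(c)\ge T(\text{cell below})+1$ (with equality defining $T_{\gamma,\delta}$), gives $T(c)\ge T_{\gamma,\delta}(c)$ for every $c\in\gamma$, hence $T\ge T_{\gamma,\delta}$ pointwise on all of $\delta$. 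This yields $\bigl|\{c:T(c)\le k\}\bigr|\le\bigl|\{c:T_{\gamma,\delta}(c)\le k\}\bigr|$, i.e.\ $\sum_{j\le k}\lambda_j\le\sum_{j\le k}\mu_j$ for all $k$, so $\lambda\trianglelefteq\mu$ in dominance order, and dominance implies $\lambda\le_{\mathrm{lex}}\mu$. If moreover $\lambda=\mu$, then $T$ and $T_{\gamma,\delta}$ have equal content and hence equal total entry sum; combined with the pointwise bound $T\ge T_{\gamma,\delta}$ this forces $T=T_{\gamma,\delta}$, giving the uniqueness behind $c_\mu=1$.

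It remains to verify (i): that $w_{\gamma,\delta}$ is Yamanouchi (noted after Definition~\ref{leading Yamanouchi}), has $\Descent_\delta(w_{\gamma,\delta})=\gamma$, satisfies $\inv_\delta(w_{\gamma,\delta})=0$, and does not jam $\delta$. The descent-set claim is immediate from the construction together with condition (1) of Definition~\ref{realizable}: each $\gamma$-cell genuinely sits above a smaller entry, while each cell of $\delta\setminus\gamma$ holds the minimal value $1$ and so is not a descent. I expect the main obstacle to be the vanishing of $\inv_\delta$ and the non-jamming of $w_{\gamma,\delta}$: these are exactly where condition (2) of Definition~\ref{realizable} must be used, and they amount to the same computation that exhibits the witness in Proposition~\ref{Des zero}. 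Concretely I would check that in $T_{\gamma,\delta}$ no triple $c,d,e$ satisfies any of the three counterclockwise inversion inequalities and no surviving pair is an inversion pair, and that no index carrying the $j$th-from-last $i$ is $\delta$-pistoled with the index of the $(j+1)$th-from-last $i+1$; in each case a configuration violating one of these would exhibit in $\gamma$ precisely an interval pattern of the form forbidden by realizability condition (2) (an upper row whose interior interval lies in $\gamma$ but whose endpoints do not, sitting against a lower row wholly in $\gamma$). Establishing this dictionary between failures of $\inv_\delta=0$/non-jamming for $T_{\gamma,\delta}$ and violations of condition (2) is the one genuinely technical point; once it is in place, (i) and (ii) combine to give both parts of the proposition.
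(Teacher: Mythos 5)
Your overall strategy is the same as the paper's: exhibit $T_\delta(w_{\gamma,\delta})$ as the pointwise-minimal filling with descent set $\gamma$, conclude that no counted word can have content lexicographically above $\mu$ and that $w_{\gamma,\delta}$ is the only candidate at content $\mu$, and then check that $w_{\gamma,\delta}$ itself is counted by Theorem~\ref{Hall Yam classification}. Your rigidity step (ii) --- all $1$'s forced into $\delta\setminus\gamma$, the column-by-column bound $T\ge T_{\gamma,\delta}$, and the passage from the pointwise bound through dominance order to lexicographic order --- is correct, and is in fact a more careful rendering of the paper's one-sentence assertion that the entries of $T_{\gamma,\delta}$ are ``as small as possible''; it rightly needs neither $\inv_\delta=0$ nor non-jamming, since those only shrink the set of counted words.

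The gap is in step (i), which you explicitly defer. Two verifications are needed there and neither is carried out. First, $\inv_\delta(w_{\gamma,\delta})=0$: this is the place where condition (2) of Definition~\ref{realizable} genuinely enters (for a triple $c,d,e$ with $c\notin\gamma$ one must show $T(d)\le T(e)$, i.e.\ that the run of consecutive descents below $e$ is at least as long as the run below $d$, and condition (2) is exactly what excludes the offending two-column configuration); the paper does this in detail, while your proposal only gestures at it. Second, non-jamming: you are right to flag that this must be checked (the paper itself passes over it silently), but the dictionary you propose --- that a jamming configuration in $T_{\gamma,\delta}$ would exhibit a violation of condition (2) --- is not the mechanism that actually works. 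Non-jamming holds for a structural reason independent of condition (2): in $T_{\gamma,\delta}$ every $i+1$ sits directly above an $i$, and this correspondence is injective and preserves reading order. Hence, if there are $p$ copies of $i+1$ and $q\ge p$ copies of $i$, the $i$ lying below the $m$-th copy of $i+1$ in reading order is at most the $(q-p+m)$-th copy of $i$; since any pistol containing a cell $a$ reaches no further in reading order than the position directly below $a$, the $(p-m)$-th-from-last $i$ (which is the $(q-p+m+1)$-th copy) lies strictly beyond every pistol through the $(m+1)$-th-from-last $i+1$, so no jamming index exists. So the proposition is true and your outline is salvageable, but as written the proof of (i) is incomplete, and the jamming half would likely not close along the route you suggest.
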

\begin{proof}
Let   $w_{\gamma, \delta}$ be as described in the proposition. We begin by showing that $T=T_\delta(w_{\gamma,\delta})$ has $\Descent(T)=\gamma$ and $\inv(T)=0$. The fact that $\Descent(T)=\gamma$ is a result of the definition of $w_{\gamma,\delta}$. We thus need to show that $\inv(T)=0$. 

First consider a triple of $T$ consisting of cells $c$, $d$, and $e$, in row reading order. Notice that if $c$ is a descent of $T$, then the value in $c$ is one greater than the value in $e$. It then follows that cells $c$, $d$, and $e$ cannot form an inversion triple, regardless of the value in $d$. Now suppose that $c$ is not a descent of $T$. Then the value in $c$ is a 1, so it suffices to show that the value in $d$ is weakly less than the value in $e$. If the value in $d$ is 1, we are done. We may then assume that $d$ is a descent. By the definition of $w_{\gamma, \delta}$, it follows that the value in $d$ is one greater than the number of consecutive descents of $T$  occurring in the cells weakly below $d$. Appealing to the definition of realizable descent set, there must be at least as many consecutive descents of $T$  starting at $e$ and continuing down. Thus, the value in $d$ is weakly less than the value in $e$. That is, the cells $c$, $d$, and $e$ do not form an inversion triple.


Next consider two cells $c$ and $d$, in row reading order, that could form an inversion pair. If $d$ is in the row below $c$, then the same reasoning as above shows that $d$ must be weakly greater than $c$. If $c$ and $d$ are in the same row, then the cell below $c$ cannot be in $\delta$. Thus $c$ is not a descent of $T$ and so has value 1 in $T$. In particular, $c$ and $d$ do not form an inversion pair. Therefore, $\inv(T)=0$.

Notice that the values in each cell of $T$ are as small as possible while respecting $\Descent(T)=\gamma$. Thus, there cannot be any other filling with  row reading word $w \in\SYam(\lambda)$, $\inv_\delta(w)=0$, and $\Descent_\delta(w)=\gamma$, where $\lambda \geq \mu$ in lexicographic order. Appealing to Theorem~\ref{Hall Yam classification} completes the proof.
\end{proof}



\begin{proof}[{\bf Proof of Proposition~\ref{Des zero}}]
We wish to show that $\gamma$ is a realizable descent set of $\delta$ if and only if $\Des\neq0$. First assume that $\gamma$ is not a realizable descent set of $\delta$. Notice that if $\gamma$ does not satisfy Part~1 of Definition~\ref{realizable}, then there are no fillings of $\delta$ with descent set $\gamma$. Thus, $\Des=0$. Next,
suppose that $T$ is a filling of $\delta$ with $\Descent(T)=\gamma$ such that $\gamma$ satisfies Part~1 but not Part~2 of Definition~\ref{realizable}. It suffices to show that $\inv(T)\neq 0$. We suppose, for the sake of contradiction, that $\inv(T)= 0$.

Choose $\{x_1,x_2\}\times I$ violating Part~2 of Definition~\ref{realizable}. 
Label the values of $T$ in this rectangle by $a, b, c$, and so on, in row reading order. Here, we will call the upper left value $a$, whether or not the value exists in $T$. Regardless of existence in $\delta$, the cell containing $a$ is not in $\gamma$. See Figure~\ref{realizable rectangle} for an illustration.
  
 Because $a$ is not in a descent of $T$, and $T$ has no inversion triples or inversion pairs, it follows that $a\leq b \leq c$. Here, we have $b<c$ if $a$ does not exist. Also, $b>d$ because $b$ is in a descent of $T$, and so $c>d$. Assuming that $c$ is a descent of $T$, it then follows that $c>e\geq d>f$, since $T$ has no inversions and $d$ is in a descent of $T$. In particular, notice that $e>f$. Continuing in this fashion recursively, it follows that the value in the bottom left corner is greater than the value in the bottom right corner. However, the bottom left corner is not a descent of $T$, and so the fact that its value is greater than the value to its right guarantees an inversion triple or inversion pair in $T$. Hence, $\inv(T)\neq 0$, as desired. Therefore, $\Des=0$ whenever $\gamma$ is not a realizable descent set of $\delta$.

\begin{figure}[H]
   \begin{center} 
   \ytableausetup{smalltableaux, aligntableaux=center}
\ytableaushort{{}\bullet, \bullet\bullet,\bullet\bullet,{}\circ} \hspace{.6in}
\ytableaushort{ab,cd,ef,gh}
 \end{center}
\caption{At left, a rectangle violating Part~2 of Definition~\ref{realizable} with $\gamma$ represented by bullets and possibly the open circle. At right, the values of $T$ in this rectangle.}\label{realizable rectangle}
\end{figure}

We still need to consider the case where $\gamma$ is a realizable descent set of $\delta$ and show that $\Des\neq 0$ in this case. By Proposition~\ref{Des leading term}, the Schur expansion of $\Des$ has a nonzero term, and so $\Des \neq 0$, completing the proof.
\end{proof}


\subsection*{Acknowledgements}

We would like to thank Adriano Garsia, both for the motivation to work on this problem, as well as his continued conversation while formulating this paper. We also wish to thank Avinash Dalal, James Haglund, Angela Hicks,  Emily Leven, Monty McGovern, Isabella Novik, Jennifer Morse, Brendan Pawlowski, and Mike Zabrocki for inspiring conversations on this topic. Particular thanks belong to Sara Billey for her invaluable guidance and editorial help throughout the process of creating this paper. Finally, thank you to the anonymous referees for their time and insight towards improving this paper.

\bibliographystyle{abbrvnat}
\bibliography{BIBDatabase}

\end{document}